\theoremstyle{plain}
\newtheorem{thm}{Theorem}
\newtheorem{lem}{Lemma}
\newtheorem{prop}{Proposition}
\newtheorem{remark}{Remark}
\newtheorem{cla}{Claim}
\renewcommand{\arraystretch}{1.3}
\renewcommand{\nomgroup}[1]{%
\ifthenelse{\equal{#1}{A}}{\item[\emph{\textbf{Abbreviations}}]}{%
\ifthenelse{\equal{#1}{B}}{\item[\emph{\textbf{Sets and Indices}}]}{%
\ifthenelse{\equal{#1}{C}}{\item[\emph{\textbf{Parameters}}]}{%
\ifthenelse{\equal{#1}{D}}{\item[\emph{\textbf{Decision Variables}}]}{%
\ifthenelse{\equal{#1}{E}}{\item[\emph{\textbf{XX}}]}
				}
			}
		}
	}
}
\begin{document}
\title{Stochastic-Robust Planning of \\Networked Hydrogen-Electrical Microgrids:\\ A Study on Induced Refueling Demand}

\author{Xunhang~Sun,~\IEEEmembership{Graduate~Student~Member,~IEEE},
        Xiaoyu~Cao,~\IEEEmembership{Member,~IEEE},
        Bo~Zeng,~\IEEEmembership{Member,~IEEE},
        Qiaozhu~Zhai,~\IEEEmembership{Member,~IEEE},
        Tamer~Ba\c{s}ar,~\IEEEmembership{Life~Fellow,~IEEE},
        and~Xiaohong~Guan,~\IEEEmembership{Life~Fellow,~IEEE}
\thanks{This work was partially supported by the National Key Research and Development Program of China under Grant 2022YFA1004600, and by the National Natural Science Foundation of China under Grant 62373294, Grant 62192752, and Grant 62103321. \emph{(Corresponding author: Xiaoyu Cao.)}}%
\thanks{Xunhang Sun is with the School of Automation Science and Engineering and the Ministry of Education Key Laboratory for Intelligent Networks and Network Security, Xi’an Jiaotong University, Xi’an 710049, Shaanxi, China, and also with the Coordinated Science Laboratory, University of Illinois Urbana-Champaign, Urbana, IL 61801 USA (e-mail: xhsun@sei.xjtu.edu.cn).}%
\thanks{Xiaoyu Cao and Qiaozhu Zhai are with the School of Automation Science and Engineering and the Ministry of Education Key Laboratory for Intelligent Networks and Network Security, Xi’an Jiaotong University, Xi’an 710049, Shaanxi, China, and also with the Smart Integrated Energy Department, Sichuan Digital Economy Industry Development Research Institute, Chengdu 610037, Sichuan, China (e-mail: cxykeven2019@xjtu.edu.cn; qzzhai@sei.xjtu.edu.cn).}%
\thanks{Bo Zeng is with the Department of Industrial Engineering and the Department of Electrical and Computer Engineering, University of Pittsburgh, Pittsburgh, PA 15106 USA (e-mail: bzeng@pitt.edu).}%
\thanks{Tamer Ba\c{s}ar is with the Coordinated Science Laboratory, University of Illinois Urbana-Champaign, Urbana, IL 61801 USA (e-mail: basar1@illinois.edu).}%
\thanks{Xiaohong Guan is with the School of Automation Science and Engineering and the Ministry of Education Key Laboratory for Intelligent Networks and Network Security, Xi’an Jiaotong University, Xi’an 710049, Shaanxi, China, and also with the Center for Intelligent and Networked Systems, Department of Automation, Tsinghua University, Beijing 100084, China (e-mail: xhguan@xjtu.edu.cn).}%
}


\maketitle

\begin{abstract}
Hydrogen-electrical microgrids are increasingly assuming an important role on the pathway toward decarbonization of energy and transportation systems. This paper studies networked hydrogen-electrical microgrids planning (NHEMP), considering a critical but often-overlooked issue, i.e., the demand-inducing effect (DIE) associated with infrastructure development decisions. Specifically, higher refueling capacities will attract more refueling demand of hydrogen-powered vehicles (HVs). To capture such interactions between investment decisions and induced refueling demand, we introduce a decision-dependent uncertainty (DDU) set and build a trilevel stochastic-robust formulation. The upper-level determines optimal investment strategies for hydrogen-electrical microgrids, the lower-level optimizes the risk-aware operation schedules across a series of stochastic scenarios, and, for each scenario, the middle-level identifies the ``worst'' situation of refueling demand within an individual DDU set to ensure economic feasibility. Then, an adaptive and exact decomposition algorithm, based on Parametric Column-and-Constraint Generation (PC\&CG), is customized and developed to address the computational challenge and to quantitatively analyze the impact of DIE. Case studies on an IEEE exemplary system validate the effectiveness of the proposed NHEMP model and the PC\&CG algorithm. It is worth highlighting that DIE can make an important contribution to the economic benefits of NHEMP, yet its significance will gradually decrease when the main bottleneck transits to other system restrictions.
\end{abstract}

\begin{IEEEkeywords}
Microgrids planning, demand-inducing effect, decision-dependent uncertainty, stochastic-robust optimization, parametric column-and-constraint generation, hydrogen-electricity synergy
\end{IEEEkeywords}

\IEEEpeerreviewmaketitle

\printnomenclature

\nomenclature[A]{HV}{Hydrogen-powered vehicle}%
\nomenclature[A]{NHEMP}{Networked hydrogen-electrical microgrids planning}%
\nomenclature[A]{DDU/DIU}{Decision-dependent/-independent uncertainty}%
\nomenclature[A]{PC\&CG}{Parametric column-and-constraint generation}%
\nomenclature[A]{BC\&CG}{Benders column-and-constraint generation}%
\nomenclature[A]{C\&CG}{Column-and-constraint generation}%
\nomenclature[A]{HD}{Hydrogen dispenser}%
\nomenclature[A]{DIE}{Demand-inducing effect}%
\nomenclature[A]{RES}{Renewable energy source}%
\nomenclature[A]{ESS}{Electrical storage system}%
\nomenclature[A]{HSS}{Hydrogen storage system}%
\nomenclature[A]{CapEx}{Capital expenditure}%
\nomenclature[A]{OPEX}{Operational expense}%
\nomenclature[A]{ASE}{Annualized system expense}%
\nomenclature[A]{PDN}{Power distribution network}%
\nomenclature[A]{WT}{Wind turbine}%
\nomenclature[A]{PV}{Photovoltaic}%
\nomenclature[A]{ELZ}{Electrolyzer}%
\nomenclature[A]{HT}{Hydrogen tank}%
\nomenclature[A]{BB}{Battery bank}%
\nomenclature[A]{LP}{Linear program}%
\nomenclature[A]{MIP}{Mixed-integer program}%
\nomenclature[A]{BS}{Basic solution}%
\nomenclature[A]{BOS}{Basic optimal solution}%

\nomenclature[B]{$\Lambda$}{Candidate node set for hydrogen-electrical microgrids}%
\nomenclature[B]{$\mathcal{S}/s$}{Set/index of stochastic scenarios}%
\nomenclature[B]{$\mathcal{Z}/z$}{Set/index of refueling zones}%
\nomenclature[B]{$\mathcal{T}/t$}{Set/index of operating periods}
\nomenclature[B]{$(s,t)$}{Time index of operating period $t$ in scenario $s$}
\nomenclature[B]{$\mathcal{L}/(i,j)$}{Set/index of distribution lines}%
\nomenclature[B]{$\mathcal{N}/i,j,l$}{Set/index of nodes}%
\nomenclature[B]{$k$}{Index of candidate facilities}%
\nomenclature[B]{$\Omega_{\rm res}$}{Set of RES}%
\nomenclature[B]{$\Omega_{\rm hss}$}{Set of HSS}%
\nomenclature[B]{$\Omega_{\rm ess}$}{Set of ESS}%
\nomenclature[B]{$\mathcal{C}(i)$}{Set of children nodes of node $i$}%

\nomenclature[C]{$\Delta_t$}{Time interval of operating period}%
\nomenclature[C]{$\sigma$}{Scale factor from one typical day to one year}%
\nomenclature[C]{$c^{\rm i\&m}_{\rm hem}/c^{\rm i\&m}_{k}$}{Annualized summation of investment cost and maintenance cost of hydrogen-electrical microgrid/component $k$}%
\nomenclature[C]{$\iota_{\rm pl}$}{Penalty prices for unmet electric loads}%
\nomenclature[C]{$x_{k,i}^{\rm{max}}/x_{k,i}^{\rm{min}}$}{Max-/minimun number of component $k$ allowed to install at node $i$}%
\nomenclature[C]{$n_{i}^{\rm{max}}/n_{i}^{\rm{min}}$}{Max-/minimun number of HDs allowed to install at node $i$}%
\nomenclature[C]{$\overline{P}_{k}$}{Unit capacity of component $k$}%
\nomenclature[C]{$\overline{E}_{k}$}{Unit energy capacity of component $k$ in ESSs}%
\nomenclature[C]{$P_{k,{\rm ch}}^{\rm{max}}/P_{k,{\rm dis}}^{\rm{max}}$}{Maximum charge/discharge power of ESS $k$}%
\nomenclature[C]{$DoD_k$}{Maximum depth of discharge of ESS $k$}%
\nomenclature[C]{$\eta^{\rm ch}_k/\eta^{\rm dis}_k$}{Charge/discharge efficiency of ESS $k$}%
\nomenclature[C]{$\eta_{\rm elz}$}{Efficiency of electrolyzers}%
\nomenclature[C]{$LHV_{\rm H_2}$}{Low heating value of hydrogen}%
\nomenclature[C]{$\kappa_{k}$}{Degradation price of ESS $k$}%
\nomenclature[C]{$U_i^{\rm{max}}/U_i^{\rm{min}}$}{Voltage magnitude upper/lower bound at node $i$}%
\nomenclature[C]{$S_{\rm mv}^{\rm{max}}/S_{{\rm lv},i}^{\rm{max}}$}{Capacity limit of medium/low-voltage substation}%
\nomenclature[C]{$r_{ij}/x_{ij}$}{Resistance/reactance of line $(i,j)$}%
\nomenclature[C]{$PD_{i}^{s,t}$}{Active electric loads at node $i$ at $(s,t)$}%
\nomenclature[C]{$\delta_{k,i}^{s,t}$}{Capacity time-varing factor of RES $k$ restricted by natural resources at node $i$ at $(s,t)$}%
\nomenclature[C]{$\varphi_{k}^{\rm{max}}/\varphi_{k}^{\rm{min}}$}{Max-/minimum power factor angle of RES $k$}%
\nomenclature[C]{$\varphi_{{\rm pl},i}^{s,t}$}{Power factor angle of electric loads at node $i$ at $(s,t)$}%
\nomenclature[C]{$\varrho_{\rm imp}^{t}/\varrho_{\rm g}$}{Utility prices for power/hydrogen procurement}%
\nomenclature[C]{$\rho_{\rm e}^{t}/\rho_{\rm g}$}{Retail electricity/hydrogen prices}%
\nomenclature[C]{$SR$}{Service rate of HD}%
\nomenclature[C]{$\phi_{\rm ht}$}{Dissipation factor of hydrogen tanks}%
\nomenclature[C]{$\pi_{s}$}{Probability of scenario $s$}%
\nomenclature[C]{$\overline{G}_{{\rm pur},z}^{t}$}{Hydrogen purchase limitation of zone $z$ at $t$}%
\nomenclature[C]{$\xi_{z}^{s,t}$}{Refueling demand of HVs at zone $z$ at $(s,t)$}%

\nomenclature[D]{$x_{k,i}$}{Number of component $k$ at node $i$}%
\nomenclature[D]{$n_{i}$}{Number of HDs at node $i$}%
\nomenclature[D]{$u_{i}$}{Binary, 1 if a hydrogen-electrical microgrid is deployed at node $i$, and 0 otherwise}%
\nomenclature[D]{$p_{k,i}^{s,t}/q_{k,i}^{s,t}$}{Active/reactive power output of RES $k\in\Omega_{\rm res}$ at node $i$ at $(s,t)$}%
\nomenclature[D]{$pc_{k,i}^{s,t}/pd_{k,i}^{s,t}$}{Charging/discharging power of ESS $k$ at node $i$ at $(s,t)$}%
\nomenclature[D]{$p_{{\rm elz},i}^{s,t}/g_{{\rm elz},i}^{s,t}$}{Power input/hydrogen outflow of electrolyzers at node $i$ at $(s,t)$}%
\nomenclature[D]{$loh_{{\rm ht},i}^{s,t}$}{Level of hydrogen of hydrogen tanks at node $i$ at $(s,t)$}%
\nomenclature[D]{$soc_{k,i}^{s,t}$}{State-of-charge of ESS $k$ at node $i$ at $(s,t)$}%
\nomenclature[D]{$U_i^{s,t}$}{Magnitude of voltage at node $i$ at $(s,t)$}%
\nomenclature[D]{$fp_{ij}^{s,t}/fq_{ij}^{s,t}$}{Active/reactive power flow on line $(i,j)$ at $(s,t)$}%
\nomenclature[D]{$p_{\rm mv}^{s,t}/q_{\rm mv}^{s,t}$}{Active/reactive power via medium-voltage substation at $(s,t)$}%
\nomenclature[D]{$p_{{\rm lv},i}^{s,t}/q_{{\rm lv},i}^{s,t}$}{Active/reactive power via low-voltage substation at node $i$ at $(s,t)$}%
\nomenclature[D]{$pl_{i}^{s,t}/ls_{i}^{s,t}$}{Met/unmet electric loads at node $i$ at $(s,t)$}%
\nomenclature[D]{$gl_{i}^{s,t}/ul_{z}^{s,t}$}{Met/unmet HV refueling demand at node $i$/zone $z$ at $(s,t)$}%
\nomenclature[D]{$g_{{\rm pur},i}^{s,t}$}{Hydrogen purchased from market at node $i$ at $(s,t)$}%

\section{Introduction}
\subsection{Background and Literature Review}
\IEEEPARstart{O}{nly} by reaching global carbon neutrality in the middle of the 21st century can it be possible to control global warming within 1.5°C, thereby averting the extreme hazards caused by climate change \cite{ipcc1,wang2023accelerating,ZHANG2023113745}. To achieve carbon neutrality, it is imperative that the transportation sector, responsible for approximately 23\% of social carbon emissions, undergoes a profound transformation towards the low-carbon paradigm \cite{iea3}. Compared to carbon-intensive fossil-fuel vehicles, the hydrogen-powered vehicles (HVs), e.g., cars, vans, buses, and trucks that use hydrogen as fuel, are garnering increasing public attention, owing to their advantages of zero carbon emissions \cite{9755957,4168013}. With the integration of HVs into the transportation system, it has advanced the infrastructure for hydrogen production, storage, transportation, and refueling \cite{iea2-1}.  Also, some related regulations and standards have been developed \cite{iea2-1}.

However, the rapid proliferation of HVs raises enormous challenges to the energy sector for hydrogen supply. On the one hand, it is anticipated that there will be a substantial surge in hydrogen refueling demand. According to International Energy Agency, the global demand for hydrogen in road transport is projected to increase by 123 times in 2030 compared to 2022 \cite{iea2-1}. In light of such huge increase, the existing hydrogen supply capacity falls short. On the other hand, traditionally, hydrogen is mainly produced via centralized fossil-based ways, e.g., steam methane reforming and coal gasification, and then stored and delivered to end-users through transportation and distribution networks \cite{sinigaglia2017production,nikolaidis2017comparative,ABDALLA2018602}. Although this procedure is profitable, these hydrogen production measures could come with a poor environmental impact attribute to inherent carbon emissions \cite{mingolla2024effects}, which hinders progress towards achieving carbon neutrality. 

To support carbon-free hydrogen supply, it would be a viable approach to seek for dispersed generation of renewable energy sources (RESs) over the power distribution network (PDN), which is the critical infrastructure coupled with the urban transportation system. The interdependence facilities of RES generation, on-site hydrogen production and storage, as well as HVs' refueling can constitute the hydrogen-electrical microgrid for decarbonizing both the energy and transportation sectors \cite{hydro11}. In particular, hydrogen-electrical microgrids exploit the ``otherwise-curtailed'' renewable electricity generated from wind and solar energy installations to produce hydrogen to supply HVs by means of the power-to-hydrogen technology, i.e., water electrolysis \cite{WANG2024114779,shao2023risk}. It is noteworthy to mention that such refueling procedure for HVs is carbon-free. On the other hand, the RES outputs could be effectively harnessed to support the flexible operation of PDN \cite{chen2024risk,10354055}. Also, by incorporating the electrical and hydrogen storage systems, a clustering of HE microgrids could actively participate in demand response programs, thereby enhancing their economic benefits \cite{hydro2}.

Electrolytic production of hydrogen, however, suffers from high financial costs \cite{hydro7,pan2024feasibility,UECKERDT2024104}. As mentioned in Ref. \cite{nikolaidis2017comparative}, the hydrogen production cost of renewable electrolysis is 2.45--11.19 and 3.81--17.37 times higher than traditional steam methane reforming and coal gasification, respectively. Hence, a proper deployment of hydrogen-electrical microgrids is crucial for meeting increasing HV refueling demand and enhancing economic efficiency.

The primary challenge of hydrogen-electrical microgrids planning is the uncertain operational risks embedded in both the supply side (e.g., intermittent RES outputs) and the demand side (e.g., stochastic HV refueling demand and electric loads). How to make effective siting and sizing decisions for hydrogen-electrical microgrids within highly uncertain environments is an important line of current research. Based on the statistical characteristics of uncertain risks, Ref. \cite{hydro11} has proposed a two-stage stochastic programming method for hydrogen-electrical microgrids planning, using scenarios that capture the uncertainties. Further, Ref. \cite{9999551} has presented a multistage stochastic extension for systems' dynamic evolution, considering both the strategic and operating uncertainties under multiple timescales. Besides, Refs. \cite{8972566,wang2020robust} have exploited uncertainty sets to handle the uncertainties associated with hydrogen-electrical microgrids operation, and then proposed planning methods based on robust optimization. Moreover, Refs. \cite{yang2024distributionally,10538062} have introduced the distributionally robust optimization in the planning of hydrogen-electrical microgrids. Of note, in these existing works, the operational risks are generally modeled as \textit{decision-independent uncertainties (DIUs)}, or exogenous uncertainties, i.e., they are not affected by the investment decision choice \cite{hellemo2018decision}. More specifically, the DIU factors are often assumed to demonstrate a fixed (static) nature of randomness, e.g., taking values according to a pre-defined stochastic distribution (in stochastic programming) \cite{hydro11,9999551}, an uncertainty set (in robust optimization) \cite{8972566,wang2020robust}, or a set of distributions (in distributionally robust optimization) \cite{yang2024distributionally,10538062}.

Practically, it is often the case that more constructions and capacities attract higher traffic or demand, a phenomenon referred to as \textit{demand-inducing effect (DIE)} \cite{NOLAND20021,lee1999induced}. In fact, DIE is quite relevant in planning and building transportation infrastructure. Ref. \cite{MAHMUTOGULLARI2023173} has investigated the refueling station location problem, showing that without a DIE consideration, the planning scheme might result in a potential reduction of up to 17.07\% in the serviced traffic flows. Ref. \cite{li2017market} has mentioned that a 10\% increase in the number of public charging stations would increase the sales of electric vehicles by about 8\%. In hydrogen-electrical microgrids planning, it is also anticipated that the DIE, i.e., HV refueling demand that would be induced by hydrogen refueling capacities, would also be strong. Nevertheless, it is of note that the existing literature regarding the planning of hydrogen-electrical microgrds lacks attention on DIE, and there remains a dearth of a systematic study on induced refueling demand. 

The traditional DIU-based model cannot capture the system endogeneity (dynamics) associated with the DIE for HVs. Fortunately, the \textit{decision-dependent uncertainty (DDU)}, also known as endogenous uncertainty, i.e., the uncertain factor that is substantially affected by the siting and sizing decision choices \cite{zeng2022two,doi:10.1137/17M1110560,doi:10.1137/22M1502082}, fits better into the description of refueling demand in this situation. Note that the planning schemes of hydrogen-electrical microgrids without a DDU consideration are inadequate in handling the uneven demand distribution incurred by DIE, which would reduce the economic benefits. Therefore, in this study, we investigate the networked hydrogen-electrical microgrids planning (NHEMP) problem with the DDU modeling of HV refueling demand, to facilitate the low-carbon and economic development of the synergistic energy-transportation system.

It is worth highlighting that the inclusion of DDU revolutionarily changes the decision-making paradigm. In the traditional DIU-based optimization, the description of uncertainties is static. The decision-maker directly makes her optimal decision based on a fixed model of the DIU factors, e.g., a pre-defined probability distribution, an uncertainty set, or a set of distributions. While in the DDU setting, she has to further take into consideration the change of the uncertainties caused by her decisions, leading to \textit{mutual interactions between decisions and uncertain factors}. How to make sound decisions efficiently under such a complex DDU circumstance is another important concern addressed in our work.

\subsection{Contributions and Paper Structure}
This paper presents a stochastic-robust planning method for networked hydrogen-electrical microgrids, taking DIE into consideration. To ensure the economic feasibility of NHEMP with respect to the uncertain future HV refueling demand, we adopt a trilevel $\min-\max-\min$ formulation that inherently embodies the idea of robust optimization. The upper-level problem determines the siting and sizing strategies for hydrogen-electrical microgrids with minimum annual capital expenditures (CapEx's) and maintenance costs. The middle-level identifies the “worst” situation of refueling demand by adopting DDU sets. The DDU sets can capture the influence of investment decisions to refueling demand, which analytically reflects the DIE. Then, in the lower-level problem, the multi-period operational schedule of hydrogen-electrical microgrids associating with HV refueling actions is optimized to evaluate the profitability of the deployment scheme. Besides the DDU description, some other regular uncertain factors, i.e., the random variation of RES outputs and electric loads, are also considered in our NHEMP model by using a series of stochastic scenarios. In a summarized form, the decision framework is depicted in Fig. \ref{framework}. 

\begin{figure}[]
	\centering
	\includegraphics[scale=0.44]{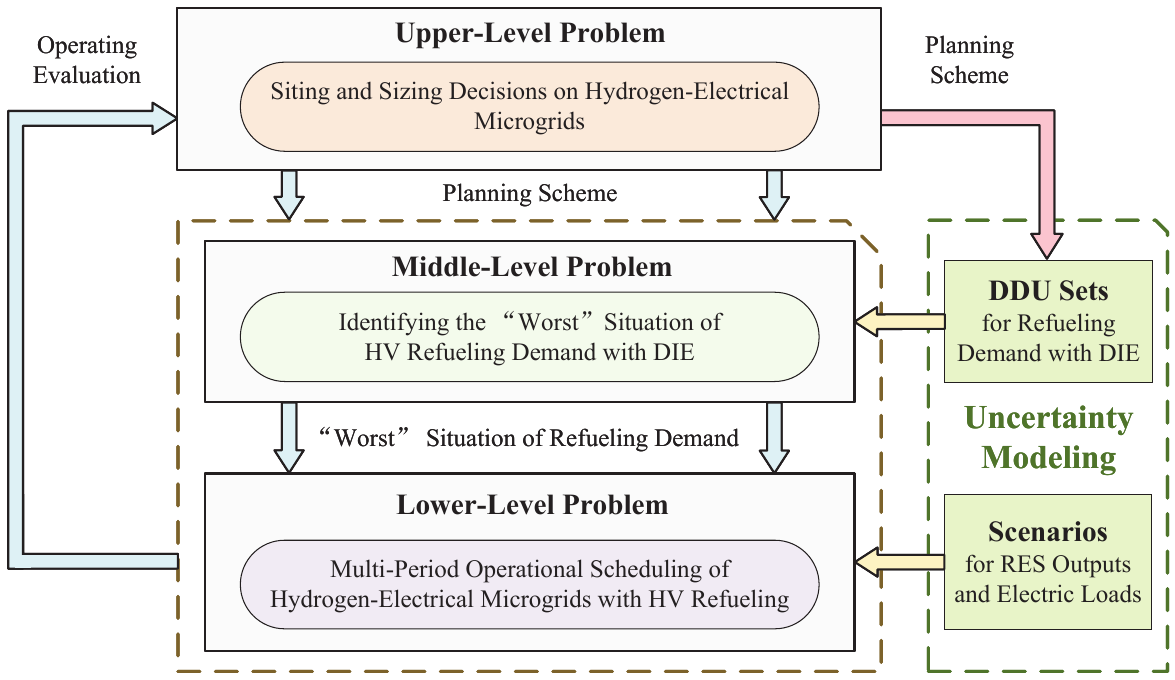}
	\caption{Framework of the trilevel stochastic-robust NHEMP problem.}
	\label{framework}
\end{figure}

The resulting NHEMP formulation has \textit{mutual interactions between investment strategies (upper-level decisions) and refueling demand (environment parameters in the middle-level)}. The popular Column-and-Constraint Generation (C\&CG) algorithm \cite{zeng2013solving} for classical DIU-based $\min-\max-\min$ problems cannot handle it because the DDU sets dynamically vary with the upper-level decisions. To address the computational intractability, we analyze the differences between the DDU and DIU sets, and derive a set of structural properties. Following them, an adaptive decomposition algorithm based on Parametric C\&CG (PC\&CG) is tailored and developed, which demonstrates a superior and exact solution capacity.

In comparison to existing literature, the contributions of this paper can be summarized as below:
\begin{enumerate}
	\item The often-overlooked DIE is taken into consideration in the context of hydrogen-electrical microgrids planning, and the corresponding refueling demand is captured by a DDU set. Note that DIE, if exists, cannot be ignored as it can generate a critical impact in system planning. Also, our modeling scheme provides a rather general tool to investigate DIE in other practical systems. 
	\item A trilevel stochastic-robust formulation is developed to ascertain the profitability of NHEMP, where the refueling demand is modeled by DDU sets, while the other DIU factors (i.e., intermittent RES outputs and invariant electric loads) are represented by a series of scenarios.
	\item To overcome the computational challenges associated with the stochastic-robust program with DDU, a PC\&CG-based adaptive decomposition algorithm is customized and implemented that guarantees to generate an exact solution.
	\item Numerical results demonstrate the value of DIE, the flexibility of DDU modeling, as well as the strong solution capacity of the customized PC\&CG-based adaptive decomposition algorithm.
\end{enumerate}

The remainder of this paper is organized as follows. Section \ref{formulation} proposes a trilevel stochastic-robust formulation of NHEMP considering DDU of refueling demand. Section \ref{stochastic} customizes a solution method based on PC\&CG. Then, we carry out case studies in Section \ref{case} to substantiate the effectiveness of our NHEMP model and customized PC\&CG algorithm. Some key findings are highlighted and discussed in Section \ref{discussion}. Finally, in Section \ref{conclusion}, conclusions are drawn, and several interesting directions are pointed out for future research.

\section{Problem Formulation}\label{formulation}
The goal of the NHEMP problem is to conduct optimal deployment of hydrogen-electrical microgrids (as shown in Fig. \ref{fig1}) within the complex operation environment. Particularly, the DIE of HVs are taken into consideration. Our NHEMP model features a trilevel structure with embedded DDUs, as presented in Fig. \ref{framework}.

\begin{figure}[]
	\centering
	\subfloat[Carbon-free town.]{
		\label{fig1-1}
		\includegraphics[width=0.48\textwidth]{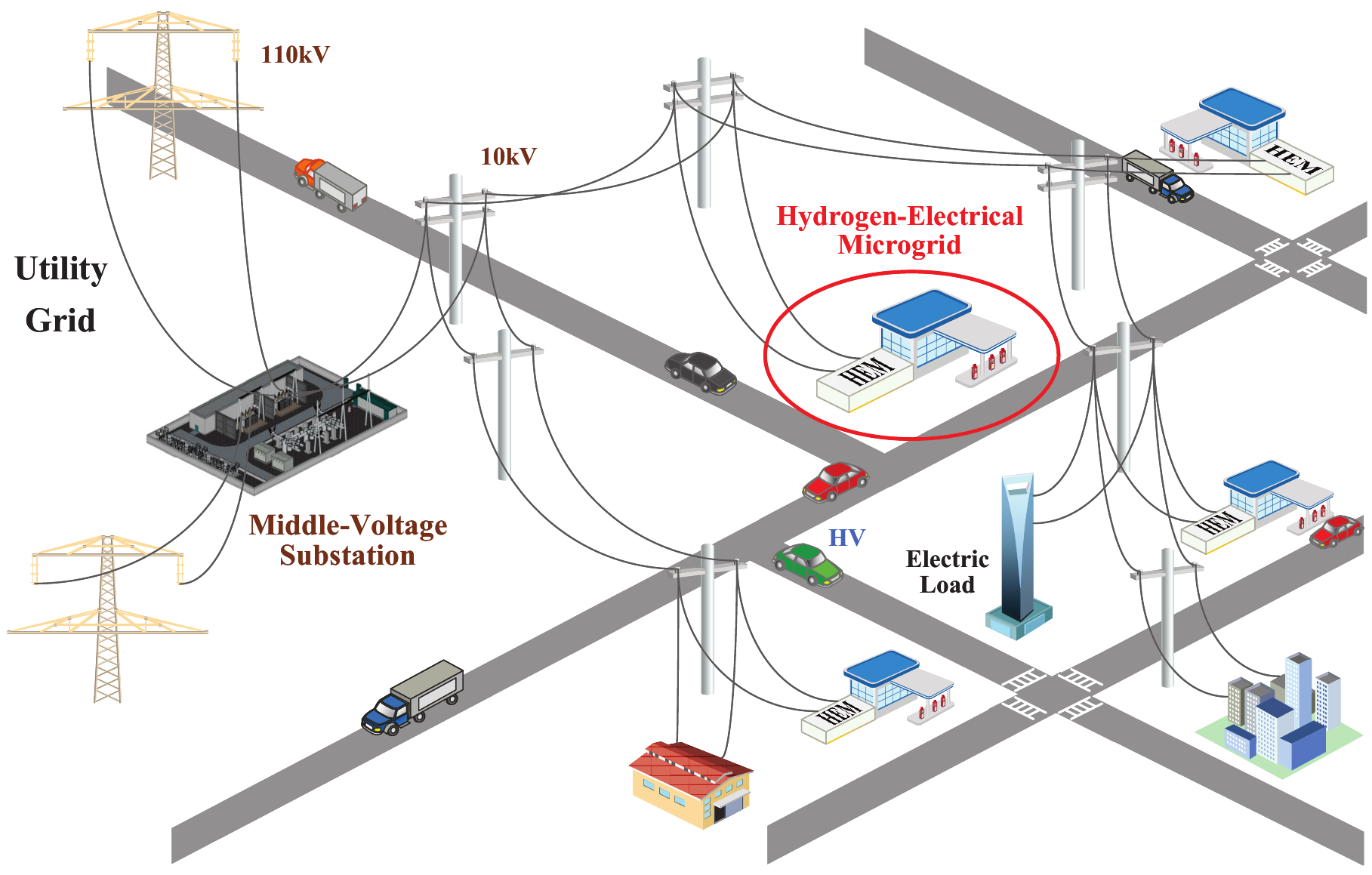}
	}\\
	\subfloat[Hydrogen-electrical microgrid.]{
		\label{fig1-2}
		\includegraphics[width=0.48\textwidth]{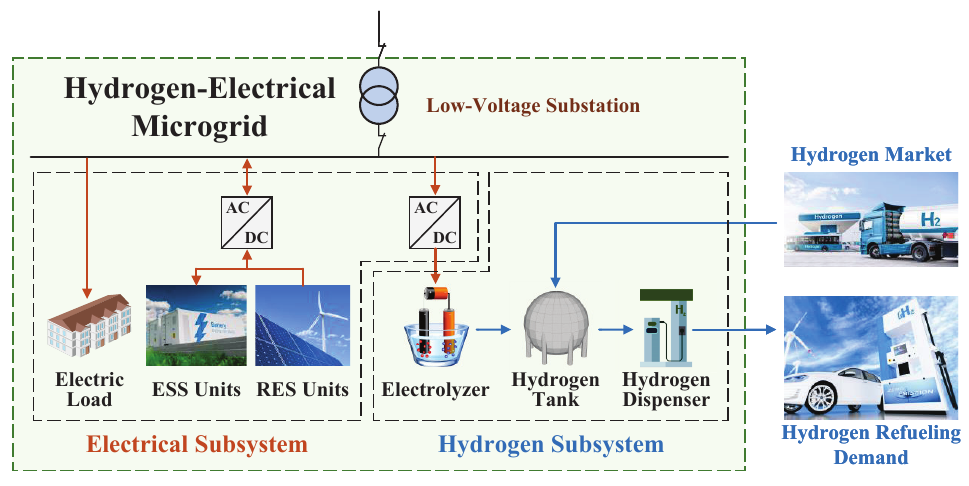}
	}
	\caption{Typical structure of carbon-free town with networked hydrogen-electrical microgrids.}
	\label{fig1}
\end{figure}

The PDN holds a radial topology, where $\mathcal{N}$ and $\mathcal{L}$ denote the sets of distribution nodes and lines, respectively. The medium-voltage substation, connected to the utility grid, is located at node 0. Due to geographical and architectural restrictions, only nodes in $\Lambda\subseteq\mathcal{N}^{+}=\{1,2,\cdots\}$ will be the candidates for possible deployment of hydrogen-electrical microgrids. We simplify the transportation system model. The town is partitioned into several mutually disjoint zones for local HV refueling. It is stipulated that HVs within each zone can only access one microgrid affiliated with the same zone for hydrogen refueling. Let $\Lambda_z$ denote the set of candidate nodes that can be deployed with hydrogen-electrical microgrids in zone $z\in\mathcal{Z}$.

\subsection{Trilevel Stochastic-Robust NHEMP Problem with DDU}
NHEMP considering DIE of HVs can be formulated as the following trilevel $\min-\max-\min$ problem with DDU in \eqref{Objective Function:1}--\eqref{branches}. Its target is to minimize the annualized system expenses (ASEs) under the ``worst-case'' refueling situation of HVs by making the optimal investment strategy and operating schedule for hydrogen-electrical microgrids, so as to verify the economic feasibility of the deployment scheme. 
\setlength{\arraycolsep}{-0em}
\begin{align}
	\nonumber\min~&\sum\limits_{i \in \Lambda } \left(c^{\rm i\& m}_{\rm hem} u_{i}+\sum\limits_{\substack{k \in \Omega_{\rm res}\cup\\ \Omega_{\rm ess}\cup\Omega_{\rm hss}}} c^{\rm i\& m}_{k} \overline{P}_{k}x_{k,i}+c^{\rm i\& m}_{\rm hd} n_{i}\right)\\
	&+\sum_{s\in\mathcal{S}}\pi_s\max_{\xi_{z}^{s,t}\in\Xi(n_{i})} \min~ Q_s(x_{k,i},n_{i},\xi_{z}^{s,t})\label{Objective Function:1}\\
	{\rm s.t.}~&\sum\limits_{i\in\Lambda_z }u_i=1,\quad \forall z\in\mathcal{Z} \label{cons2-1}\\
	\nonumber &x_{k,i}^{\rm{min}}u_i\leq x_{k,i}\leq x_{k,i}^{\rm{max}}u_i,\\
	&\quad\;\;\qquad\qquad\quad\quad \forall k\in\Omega_{\rm res}\cup\Omega_{\rm ess}\cup\Omega_{\rm hss},\forall i\in\Lambda \label{cons2-2}\\
	&n_{i}^{\rm{min}}u_i\leq n_{i}\leq n_{i}^{\rm{max}}u_i,\quad \forall i\in\Lambda\label{cons2-2-1}\\
	& \eqref{cons3-1}-\eqref{cons3-35},\quad \forall s\in\mathcal{S}\label{branches}
\end{align}

\subsubsection{Upper-Level Problem}
By choosing the optimal siting and sizing decisions for hydrogen-electrical microgrids, the objective of the upper-level problem is to minimize the annualized summation of CapEx's as well as the basic maintenance costs (Line 1 of \eqref{Objective Function:1}). To satisfy the hydrogen supply to the local area, and for the sake of simplicity, we assume that each zone is restricted to deploy only one hydrogen-electrical microgrid, as indicated by \eqref{cons2-1}. Besides, \eqref{cons2-2} and \eqref{cons2-2-1} show the capacity constraints for the hydrogen-electrical microgrids. Taking \eqref{cons2-2} for example, when $u_i=1$, $x_{k,i}\in[x_{k,i}^{\min},x_{k,i}^{\max}]$; otherwise, when $u_i=0$, $x_{k,i}=0$.

\subsubsection{Lower-Level Problem}
The lower-level problem contributes to obtaining a whole picture of the hydrogen-electrical microgrids' performance under varying operating conditions to assess the feasibility, reliablity, profitability, and flexiblity of the investment strategy. A set of \textit{stochastic scenarios} ($s\in\mathcal{S}$) \cite{stochastic1}, including random variation of RES outputs ($\delta_{k,i}^{s,t}$) and uncertain electric loads ($PD_i^{s,t}$), are introduced to describe the operational risks of hydrogen-electrical microgrids. The HVs' refueling demand ($\xi_z^{s,t}$) is excluded from the stochastic scenario, which will be illustrated in the middle-level problem.

Each scenario $s\in\mathcal{S}$ corresponds to a branch of the lower-level problem. The objective of branch $s$ is to minimize the operational expenses (OPEXs), i.e., power and hydrogen procurement costs, penalty costs associated with unmet electric demand, and degradation costs of ESSs, less the electrical and hydrogen revenues, as in \eqref{Objective Function:3}.
\setlength{\arraycolsep}{-0.3em}
\begin{eqnarray}
		&&\nonumber Q_s(x_{k,i},n_{i},\xi_{z}^{s,t})=\sigma \sum\limits_{t \in \mathcal{T}}\left(\varrho_{\rm imp}^{t} p_{\rm mv}^{s,t}+\varrho_{\rm g}\sum_{i\in\Lambda}g_{{\rm pur},i}^{s,t}\right)\Delta_t\\
		&&\nonumber\;\;+\sigma \sum\limits_{t \in \mathcal{T}}\Bigg[\iota_{\rm pl}\!\!\sum_{i \in \mathcal{N^+} }\!ls_i^{s,t}\!+\!\frac{1}{2}\!\sum_{i\in\Lambda}\!\sum_{k\in\Omega_{\rm ess}}\kappa_{k}\bigg(pc_{k,i}^{s,t}\!+\!pd_{k,i}^{s,t} \bigg) \Bigg] \Delta_t\\
		&&\;\;-\sigma\sum_{t\in\mathcal{T}}\Bigg(\rho^{t}_{\rm e}\sum_{i\in\mathcal{N}^+} pl_i^{s,t}+\rho_{\rm g}\sum_{i\in\Lambda}  gl_{i}^{s,t} \Bigg)\Delta_t\label{Objective Function:3}
\end{eqnarray}
The corresponding operational constraints are given below: 

\textbf{$\bullet$} \textit{Constraints for Electrical Subsystems:}
The varying outputs of RESs, e.g., photovoltaic (PV) arrays and wind turbines (WTs), are constrained by \eqref{cons3-1} and \eqref{cons3-2}. \eqref{cons3-3}--\eqref{cons3-9} are general operational constraints for electrical storage systems (ESSs), e.g., battery banks (BBs). The charging and discharging power are restricted by \eqref{cons3-3}. The state-of-charge is described by \eqref{cons3-7}-\eqref{cons3-9}. Note that \eqref{cons3-9} illustrates that the net charging capacities of ESSs should be zero after a daily charging-discharging cycle. \eqref{cons3-10} and \eqref{cons3-11} represent, respectively, the active and reactive power balance within each  microgrid. Moreover, \eqref{cons3-13} is the capacity constraint for low-voltage substations, which can be linearized by the technique in Ref. \cite{linear1}.
\setlength{\arraycolsep}{-0.3em}
\begin{eqnarray}
&&0\leq p_{k,i}^{s,t}\leq \delta_{k,i}^{s,t} \overline{P}_k x_{k,i},\quad \forall k \in \Omega_{\rm res},\forall i \in \Lambda, \forall t \in \mathcal{T}\label{cons3-1}\\
\nonumber&&p_{k,i}^{s,t}\tan \varphi_{k}^{\rm{min}} \leq q_{k,i}^{s,t}\leq p_{k,i}^{s,t} \tan \varphi_{k}^{\rm{max}} ,\\ &&\qquad\quad\qquad\qquad\qquad\qquad\qquad\quad\;\;\; \forall k \in \Omega_{\rm res},\forall i \in \Lambda,  \forall t \label{cons3-2}\\
&&0\leq pc_{k,i}^{s,t},pd_{k,i}^{s,t}\leq \overline{P}_k x_{k,i},\quad\forall k  \in \Omega_{\rm ess},\forall i, \forall t\label{cons3-3} \\
&&\nonumber soc_{k,i}^{s,t+1}=soc_{k,i}^{s,t}+\left(pc_{k,i}^{s,t} \eta^{\rm ch}_k-pd_{k,i}^{s,t}/\eta^{\rm dis}_k\right) \Delta_t,\\&&\qquad\quad\qquad\qquad\qquad\qquad\qquad\quad \forall k  \in \Omega_{\rm ess},\forall i \in \Lambda,  \forall t \label{cons3-7}\\
&&\nonumber \left(1-DoD_k\right)\overline{E}_k x_{k,i}\leq soc_{k,i}^{s,t}\leq \overline{E}_k x_{k,i},\\&&\quad\qquad\qquad\qquad\qquad\qquad\qquad\quad \forall k \in \Omega_{\rm ess},\forall i \in \Lambda,  \forall t \label{cons3-8}\\
&&\sum\limits_{t \in \mathcal{T}}\left(pc_{k,i}^{s,t} \eta_{k}^{\rm ch}-pd_{k,i}^{s,t}/\eta_{k}^{\rm dis}\right)=0,\;\forall k  \in \Omega_{\rm ess},\forall i \in \Lambda, \forall t \label{cons3-9}\\
\nonumber&&\sum\limits_{k \in\Omega_{\rm res}}p_{k,i}^{s,t}+\sum\limits_{k \in\Omega_{\rm ess}}\left(pd_{k,i}^{s,t}-pc_{k,i}^{s,t}\right)+p_{{\rm lv},i}^{s,t}\\
&&\qquad\qquad\qquad\qquad\quad\qquad=pl_i^{s,t}+p_{{\rm elz},i}^{s,t},\quad \forall i \in \Lambda,  \forall t \label{cons3-10}\\
&&\sum\limits_{k \in \Omega_{\rm res}}q_{k,i}^{s,t}+q_{{\rm lv},i}^{s,t}=pl_i^{s,t}\tan\varphi_{{\rm pl},i}^{s,t},\quad\forall i \in \Lambda, \forall t \label{cons3-11}\\
&& pl_i^{s,t}+ls_i^{s,t} = PD_i^{s,t},\quad \forall i \in \mathcal{N}^+, \forall t \label{cons3-12}\\
&&0 \leq pl_i^{s,t},ls_i^{s,t} \leq PD_i^{s,t},\quad \forall i \in \mathcal{N}^+, \forall t \label{cons3-12-1}\\
&&\left(p_{{\rm lv},i}^{s,t}\right)^2+\left(q_{{\rm lv},i}^{s,t}\right)^2\leq \left(S_{{\rm lv},i}^{\rm{max}}\right)^2,\quad \forall i \in \Lambda, \forall t \label{cons3-13}
\end{eqnarray}

\textbf{$\bullet$} \textit{Constraints for Hydrogen Subsystems:}
Electrolyzers (ELZs) and hydrogen tanks (HTs) constitute the hydrogen storage system (HSS). \eqref{cons3-14} below captures the electricity-hydrogen transition process of  ELZs, and \eqref{cons3-16} describes the limit on hydrogen production. $x_{{\rm elz},i}$ denotes the number of ELZs deployed at node $i$. \eqref{cons3-20} describes the massive balance of HTs, taking into account the dissipation factor $\phi_{ht}$. The hydrogen purchased from the local hydrogen market is constrained by \eqref{cons3-20-1}. The capacity limitation of HTs is given in \eqref{cons3-21}. $x_{{\rm ht},i}$ indicates the invested number of HTs. The met ($gl_i^{s,t}$) and unmet ($ul_z^{s,t}$) refueling demand of HVs are governed by \eqref{cons3-22} and \eqref{cons3-22-1}, and $gl_i^{s,t}$ is also constrained by \eqref{consHD} due to the service rate ($SR$) of hydrogen dispensers (HDs). 
\begin{eqnarray}
&&g_{{\rm elz},i}^{s,t}=\frac{\eta_{\rm elz} p_{{\rm elz},i}^{s,t}}{LHV_{\rm H_2}},\quad \forall i \in \Lambda, \forall t\label{cons3-14} \\
&&0\leq p_{{\rm elz},i}^{s,t}\leq \overline{P}_{\rm elz}x_{{\rm elz},i} ,\quad \forall i \in \Lambda,  \forall t \label{cons3-16}\\
\nonumber&&loh_{{\rm ht},i}^{s,t+1}=loh_{{\rm ht},i}^{s,t}\left(1-\phi_{\rm ht}\right)\\
&&\qquad\qquad\qquad +\left(g_{{\rm elz},i}^{s,t}+g_{{\rm pur},i}^{s,t}-gl_{i}^{s,t}\right)\Delta_t,\;\forall i \in \Lambda, \forall t \label{cons3-20}\\
&&0\leq g_{{\rm pur},i}^{s,t}\leq\overline{G}_{{\rm pur},z}^{t}u_i,\quad \forall i \in \Lambda_z,z\in\mathcal{Z},  \forall t\label{cons3-20-1}\\
&&0\leq loh_{{\rm ht},i}^{s,t}\leq \overline{P}_{\rm ht}x_{{\rm ht},i},\quad \forall i \in \Lambda,  \forall t \label{cons3-21}\\
&&\sum_{i\in \Lambda_z} gl_{i}^{s,t}+ul_{z}^{s,t}= \xi_{z}^{s,t},\quad \forall z \in \mathcal{Z},  \forall t \label{cons3-22}\\
&&0\leq gl^{s,t}_{i},ul_{z}^{s,t}\leq \xi_{z}^{s,t} ,\quad \forall i \in \Lambda_z,z\in\mathcal{Z},  \forall t \label{cons3-22-1}\\
&&0\leq gl_{i}^{s,t}\leq SR\cdot n_{i} ,\quad \forall i \in \Lambda,  \forall t \label{consHD}
\end{eqnarray}

\textbf{$\bullet$} \textit{Constraints for PDN:}
The operation of PDN is described by the linearized Disflow model, as in \eqref{cons3-25}--\eqref{cons3-35} \cite{DONG2023120849}. By integrating multiple hydrogen-electrical microgrids, the nodal power balance satisfies \eqref{cons3-25}--\eqref{cons3-28}. \eqref{cons3-28-1} is the capacity limit of distribution lines. \eqref{cons3-29} provides a bridge between the nodal voltage and distributed power, which is in the spirit of Ohm's Law \cite{6507355}. The security range of voltage deviation is constrained by \eqref{cons3-30}. In \eqref{cons3-35}, the exchange power through the middle-voltage substation is limited by the transactive capacity. Similar to \eqref{cons3-13}, the quadratic constraints \eqref{cons3-28-1} and \eqref{cons3-35} can also be linearized.
\setlength{\arraycolsep}{-0.5em}
\begin{eqnarray}
&&fp_{ji}^{s,t}-\sum\limits_{l\in\mathcal{C}(i)}fp_{il}^{s,t}= \left\{\begin{array}{l} ~ p_{{\rm lv},i}^{s,t},\; \forall i \in \Lambda,  \forall t \\ ~ pl_{i}^{s,t},\; \forall i \in \mathcal{N^{+}} \backslash \Lambda, \forall t\end{array}\right.\label{cons3-25}\\
&&fq_{ji}^{s,t}-\sum\limits_{l\in\mathcal{C}(i)}fq_{il}^{s,t}= \left\{\begin{array}{l}~ q_{{\rm lv},i}^{s,t},\; \forall i \in \Lambda,  \forall t \\ ~pl_{i}^{s,t}\tan\varphi_{{\rm pl},i}^{s,t}, \forall i \in \mathcal{N^{+}} \backslash \Lambda,  \forall t\end{array}\right.\label{cons3-26}\\
&&\sum\limits_{l\in\mathcal{C}(0)}fp_{0l}^{s,t}=p_{\rm mv}^{s,t}\geq 0,\quad  \forall t \label{cons3-27}\\
&&\sum\limits_{l\in\mathcal{C}(0)}fq_{0l}^{s,t}=q_{\rm mv}^{s,t}\geq 0,\quad  \forall t \label{cons3-28}\\
&&\left(fp_{ij}^{s,t}\right)^2+\left(fq_{ij}^{s,t}\right)^2\leq\left(S_{ij}^{s,t}\right)^2,\quad\forall(i,j)\in\mathcal{L},\forall t\label{cons3-28-1}\\
&&U_i^{s,t}- U_j^{s,t}=\left(r_{ij}fp_{ij}^{s,t}+ x_{ij}fq_{ij}^{s,t}\right)/U_0,\;\forall (i,j)\in\mathcal{L},   \forall t \label{cons3-29}\\
&&U_i^{\rm{min}}\leq U_i^{s,t}\leq U_i^{\rm{max}},\quad \forall i\in\mathcal{N},   \forall t \label{cons3-30}\\
&&\left(p_{\rm mv}^{s,t}\right)^2+\left(q_{\rm mv}^{s,t}\right)^2\leq \left(S_{\rm mv}^{\rm{max}}\right)^2,\quad  \forall t \label{cons3-35}
\end{eqnarray}

Collecting the $|\mathcal{S}|$ branches together, the lower-level problem holds the form of $\sum_{s\in\mathcal{S}}\pi_s\min Q_s(x_{k,i},n_{i},\xi_{z}^{s,t})$ in \eqref{Objective Function:1}, subjecting to constraints \eqref{cons3-1}--\eqref{cons3-35} for all $s\in\mathcal{S}$ as in \eqref{branches}, where $\pi_s$ is the probability of scenario $s$ satisfying $\sum_{s\in\mathcal{S}}\pi_s=1$.

\subsubsection{Middle-Level Problem}
It is essential to further ascertain whether the siting and sizing decisions are profitable enough with the uncertainty in refueling demand, as hydrogen refueling serves as the primary revenue source. Hence, the middle-level problem focuses on identifying the ``worst'' situation of refueling demand, and is represented as the ``$\max$''s in \eqref{Objective Function:1}. Specifically, for each scenario $s$, we adopt a DDU set, as shown in \eqref{ddu-set-1}--\eqref{ddu-set-4}, to model the refueling demand ($\xi_{z}^{s,t}$). 
\setlength{\arraycolsep}{-0.4em}
\begin{eqnarray}
	&&\Xi^s(n_{i})=\Bigg\{\xi_z^{s,t}\geq0~\bigg|~\xi_{z}^{s,t}\leq\overline{\xi}_{z}^{s,t}+\overline{\gamma}_{z}^t\displaystyle\sum_{i\in\Lambda_z} n_{i},\quad \forall z ,\forall t;\label{ddu-set-1}\\
	&&\;\quad\quad\quad\quad\quad\quad\quad\qquad\xi_{z}^{s,t}\geq\underline{\xi}_{z}^{s,t}+\underline{\gamma}^t_{z}\displaystyle\sum_{i\in\Lambda_z} n_{i},\quad \forall z,\forall t;\label{ddu-set-2}\\
	&&\;\quad\quad\quad\quad\quad\quad\quad\qquad\sum\limits_{z\in\mathcal{Z}}\xi_{z}^{s,t}\leq \overline{\zeta}^{s,t}+\overline{\alpha}^{t}\sum_{i\in\Lambda}n_{i},\quad \forall t;\label{ddu-set-3}\\
	&&\;\quad\quad\quad\quad\quad\quad\quad\qquad\sum\limits_{z\in\mathcal{Z}}\xi_{z}^{s,t}\geq\underline{\zeta}^{s,t}+\underline{\alpha}^{t}\sum_{i\in\Lambda}n_{i},\quad \forall t \Bigg\} \label{ddu-set-4}
\end{eqnarray}
\eqref{ddu-set-1} and \eqref{ddu-set-2} respectively define the upper and lower bounds on intra-zonal refueling demand $\xi_{z}^{s,t}$, where $\overline{\xi}_{z}^{s,t}$ and $\underline{\xi}_{z}^{s,t}$ represent the basic bounds, and induced coefficients $\overline{\gamma}_{z}^t$ and $\underline{\gamma}_{z}^t$ are introduced to reflect DIE in each zone. The number of HDs, whose information is available to HV drivers through the transportation information system, is chosen as the representative of microgrid refueling capacities. The DDU set establishes a relationship between refueling demand and the number of HDs, rendering the bounds of $\xi_{z}^{s,t}$ increase with $\sum_{i\in\Lambda_z} n_{i}$. Shift the perspective from a specific zone $z\in\mathcal{Z}$ to encompassing the entire town, \eqref{ddu-set-3} and \eqref{ddu-set-4} provide the variation interval of total refueling demand $\sum_{z\in\mathcal{Z}}\xi_{z}^{s,t}$, which is also decision-dependent. The meanings of $\overline{\zeta}^{s,t}/\underline{\zeta}^{s,t}$  (resp. $\overline{\alpha}^{t}/\underline{\alpha}^{t}$) are analogous to those of $\overline{\xi}_{z}^{s,t}/\underline{\xi}_{z}^{s,t}$  (resp. $\overline{\gamma}_{z}^t/\underline{\gamma}_{z}^t$). Generally, we have $[\underline{\zeta}^{s,t}+\underline{\alpha}^{t}\sum_{i\in\Lambda}n_{i}, \overline{\zeta}^{s,t}+\overline{\alpha}^{t}\sum_{i\in\Lambda}n_{i}]\subseteq[\sum_{z\in\mathcal{Z}}( \underline{\xi}_{z}^{s,t}+\underline{\gamma}^t_{z}\sum_{i\in\Lambda_z} n_{i}) ,\sum_{z\in\mathcal{Z}}(\overline{\xi}_{z}^{s,t}+\overline{\gamma}_{z}^t\sum_{i\in\Lambda_z} n_{i}) ]$. Hence, mathematically, \eqref{ddu-set-3} and \eqref{ddu-set-4} also serve to avoid over conservativeness of the DDU sets.

Fig. \ref{ddu-set-ill} provides an example for illustration of the DDU and DIU sets. Feasible regions of DDU and DIU sets are built by solid and dashed lines, respectively. The DIU set overlooks the DIE, and can be obtained by omitting the induced terms in $\Xi^s(n_{i})$, i.e., setting $\overline{\gamma}_{z}^t=\underline{\gamma}_{z}^t=\overline{\alpha}^t=\underline{\alpha}^t=0$. It shows that, due to DIE, all bounds of the DDU set see an increase with the number of HDs ($n_i$) compared to those of the DIU one. Hence, if the DIU set is adopted for NHEMP decisions, the derived solution may seriously underperform with respect to the investment based on the DDU set, since the former one fails to take advantage of the profitability in hydrogen provision associated with the induced refueling demand.

\begin{remark}	
	When $\overline{\gamma}_{z}^t=\underline{\gamma}_{z}^t=\overline{\alpha}^t=\underline{\alpha}^t=0$, $\Xi^s(n_{i})$ will reduce to the static DIU set. Hence, the traditional DIU model is a special case of the proposed DDU one. In this regard, the properties and solution method presented in this paper are also applicable to the static trilevel DIU problem.
\end{remark}

\begin{figure}[]
	\centering
	\includegraphics[scale=0.42]{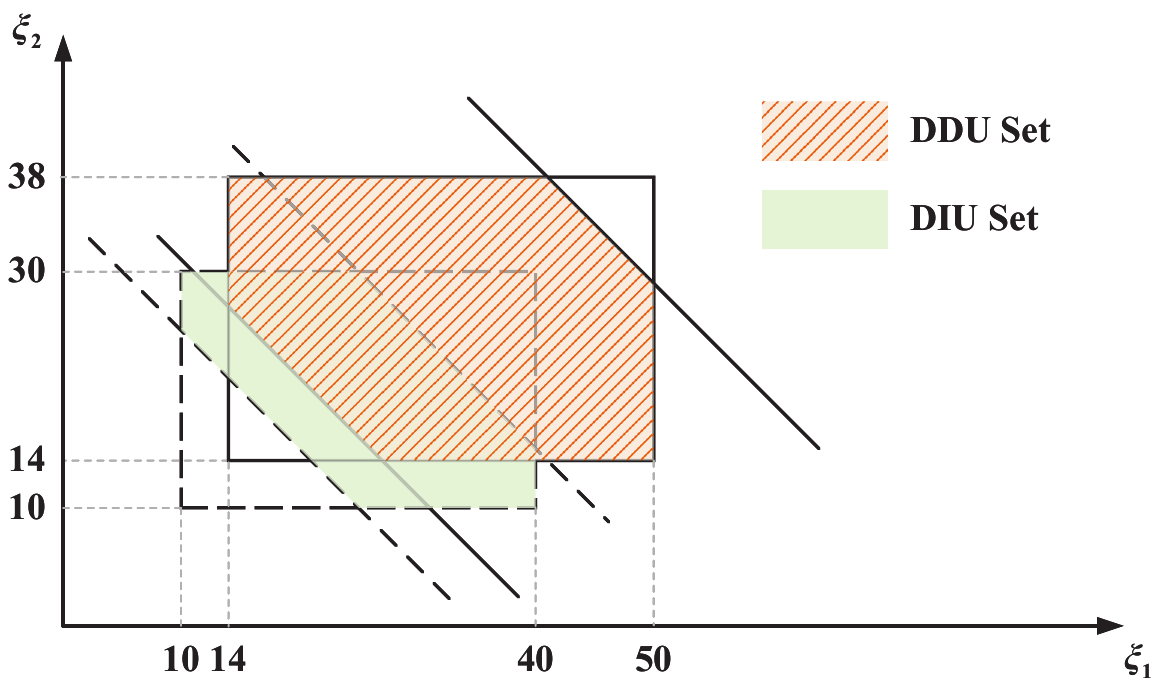}
	\caption{Illustration of the DDU and DIU sets. (Without loss of generality, indices $s$ and $t$ are suppressed, and $\left|\mathcal{Z} \right|=2$. Then, $\underline{\xi}_{1}$, $\overline{\xi}_{1}$, $\underline{\xi}_{2}$, $\overline{\xi}_{2}$ and $\underline{\zeta}$, $\overline{\zeta}$ are set to 10, 40, 10, 30, 35 and 55, respectively. Besides, $\sum_{i\in\Lambda_1}n_i=2$ and $\sum_{i\in\Lambda_2}n_i=1$. For the DDU set, $\underline{\gamma}_{1}$, $\overline{\gamma}_{1}$, $\underline{\gamma}_{2}$, $\overline{\gamma}_{2}$, $\underline{\alpha}$ and $\overline{\alpha}$ are, respectively, chosen as 2, 5, 4, 8, 2 and 8. For the DIU one, $\overline{\gamma}_{z}=\underline{\gamma}_{z}=\overline{\alpha}=\underline{\alpha}=0$.)}
	\label{ddu-set-ill}
\end{figure}

\subsection{Compact Formulation}
For the sake of compactness, and without any loss of generality, we mathematically express the NHEMP problem in \eqref{Objective Function:1}--\eqref{ddu-set-4} in the matrix form as \textbf{DDU-NHEMP} below:
\begin{eqnarray}
	\nonumber&&\bm{{\rm DDU-NHEMP}}:\\
	&&\Phi~=~\min_{\bm{x}\in\mathcal{X}}~\bm{c}^{\intercal} \bm{x}+\sum_{s\in\mathcal{S}}\pi_s\max_{\bm{\xi}_s\in\Xi_s(\bm{x})}\min_{\bm{y}_s\in\mathcal{Y}_s(\bm{x},\bm{\xi}_s)}\bm{d}_s^{\intercal}\bm{y}_s,\label{RO-DDU}
\end{eqnarray}
where
\begin{eqnarray}
	&&\mathcal{X}=\left\{\bm{x}\in\left\lbrace0,1 \right\rbrace^{n_{\rm x_1}}\times \mathbb{N}^{n_{\rm x_2}}\;\big|\;\bm{Ax}\leq\bm{b}\right\},\\
	&&\Xi_s(\bm{x})=\left\{\bm{ \xi}_s\in\mathbb{R}_+^{n_{\rm \xi}}\;\big|\;\bm{H}_s\bm{\xi}_s\leq\bm{h}_s-\bm{F}_s\bm{x}\right\},\label{set-Xi}\\
	&&\mathcal{Y}_s(\bm{x},\bm{\xi}_s)=\left\{\bm{y}_s\in\mathbb{R}_+^{n_{\rm y}}\;\big|\;\bm{B}_s\bm{y}_s\geq\bm{f}_s -\bm{G}_s\bm{x}-\bm{E}_s\bm{\xi}_s\right\}.\label{set-y}
\end{eqnarray}

In \textbf{DDU-NHEMP},  $\bm{x}$  denotes the upper-level decision vector for hydrogen-electrical microgrids' investment strategies, including $u_i$, $x_{k,i}$, and $n_i$. $\pi_s$ is the probability of scenario $s\in\mathcal{S}$. Vector $\bm{\xi}_s$ expresses the associated DDU parameters,  $\xi_z^{s,t}$, in the middle-level problem with scenario $s$. $\bm{y}_s$ represents the operation-related decision vector of branch $s$ in the lower-level, including $p_{\rm mv}^{s,t}$, $g_{{\rm pur},i}^{s,t}$, $ls_i^{s,t}$, $pc_{k,i}^{s,t}$, $pd_{k,i}^{s,t}$, $pl_i^{s,t}$, $gl_i^{s,t}$, $p_{k,i}^{s,t}$, $q_{k,i}^{s,t}$, $soc_{k,i}^{s,t}$, $p_{{\rm lv},i}^{s,t}$, $p_{{\rm elz},i}^{s,t}$, $q_{{\rm lv},i}^{s,t}$, $g_{{\rm elz},i}^{s,t}$, $loh_{{\rm ht},i}^{s,t}$, $ul_z^{s,t}$, $fp_{ij}^{s,t}$, $fq_{ij}^{s,t}$, $q_{\rm mv}^{s,t}$, and $U_i^{s,t}$.  $n_{\rm x_1}/n_{\rm x_2}$, $n_{\rm \xi}$, and $n_{\rm y}$ are appropriate quantities standing for dimensions of $\bm{x}$, $\bm{y}_s$, and $\bm{\xi}_s$, respectively. Coefficient vectors $\bm{c}$, $\bm{d}_s$, $\bm{b}$, $\bm{h}_s$, and $\bm{f}_s$, and constraint matrices $\bm{A}$,  $\bm{H}_s$,  $\bm{F}_s$, $\bm{B}_s$,  $\bm{G}_s$, and $\bm{E}_s$ are all with compatible dimensions. We wish to point out that vectors in this paper are all column vectors. 

The objective function is abstracted from \eqref{Objective Function:1} and \eqref{Objective Function:3}. Specifically, $\bm{c}^{\intercal}\bm{x}$ represents the upper-level objective function as in Line 1 of \eqref{Objective Function:1}, and $\bm{d}_s^{\intercal}\bm{y}_s$ is for the $s$-th branch of the lower-level problem as in \eqref{Objective Function:3}. Defined by constraints \eqref{cons2-1}--\eqref{cons2-2-1}, $\mathcal{X}$ indicates the feasible set of $\bm{x}$. $\Xi_s(\bm{x})$ is the DDU set as exhibited in \eqref{ddu-set-1}--\eqref{ddu-set-4}, which is dependent on $\bm{x}$. Further, corresponding to \eqref{cons3-1}--\eqref{cons3-35}, $\mathcal{Y}_s(\bm{x},\bm{\xi}_s)$ shows the feasible region of $\bm{y}_s$. Moreover, we use $m_{\rm \xi}$ and $m_{\rm y}$ to denote the numbers of constraints (rows) of $\Xi_s(\bm{x})$ and $\mathcal{Y}_s(\bm{x},\bm{\xi}_s)$, respectively.

\subsection{Some Structural Properties}\label{analysis}

\subsubsection{Relatively Complete Recourse} \label{complete recourse}
\textbf{DDU-NHEMP} possesses the \textit{relatively complete recourse}, i.e., for $s\in\mathcal{S}$, given any $\hat{\bm{x}}\in\mathcal{X}$ and $\hat{\bm{\xi}}_s\in\Xi_s(\hat{\bm{x}})$, there exists feasible $\bm{y}_s$ such that $\bm{y}_s\in\mathcal{Y}_s(\hat{\bm{x}},\hat{\bm{\xi}}_s)$, and the optimal value of $\min_{\bm{y}_s\in\mathcal{Y}_s(\hat{\bm{x}},\hat{\bm{\xi}}_s)}\bm{d}_s^{\intercal}\bm{y}_s$ is finite. This is achieved due to the inclusion of non-negative auxiliary variables $gl_{i}^{s,t}$ and $pl_i^{s,t}$.

\subsubsection{Definition of $\mathcal{OU}_s$}\label{def-OU}
For a given $\bm{x}\in\mathcal{X}$, the $\max_{\bm{\xi}_s\in\Xi_s(\bm{x})}\min_{\bm{y}_s\in\mathcal{Y}_s(\bm{x},\bm{\xi}_s)}\bm{d}_s^{\intercal}\bm{y}_s$ corresponding to each scenario $s\in\mathcal{S}$  can be converted to the following single-level problem through the duality theory \cite{karloff2008linear}:
\begin{eqnarray}
	&&\max_{\bm{\xi}_s\in\Xi_s(\bm{x}),\bm{\lambda}_s\in\Pi_s}\left(\bm{f}_s-\bm{G}_s\bm{x}-\bm{E}_s\bm{\xi}_s\right)^{\intercal}\bm{\lambda}_s\label{dual-2stage},\quad \forall s\in\mathcal{S},
\end{eqnarray}
where $\bm{\lambda}_s\in\mathbb{R}_{+}^{m_{\rm y}}$ is the multiplier of \eqref{set-y} and its feasible set $\Pi_s$ is defined as:
\begin{eqnarray}
	&&\Pi_s=\left\lbrace \bm{\lambda}_s\in\mathbb{R}_{+}^{m_{\rm y}}\;\big|\;\bm{d}_s-\bm{B}_s^{\intercal}\bm{\lambda}_s\geq\bm{0}\right\rbrace,\quad \forall s\in\mathcal{S}.
\end{eqnarray}
Of note, problem \eqref{dual-2stage} is a bilinear program. By alternately fixing $\bm{\xi}_s$ and $\bm{\lambda}_s$, and leveraging the property of linear programs (LPs) \cite{karloff2008linear}, a non-trivial proposition can be obtained:

\begin{prop}\label{prop-1}
For given $\bm{x}\in\mathcal{X}$ and $s\in\mathcal{S}$, when \eqref{dual-2stage} reaches its optimum, there exist optimal $\hat{\bm{\xi}}_s$ and $\hat{\bm{\lambda}}_s$ being one of the extreme points of $\Xi_s(\bm{x})$ and $\Pi_s$, respectively. And $\hat{\bm{\xi}}_s$ is also an optimal solution to $\max_{\bm{\xi}_s\in\Xi_s(\bm{x})}\min_{\bm{y}_s\in\mathcal{Y}_s(\bm{x},\bm{\xi}_s)}\bm{d}_s^{\intercal}\bm{y}_s$.
\end{prop}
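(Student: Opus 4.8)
The plan is to exploit the bi-affine structure of the bilinear program \eqref{dual-2stage}: for a fixed multiplier $\bm{\lambda}_s$ its objective is linear in $\bm{\xi}_s$ over the polyhedron $\Xi_s(\bm{x})$, and for a fixed $\bm{\xi}_s$ it is linear in $\bm{\lambda}_s$ over the polyhedron $\Pi_s$; thus \eqref{dual-2stage} splits into two families of linear programs. Before the main argument I would record three facts. First, $\Xi_s(\bm{x})$ is a nonempty \emph{bounded} polyhedron, hence a polytope: $\bm{\xi}_s\geq\bm{0}$ together with the upper-bound rows of $\Xi_s(\bm{x})$ (cf.\ \eqref{ddu-set-1} and \eqref{ddu-set-3}) bound every coordinate, so it has finitely many extreme points and any linear objective attains its maximum at one of them. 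Second, $\Pi_s\subseteq\mathbb{R}_{+}^{m_{\rm y}}$ contains no line and is therefore pointed; moreover, by the relatively complete recourse of \textbf{DDU-NHEMP}, the inner primal $\min_{\bm{y}_s\in\mathcal{Y}_s(\bm{x},\bm{\xi}_s)}\bm{d}_s^{\intercal}\bm{y}_s$ is feasible and finite for every $\bm{\xi}_s\in\Xi_s(\bm{x})$, so LP strong duality makes $\Pi_s$ nonempty and equates this primal value with the maximum of \eqref{dual-2stage} taken over $\bm{\lambda}_s$ alone. Third, as a consequence the optimal value $\theta_s$ of \eqref{dual-2stage} is finite, being the maximum over the compact set $\Xi_s(\bm{x})$ of the finite, continuous map $\bm{\xi}_s\mapsto\min_{\bm{y}_s\in\mathcal{Y}_s(\bm{x},\bm{\xi}_s)}\bm{d}_s^{\intercal}\bm{y}_s$.

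The core is a two-step ``alternating fixing''. Let $(\bm{\xi}_s^{*},\bm{\lambda}_s^{*})$ be any optimal solution of \eqref{dual-2stage}. Freeze $\bm{\lambda}_s=\bm{\lambda}_s^{*}$; the LP $\max_{\bm{\xi}_s\in\Xi_s(\bm{x})}(\bm{f}_s-\bm{G}_s\bm{x}-\bm{E}_s\bm{\xi}_s)^{\intercal}\bm{\lambda}_s^{*}$ has optimal value exactly $\theta_s$ — it is $\geq\theta_s$ since $\bm{\xi}_s^{*}$ is feasible in it, and $\leq\theta_s$ by optimality of $(\bm{\xi}_s^{*},\bm{\lambda}_s^{*})$ — and, $\Xi_s(\bm{x})$ being a polytope, this value is attained at an extreme point $\hat{\bm{\xi}}_s$, so $(\hat{\bm{\xi}}_s,\bm{\lambda}_s^{*})$ is again optimal. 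Now freeze $\bm{\xi}_s=\hat{\bm{\xi}}_s$; the LP $\max_{\bm{\lambda}_s\in\Pi_s}(\bm{f}_s-\bm{G}_s\bm{x}-\bm{E}_s\hat{\bm{\xi}}_s)^{\intercal}\bm{\lambda}_s$ again has finite optimal value $\theta_s$, and since $\Pi_s$ is a pointed polyhedron, an LP over it with finite optimum is attained at an extreme point $\hat{\bm{\lambda}}_s$. As the first block is untouched in the second step, $(\hat{\bm{\xi}}_s,\hat{\bm{\lambda}}_s)$ is optimal for \eqref{dual-2stage} with $\hat{\bm{\xi}}_s$ and $\hat{\bm{\lambda}}_s$ extreme points of $\Xi_s(\bm{x})$ and $\Pi_s$, respectively.

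For the last assertion I would apply LP strong duality once more, pointwise in $\bm{\xi}_s$: by relatively complete recourse, $\min_{\bm{y}_s\in\mathcal{Y}_s(\bm{x},\bm{\xi}_s)}\bm{d}_s^{\intercal}\bm{y}_s=\max_{\bm{\lambda}_s\in\Pi_s}(\bm{f}_s-\bm{G}_s\bm{x}-\bm{E}_s\bm{\xi}_s)^{\intercal}\bm{\lambda}_s$ for every $\bm{\xi}_s\in\Xi_s(\bm{x})$; maximizing both sides over $\bm{\xi}_s\in\Xi_s(\bm{x})$ shows that $\max_{\bm{\xi}_s\in\Xi_s(\bm{x})}\min_{\bm{y}_s\in\mathcal{Y}_s(\bm{x},\bm{\xi}_s)}\bm{d}_s^{\intercal}\bm{y}_s=\theta_s$, and since $\hat{\bm{\xi}}_s$ attains $\theta_s$ in \eqref{dual-2stage} it also attains it in this max--min, i.e.\ it is optimal there.

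The steps I expect to require the most care are the extreme-point attainments and the ordering of the two re-optimizations. One must actually verify that $\Xi_s(\bm{x})$ is bounded — this is exactly where the DIE-modified bounds \eqref{ddu-set-1} and \eqref{ddu-set-3} (with $\bm{\xi}_s\geq\bm{0}$) come in — so that it is a polytope; and one must note that $\Pi_s$, though possibly unbounded, is line-free, so the classical fact ``a linear program with finite optimum over a pointed polyhedron has a vertex optimizer'' applies. It is also essential that the two re-optimizations be performed \emph{sequentially} (first over $\bm{\xi}_s$, then over $\bm{\lambda}_s$, never revisiting $\bm{\xi}_s$), for otherwise re-optimizing one block could spoil the vertex property just secured for the other. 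Both attainments ultimately rest on the finiteness $\theta_s<+\infty$ supplied by the relatively complete recourse established earlier.
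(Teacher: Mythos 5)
Your proposal is correct and follows essentially the same route the paper indicates for Proposition \ref{prop-1}: alternately fixing $\bm{\lambda}_s$ and $\bm{\xi}_s$ and invoking standard LP facts (vertex attainment over the polytope $\Xi_s(\bm{x})$ and the pointed polyhedron $\Pi_s$, plus strong duality under relatively complete recourse to tie the bilinear value to the max--min). The paper only sketches this argument, and your write-up supplies the details (boundedness of $\Xi_s(\bm{x})$, pointedness of $\Pi_s$, sequential re-optimization) consistently with that sketch.
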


Different from $\Xi_s(\bm{x})$, $\Pi_s$ is a fixed polyhedron that is independent of $\bm{x}$. With a given $\bm{\lambda}_s\in\Pi_s$, the optimal $\bm{\xi}_s$ of problem \eqref{dual-2stage} can be obtained by solving the following LP:
\begin{eqnarray}
	&&\bm{\Psi}_s(\bm{x},\bm{\lambda}_s):\max_{\bm{\xi}_s\in\Xi_s(\bm{x})}\left(-\bm{E}_s\bm{\xi}_s\right)^{\intercal}\bm{\lambda}_s,\quad\forall s\in\mathcal{S}.
\end{eqnarray}
By resorting to its Karush–Kuhn–Tucker (KKT) conditions \cite{boyd2004convex}, the solution
set of $\bm{\Psi}_s(\bm{x},\bm{\lambda}_s)$ can be defined as:
\begin{eqnarray}
	&&\mathcal{OU}_s(\bm{x},\bm{\lambda}_s)\!=\!\Big\{\bm{\xi}_s\in\mathbb{R}_+^{n_{\rm \xi}},\bm{\vartheta}_s\in\mathbb{R}_{+}^{m_{\rm \xi}}\Big|\bm{H}_s\bm{\xi}_s\leq\bm{h}_s-\bm{F}_s\bm{x},\label{ou-1}\\
	&&\qquad\qquad\;\;\;\;\quad\quad\bm{H}_s^{\intercal}\bm{\vartheta}_s+\bm{E}_s^{\intercal}\bm{\lambda}_s\geq\bm{0},\label{ou-2}\\
	&&\qquad\qquad\;\;\;\;\quad\quad\bm{\vartheta}_s\circ(\bm{h}_s-\bm{F}_s\bm{x}-\bm{H}_s\bm{\xi}_s)=\bm{0},\label{ou-3}\\
	&&\qquad\qquad\;\;\;\;\quad\quad\bm{\xi}_s\circ(\bm{H}_s^{\intercal}\bm{\vartheta}_s+\bm{E}_s^{\intercal}\bm{\lambda}_s)=\bm{0}\Big\},\label{ou-4}
\end{eqnarray}
where $\bm{\vartheta}_s\in\mathbb{R}_{+}^{m_{\rm \xi}}$ is the corresponding multiplier of \eqref{set-Xi}, and $\circ$ denotes the Hadamard product. In $\mathcal{OU}_s(\bm{x},\bm{\lambda}_s)$, \eqref{ou-1} and \eqref{ou-2} are, respectively, primal and dual constraints of $\bm{\Psi}_s(\bm{x},\bm{\lambda}_s)$, and \eqref{ou-3} and \eqref{ou-4} are complementarity constraints. We wish to point out that $\mathcal{OU}_s$ will serve as an important component in our solution algorithm.\\

We note that the stochastic-robust \textbf{DDU-NHEMP} problem is computationally very challenging. On the one hand, it holds a complicated $\min-\max-\min$ structure with DDU sets. The traditional C\&CG method \cite{zeng2013solving}, a recognized algorithm developed for this type of problems with DIU sets, cannot handle it, since the DDU set $\Xi_s(\bm{x})$ varies with the upper-level decision $\bm{x}$ dynamically. In the DDU setting, the ``worst'' extreme point of $\Xi_s(\bm{x}^1)$ derived for $\bm x^1$ will generally not be an extreme point of $\Xi_s(\bm{x}^2)$ when $\bm x^1\neq \bm x^2$, or even may not belong to $\Xi_s(\bm{x}^2)$, which destroys the traditional C\&CG procedure. On the other hand, the \textbf{DDU-NHEMP} problem integrates a couple of stochastic scenarios in the lower-level. The computational complexity grows significantly with the number of scenarios. Fortunately, the scenario reduction strategy has been presented and applied in the literature to balance the computational efficiency and accuracy, e.g., in Refs. \cite{dupavcova2003scenario,heitsch2003scenario}.  This strategy is also adopted in our work to generate a typical scenario set to deal with this issue. Hence, in this paper, we only focus on developing an adaptive algorithm to efficiently address the challenging trilevel DDU problem with typical scenarios.

\section{Parametric Column-and-Constraint Generation Algorithm}\label{stochastic}
In this section, the PC\&CG algorithm, an advanced and exact decomposition method designed for DDU problems with the $\min-\max-\min$ structure, is introduced to address  \textbf{DDU-NHEMP}. We customize and develop the original PC\&CG in Ref. \cite{zeng2022two} to accommodate multiple DDU sets, which are defined with respect to $|\mathcal{S}|$ scenarios.

\subsection{Equivalent Single-Level Reformulation}\label{sec:single-level}
For each scenario $s\in\mathcal{S}$, let $\mathcal{P}_s$ be the set of extreme points of $\Pi_s$. Note that $\mathcal{P}_s$ is also independent of $\bm{x}$ because $\mathcal{P}_s\subseteq\Pi_s$. In Theorem \ref{thm-1} below, we present an equivalent single-level reformulation of \textbf{DDU-NHEMP} (referred to as \textbf{DDU-Single}). It is worth highlighting that this reformulation achieves the reduction in problem levels, and establishes the foundation of our solution method.

\begin{thm}\label{thm-1}
	\textbf{DDU-NHEMP} is equivalent to the single-level mixed-integer program (MIP):
\begin{align}
	\bm{{\rm DD}}&\bm{{\rm U-Single}}:~
	\min~\bm{c}^{\intercal} \bm{x}+\sum_{s\in\mathcal{S}}\pi_s\eta_s\label{Single}\\
	{\rm s.t.}~&A\bm{x}\leq\bm{b}\\
	&\eta_s\geq \bm{d}_s^{\intercal}\bm{y}_s^{\bm{\lambda}_s},\quad \forall\bm{\lambda}_s\in\mathcal{P}_s,\forall s\in\mathcal{S}\\
	&\bm{B}_s\bm{y}_s^{\bm{\lambda}_s}\geq\bm{f}_s -\bm{G}_s\bm{x}-\bm{E}_s\bm{\xi}_s^{\bm{\lambda}_s},\quad \forall\bm{\lambda}_s\in\mathcal{P}_s,\forall s\in\mathcal{S}\\
	&(\bm{\xi}_s^{\bm{\lambda}_s},\bm{\vartheta}_s^{\bm{\lambda}_s})\in\mathcal{OU}_s(\bm{x},\bm{\lambda}_s),\quad \forall\bm{\lambda}_s\in\mathcal{P}_s,\forall s\in\mathcal{S}\\
	&\bm{x}\in\left\lbrace0,1 \right\rbrace^{n_{\rm x_1}}\times \mathbb{N}^{n_{\rm x_2}},\bm{y}_s^{\bm{\lambda}_s}\in\mathbb{R}^{n_{\rm y}}_+,
\end{align}
where superscript $\bm{\lambda}_s$ is used to make a distinction between variables corresponding to different extreme points of $\Pi_s$.
\end{thm}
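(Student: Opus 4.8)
The plan is to establish the equivalence between \textbf{DDU-NHEMP} and \textbf{DDU-Single} by working scenario-by-scenario and exploiting the structural properties already derived in Section \ref{analysis}. First I would fix an arbitrary $\bm{x}\in\mathcal{X}$ and focus on a single scenario $s$. By Proposition \ref{prop-1} and the duality-based reformulation \eqref{dual-2stage}, the inner $\max_{\bm{\xi}_s\in\Xi_s(\bm{x})}\min_{\bm{y}_s\in\mathcal{Y}_s(\bm{x},\bm{\xi}_s)}\bm{d}_s^{\intercal}\bm{y}_s$ attains its optimum at an extreme point $\hat{\bm{\lambda}}_s$ of $\Pi_s$, hence at some $\hat{\bm{\lambda}}_s\in\mathcal{P}_s$, together with a matching $\hat{\bm{\xi}}_s\in\Xi_s(\bm{x})$. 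The key observation is that, once $\bm{\lambda}_s$ is restricted to the finite set $\mathcal{P}_s$, maximizing over $\bm{\xi}_s$ with $\bm{\lambda}_s$ fixed is exactly the LP $\bm{\Psi}_s(\bm{x},\bm{\lambda}_s)$, whose optimal-solution set is characterized by the KKT system $\mathcal{OU}_s(\bm{x},\bm{\lambda}_s)$ (using LP strong duality and the relatively complete recourse from Section \ref{complete recourse} to guarantee finiteness and the absence of duality gaps). So the inner trilevel value equals $\max_{\bm{\lambda}_s\in\mathcal{P}_s}\,\bigl(\bm{f}_s-\bm{G}_s\bm{x}-\bm{E}_s\bm{\xi}_s^{\bm{\lambda}_s}\bigr)^{\intercal}\bm{\lambda}_s$ with $\bm{\xi}_s^{\bm{\lambda}_s}$ ranging over $\mathcal{OU}_s(\bm{x},\bm{\lambda}_s)$.

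Next I would convert this ``$\max$ over a finite index set'' into the epigraph form used in \eqref{Single}. Introduce $\eta_s$ as an upper bound: $\eta_s\geq \bm{d}_s^{\intercal}\bm{y}_s^{\bm{\lambda}_s}$ for every $\bm{\lambda}_s\in\mathcal{P}_s$, where $\bm{y}_s^{\bm{\lambda}_s}\in\mathcal{Y}_s(\bm{x},\bm{\xi}_s^{\bm{\lambda}_s})$, i.e.\ satisfies $\bm{B}_s\bm{y}_s^{\bm{\lambda}_s}\geq\bm{f}_s-\bm{G}_s\bm{x}-\bm{E}_s\bm{\xi}_s^{\bm{\lambda}_s}$. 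Because the outer problem minimizes $\bm{c}^{\intercal}\bm{x}+\sum_s\pi_s\eta_s$ and each $\pi_s\geq 0$, at optimality each $\eta_s$ is pushed down to $\min_{\bm{y}_s^{\bm{\lambda}_s}}\bm{d}_s^{\intercal}\bm{y}_s^{\bm{\lambda}_s}$ over the feasible $\bm{y}_s^{\bm{\lambda}_s}$ for the \emph{binding} $\bm{\lambda}_s$, and simultaneously the LP duality identity $\min_{\bm{y}_s\in\mathcal{Y}_s(\bm{x},\bm{\xi}_s^{\bm{\lambda}_s})}\bm{d}_s^{\intercal}\bm{y}_s = \bigl(\bm{f}_s-\bm{G}_s\bm{x}-\bm{E}_s\bm{\xi}_s^{\bm{\lambda}_s}\bigr)^{\intercal}\bm{\lambda}_s$ does not generally hold for \emph{every} $\bm{\lambda}_s\in\mathcal{P}_s$, only an inequality does; so I must argue the reformulation is still tight. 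The standard argument is: the ``$\geq$'' direction (DDU-Single is a relaxation giving a lower bound on $\Phi$) follows since any optimal $(\bm{x},\bm{\xi}_s)$ of DDU-NHEMP together with the corresponding dual-optimal $\bm{\lambda}_s$ and primal-optimal $\bm{y}_s$ yields a feasible point of DDU-Single with the same objective; the ``$\leq$'' direction follows since for any feasible point of DDU-Single, weak LP duality gives $\eta_s\geq\bm{d}_s^{\intercal}\bm{y}_s^{\bm{\lambda}_s}\geq \max_{\bm{\lambda}_s\in\mathcal{P}_s}(\cdots)^{\intercal}\bm{\lambda}_s$ which by Proposition \ref{prop-1} equals the inner trilevel value at that $\bm{x}$, so the DDU-Single objective is an upper bound on the trilevel objective evaluated at $\bm{x}$, hence on $\Phi$.

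To make both directions rigorous I would (i) invoke relatively complete recourse so that $\min_{\bm{y}_s\in\mathcal{Y}_s(\bm{x},\bm{\xi}_s)}\bm{d}_s^{\intercal}\bm{y}_s$ is always finite and LP strong duality applies, giving \eqref{dual-2stage}; (ii) use Proposition \ref{prop-1} to restrict the outer $\max$ in \eqref{dual-2stage} to $(\bm{\xi}_s,\bm{\lambda}_s)$ with $\bm{\lambda}_s$ an extreme point of $\Pi_s$, i.e.\ in $\mathcal{P}_s$ (and note $\mathcal{P}_s$ is finite and independent of $\bm{x}$, so the index set in \eqref{Single} is legitimate); (iii) for each fixed $\bm{\lambda}_s\in\mathcal{P}_s$, replace the sub-maximization over $\bm{\xi}_s\in\Xi_s(\bm{x})$ by its KKT conditions, which are necessary and sufficient here since $\bm{\Psi}_s(\bm{x},\bm{\lambda}_s)$ is an LP, yielding the constraint $(\bm{\xi}_s^{\bm{\lambda}_s},\bm{\vartheta}_s^{\bm{\lambda}_s})\in\mathcal{OU}_s(\bm{x},\bm{\lambda}_s)$; and (iv) assemble the epigraph reformulation over all $s\in\mathcal{S}$ with weights $\pi_s$, checking that decoupling across scenarios is valid because the inner $\max-\min$ for distinct $s$ share only $\bm{x}$. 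The main obstacle I anticipate is step (iii) combined with the tightness argument in the second paragraph: one must be careful that imposing $\bm{\xi}_s^{\bm{\lambda}_s}\in\mathcal{OU}_s(\bm{x},\bm{\lambda}_s)$ for \emph{all} $\bm{\lambda}_s\in\mathcal{P}_s$ (not just the worst one) does not over-constrain the problem — it does not, because $\mathcal{OU}_s$ only pins down $\bm{\xi}_s^{\bm{\lambda}_s}$ to be $\bm{\Psi}_s$-optimal for that particular $\bm{\lambda}_s$, and these are independent copies indexed by $\bm{\lambda}_s$ — and that the minimization over $\bm{x}$ correctly selects, for each $s$, the $\bm{\lambda}_s$ achieving the inner $\max$ via the binding $\eta_s$ constraint. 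Verifying this interplay of the epigraph variable $\eta_s$, the per-$\bm{\lambda}_s$ copies of the recourse problem, and the KKT-encoded worst-case demand is where the real content of the proof lies.
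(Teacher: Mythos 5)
Your proposal is correct and takes essentially the same route as the paper's proof: Proposition~\ref{prop-1} restricts the worst case to extreme points $\bm{\lambda}_s\in\mathcal{P}_s$, the KKT set $\mathcal{OU}_s(\bm{x},\bm{\lambda}_s)$ encodes the $\bm{\Psi}_s$-optimal $\bm{\xi}_s$ for each such $\bm{\lambda}_s$, and enumeration over $\mathcal{P}_s$ with an epigraph variable $\eta_s$ per scenario yields \textbf{DDU-Single}; the paper simply packages this as Lemma~\ref{lemma-1} (replacing $\Xi_s(\bm{x})$ by $\Xi_s^*(\bm{x})=\bigcup_{\bm{\lambda}_s\in\mathcal{P}_s}\mathcal{OU}_s^{\bm{\xi}_s}(\bm{x},\bm{\lambda}_s)$), whereas you spell out the two bounds directly. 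One small polish point: in your lower-bound direction a feasible point of \textbf{DDU-Single} must carry copies $(\bm{\xi}_s^{\bm{\lambda}_s},\bm{y}_s^{\bm{\lambda}_s})$ for \emph{every} $\bm{\lambda}_s\in\mathcal{P}_s$, not only the worst-case one, which is easily arranged because each copy's recourse value is at most the worst-case value, so $\eta_s$ set to the max-min value remains feasible.
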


\begin{remark}
	The equivalence of \textbf{DDU-NHEMP} and \textbf{DDU-Single} implies that these two problems share the same optimal objective value, and the optimal upper-level solution of \textbf{DDU-NHEMP} is also optimal to \textbf{DDU-Single} and vice versa.
\end{remark}

\begin{proof}
To support our proof of Theorem \ref{thm-1}, the following lemma is introduced, whose detailed proof can be found in Appendix \ref{appen-1}.
\begin{lem}\label{lemma-1}
	For a given $\bm{x}$ and for each $s\in\mathcal{S}$,
	\begin{eqnarray}
		&&\max_{\bm{\xi}_s\in\Xi_s(\bm{x})}\min_{\bm{y}_s\in\mathcal{Y}_s(\bm{x},\bm{\xi}_s)}\bm{d}_s^{\intercal}\bm{y}_s=\max_{\bm{\xi}_s\in\Xi_s^*(\bm{x})}\min_{\bm{y}_s\in\mathcal{Y}_s(\bm{x},\bm{\xi}_s)}\bm{d}_s^{\intercal}\bm{y}_s,
	\end{eqnarray}
	where $\Xi_s^*(\bm{x})=\textstyle\bigcup_{\bm{\lambda}_s\in\mathcal{P}_s}\mathcal{OU}_s^{\bm{\xi}_s}(\bm{x},\bm{\lambda}_s)$, and for each $\bm{\lambda}_s\in\mathcal{P}_s$, $\mathcal{OU}_s^{\bm{\xi}_s}(\bm{x},\bm{\lambda}_s)$ is the projection of $\mathcal{OU}_s(\bm{x},\bm{\lambda}_s)$ onto its subspace hosting $\bm{\xi}_s$, i.e., $\mathcal{OU}_s^{\bm{\xi}_s}(\bm{x},\bm{\lambda}_s)\triangleq \{\bm{\xi}_s\;|\;(\bm{\xi}_s,\bm{\vartheta}_s)\in\mathcal{OU}_s(\bm{x},\bm{\lambda}_s)~{\rm for~ some}~\bm{\vartheta}_s\}$.
\end{lem}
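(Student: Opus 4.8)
The plan is to prove the two inequalities of Lemma~\ref{lemma-1} separately. The easy direction will follow from the observation that $\mathcal{OU}_s^{\bm{\xi}_s}(\bm{x},\bm{\lambda}_s)\subseteq\Xi_s(\bm{x})$, while the nontrivial direction will rest on Proposition~\ref{prop-1} together with the fact that, projected onto $\bm{\xi}_s$, the system \eqref{ou-1}--\eqref{ou-4} is \emph{exactly} the set of optimal solutions of the linear program $\bm{\Psi}_s(\bm{x},\bm{\lambda}_s)$. For the easy inequality: every $\bm{\xi}_s\in\mathcal{OU}_s^{\bm{\xi}_s}(\bm{x},\bm{\lambda}_s)$ obeys the primal constraint \eqref{ou-1}, i.e. $\bm{H}_s\bm{\xi}_s\leq\bm{h}_s-\bm{F}_s\bm{x}$ with $\bm{\xi}_s\geq\bm{0}$, which is precisely the defining system of $\Xi_s(\bm{x})$ in \eqref{set-Xi}; hence $\Xi_s^*(\bm{x})=\bigcup_{\bm{\lambda}_s\in\mathcal{P}_s}\mathcal{OU}_s^{\bm{\xi}_s}(\bm{x},\bm{\lambda}_s)\subseteq\Xi_s(\bm{x})$, so maximizing over the smaller feasible set $\Xi_s^*(\bm{x})$ cannot exceed maximizing over $\Xi_s(\bm{x})$, giving ``$\geq$'' in the lemma.

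For the main inequality ``$\leq$'', first I would use LP strong duality on the inner $\min_{\bm{y}_s}$ to identify $\max_{\bm{\xi}_s\in\Xi_s(\bm{x})}\min_{\bm{y}_s\in\mathcal{Y}_s(\bm{x},\bm{\xi}_s)}\bm{d}_s^{\intercal}\bm{y}_s$ with the optimal value of the bilinear program \eqref{dual-2stage}; relatively complete recourse (Section~\ref{complete recourse}) and the boundedness of the DDU polytope $\Xi_s(\bm{x})$ are what make $\Pi_s$ nonempty and guarantee this maximum is attained. By Proposition~\ref{prop-1} there is an optimal pair $(\hat{\bm{\xi}}_s,\hat{\bm{\lambda}}_s)$ of \eqref{dual-2stage} with $\hat{\bm{\lambda}}_s$ an extreme point of $\Pi_s$ (so $\hat{\bm{\lambda}}_s\in\mathcal{P}_s$) and with $\hat{\bm{\xi}}_s$ also optimal for the original $\max$--$\min$. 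Next, holding $\hat{\bm{\lambda}}_s$ fixed, $\hat{\bm{\xi}}_s$ must maximize $(\bm{f}_s-\bm{G}_s\bm{x}-\bm{E}_s\bm{\xi}_s)^{\intercal}\hat{\bm{\lambda}}_s$ over $\bm{\xi}_s\in\Xi_s(\bm{x})$ --- otherwise the joint objective of \eqref{dual-2stage} could be strictly improved --- so, dropping the constant $(\bm{f}_s-\bm{G}_s\bm{x})^{\intercal}\hat{\bm{\lambda}}_s$, $\hat{\bm{\xi}}_s$ solves $\bm{\Psi}_s(\bm{x},\hat{\bm{\lambda}}_s)$. Since $\bm{\Psi}_s(\bm{x},\hat{\bm{\lambda}}_s)$ is a feasible, bounded LP, it admits optimal multipliers $\hat{\bm{\vartheta}}_s\geq\bm{0}$, and $(\hat{\bm{\xi}}_s,\hat{\bm{\vartheta}}_s)$ then satisfies exactly primal feasibility \eqref{ou-1}, dual feasibility \eqref{ou-2}, and the complementary-slackness identities \eqref{ou-3}--\eqref{ou-4}, i.e. $(\hat{\bm{\xi}}_s,\hat{\bm{\vartheta}}_s)\in\mathcal{OU}_s(\bm{x},\hat{\bm{\lambda}}_s)$, whence $\hat{\bm{\xi}}_s\in\mathcal{OU}_s^{\bm{\xi}_s}(\bm{x},\hat{\bm{\lambda}}_s)\subseteq\Xi_s^*(\bm{x})$. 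Because $\hat{\bm{\xi}}_s$ is a feasible point of the right-hand-side maximization and is optimal for the original $\max$--$\min$, we get $\max_{\bm{\xi}_s\in\Xi_s^*(\bm{x})}\min_{\bm{y}_s\in\mathcal{Y}_s(\bm{x},\bm{\xi}_s)}\bm{d}_s^{\intercal}\bm{y}_s\geq\min_{\bm{y}_s\in\mathcal{Y}_s(\bm{x},\hat{\bm{\xi}}_s)}\bm{d}_s^{\intercal}\bm{y}_s=\max_{\bm{\xi}_s\in\Xi_s(\bm{x})}\min_{\bm{y}_s\in\mathcal{Y}_s(\bm{x},\bm{\xi}_s)}\bm{d}_s^{\intercal}\bm{y}_s$, which is ``$\leq$''; combined with the easy inequality, equality holds.

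The step I expect to be the main obstacle is precisely the claim that $\mathcal{OU}_s^{\bm{\xi}_s}(\bm{x},\hat{\bm{\lambda}}_s)$ coincides with the optimal-solution set of $\bm{\Psi}_s(\bm{x},\hat{\bm{\lambda}}_s)$ --- equivalently, that Proposition~\ref{prop-1}'s maximizer $\hat{\bm{\xi}}_s$ genuinely sits inside the KKT system \eqref{ou-1}--\eqref{ou-4}. This needs the regularity of $\bm{\Psi}_s$ to be pinned down (feasibility, boundedness above, and hence existence of optimal dual multipliers, so that the KKT conditions are both necessary and sufficient for LP optimality), and it needs the careful logical step that $\hat{\bm{\lambda}}_s$ being part of a \emph{jointly} optimal pair of \eqref{dual-2stage} forces $\hat{\bm{\xi}}_s$ to be optimal for the single-level LP obtained once $\hat{\bm{\lambda}}_s$ is frozen. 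Everything else --- the containment $\Xi_s^*(\bm{x})\subseteq\Xi_s(\bm{x})$, the LP-duality rewriting of the inner minimization, and the final sandwiching of objective values --- is routine.
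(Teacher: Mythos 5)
Your proposal is correct and follows essentially the same route as the paper's own proof: the inclusion $\Xi_s^*(\bm{x})\subseteq\Xi_s(\bm{x})$ via the primal constraint \eqref{ou-1} gives one inequality, and Proposition \ref{prop-1} together with the KKT characterization of the optimal-solution set of $\bm{\Psi}_s(\bm{x},\hat{\bm{\lambda}}_s)$ places an optimal $\hat{\bm{\xi}}_s$ inside $\mathcal{OU}_s^{\bm{\xi}_s}(\bm{x},\hat{\bm{\lambda}}_s)$ for the reverse. You merely spell out details (strong duality, existence of dual multipliers, optimality of $\hat{\bm{\xi}}_s$ once $\hat{\bm{\lambda}}_s$ is frozen) that the paper leaves implicit in Proposition \ref{prop-1} and the definition of $\mathcal{OU}_s$.
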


\noindent\textit{Proof of Theorem \ref{thm-1}.}
By applying Lemma \ref{lemma-1}, \textbf{DDU-NHEMP} is equivalent to
\begin{align}
	&\min_{\bm{x}\in\mathcal{X}}~\bm{c}^{\intercal} \bm{x}+\sum_{s\in\mathcal{S}}\pi_s\max_{\bm{\xi}_s\in\Xi_s^*(\bm{x})}\min_{\bm{y}_s\in\mathcal{Y}_s(\bm{x},\bm{\xi}_s)}\bm{d}_s^{\intercal}\bm{y}_s \label{pf-1-1}.
\end{align}
Since $\Xi_s^*(\bm{x})$ is a union set of $\mathcal{OU}_s^{\bm{\xi}_s}(\bm{x},\bm{\lambda}_s)$'s according to each $\bm{\lambda}_s\in\mathcal{P}_s$, by enumerating the $\bm{\lambda}_s$'s of $\mathcal{P}_s$ for every $s\in\mathcal{S}$, problem \eqref{pf-1-1} can be converted to
\begin{align}
	\min~&\bm{c}^{\intercal} \bm{x}+\sum_{s\in\mathcal{S}}\pi_s\eta_s \label{pf-1-2}\\
	{\rm s.t.}~&\bm{x}\in\mathcal{X}\label{pf-1-3}\\
	&\nonumber\eta_s\geq \Big\{ \bm{d}_s^{\intercal}\bm{y}_s^{\bm{\lambda}_s}\;|\;\bm{y}_s^{\bm{\lambda}_s}\in\mathcal{Y}_s(\bm{x},\bm{\xi}_s^{\bm{\lambda}_s}),\\
	&\qquad\quad\bm{\xi}_s^{\bm{\lambda}_s}\in\mathcal{OU}_s^{\bm{\xi}_s}(\bm{x},\bm{\lambda}_s)\Big\} ,\; \forall \bm{\lambda}_s\in\mathcal{P}_s,\forall s\in\mathcal{S}\label{pf-1-4}.
\end{align}
Considering the projection relationship between $\mathcal{OU}_s^{\bm{\xi}_s}(\bm{x},\bm{\lambda}_s)$ and $\mathcal{OU}_s(\bm{x},\bm{\lambda}_s)$, \textbf{DDU-Single} can be readily obtained from problem \eqref{pf-1-2}--\eqref{pf-1-4}.
\end{proof}

In \textbf{DDU-Single}, we enumerate all extreme points of $\Pi_s$'s for each scenario $s$. However, the total number of the elements of $\mathcal{P}_s$'s, i.e., $\sum_{s\in\mathcal{S}}\left| \mathcal{P}_s\right|$, could be very large, rendering it impractical to solve \textbf{DDU-Single} directly. Hence, in the next subsection, we customize and develope the PC\&CG algorithm to achieve an exact and high-efficient solution to \textbf{DDU-Single}. 

\subsection{PC\&CG Decomposition}
PC\&CG decomposition holds a master-subproblem architecture. It constructs and conducts computation on a relaxation of \textbf{DDU-Single}, referred to as the master problem, following by iteratively generating valuable variables and constraints through a series of subproblems to strengthen this relaxation, until a convergence condition is met.

Specifically, in each iteration, PC\&CG begins with the master problem (\textbf{MP}) as in \eqref{MP-1}--\eqref{MP-8}. In constraints \eqref{MP-3}--\eqref{MP-5}, for each scenario $s$, $\hat{\mathcal{P}}_s$ is a subset of $\mathcal{P}_s$, i.e., $\hat{\mathcal{P}}_s\subseteq\mathcal{P}_s$. Built on $\hat{\mathcal{P}}_s$'s, \textbf{MP} provides a relaxation of \textbf{DDU-Single}.
\begin{align}
	\bm{{\rm MP}}&:~\min~\bm{c}^{\intercal} \bm{x}+\sum_{s\in\mathcal{S}}\pi_s\eta_s\label{MP-1}\\
	{\rm s.t.}~&A\bm{x}\leq\bm{b}\label{MP-2}\\
	&\eta_s\geq \bm{d}_s^{\intercal}\bm{y}_s^{\bm{\lambda}_s},\quad \forall\bm{\lambda}_s\in\hat{\mathcal{P}}_s,\forall s\in\mathcal{S}\label{MP-3}\\
	&\bm{B}_s\bm{y}_s^{\bm{\lambda}_s}\geq\bm{f}_s -\bm{G}_s\bm{x}-\bm{E}_s\bm{\xi}^{\bm{\lambda}_s},\quad \forall\bm{\lambda}_s\in\hat{\mathcal{P}}_s,\forall s\in\mathcal{S}\label{MP-4}\\
	&(\bm{\xi}_s^{\bm{\lambda}_s},\bm{\vartheta}_s^{\bm{\lambda}_s})\in\mathcal{OU}_s(\bm{x},\bm{\lambda}_s),\quad \forall\bm{\lambda}_s\in\hat{\mathcal{P}}_s,\forall s\in\mathcal{S}\label{MP-5}\\
	&\bm{x}\in\left\lbrace0,1 \right\rbrace^{n_{\rm x_1}}\times \mathbb{N}^{n_{\rm x_2}},\bm{y}_s^{\bm{\lambda}_s}\in\mathbb{R}^{n_{\rm y}}_+\label{MP-8}
\end{align}
Collecting the optimal $(\hat{\bm{x}},\hat{\eta}_1,\cdots,\hat{\eta}_{\left|\mathcal{S}\right|})$ from \textbf{MP}, a lower bound (LB) of \textbf{DDU-Single} can be obtained as:
\begin{align}
	LB=\bm{c}^{\intercal}\hat{\bm{x}}+\sum_{s\in\mathcal{S}}\pi_s\hat{\eta}_s.
\end{align}

With the derived upper-level solution $\hat{\bm{x}}$ from \textbf{MP}, in order to strengthen \textbf{MP}, $\left| \mathcal{S}\right| $ subproblems (\textbf{SP}$_s$'s) are defined as:
\begin{align}
	\bm{{\rm SP}}_s:\;
\max_{\bm{\xi}_s\in\Xi_s(\hat{\bm{x}})}\min_{\substack{\bm{y}_s\in\mathcal{Y}_s(\hat{\bm{x}},\bm{\xi}_s)}}\bm{d}_s^{\intercal}\bm{y}_s, \quad s\in\mathcal{S}.\label{SP2}
\end{align}
Note that \textbf{SP}$_s$ is always feasible due to the relatively complete recourse property as indicated in Section \ref{complete recourse}. Additionally, we mention that \textbf{SP}$_s$ is a bilevel LP in the $\max-\min$ form. By employing the KKT conditions of its lower-level minimization problem, \textbf{SP}$_s$ can be converted to a single-level problem as follows:
\begin{align}
	\max~&\bm{d}_s^{\intercal}\bm{y}_s\label{kkt-1}\\
	{\rm s.t.}~&\bm{H}_s\bm{\xi}_s\leq\bm{h}_s-\bm{F}_s\hat{\bm{x}}\label{kkt-2}\\
	&\bm{B}_s\bm{y}_s+\bm{E}_s\bm{\xi}_s\geq\bm{f}_s -\bm{G}_s\hat{\bm{x}}\label{kkt-3}\\
	&\bm{d}_s-\bm{B}_s^{\intercal}\bm{\lambda}_s\geq\bm{0}\label{kkt-4}\\
	&\bm{\lambda}_s\circ(\bm{B}_s\bm{y}_s+\bm{E}_s\bm{\xi}_s-\bm{f}_s +\bm{G}_s\hat{\bm{x}})=\bm{0}\label{kkt-5}\\
	&\bm{y}_s\circ (\bm{d}_s-\bm{B}_s^{\intercal}\bm{\lambda}_s)=\bm{0}\label{kkt-5-1}\\
	&\bm{\xi}_s\in\mathbb{R}_+^{n_{\rm \xi}},\bm{y}_s\in\mathbb{R}_+^{n_{\rm y}},\bm{\lambda}_s\in\mathbb{R}_+^{m_{\rm y}}\label{kkt-6},
\end{align}
where \eqref{kkt-3} and \eqref{kkt-4} are, respectively, primal and dual constraints of the lower-level problem of \textbf{SP}$_s$, and \eqref{kkt-5} and \eqref{kkt-5-1} are complementarity constraints.

By solving \textbf{SP}$_s$'s, the optimal lower-level variables and extreme points of $\Pi_s$'s, i.e.,  $(\hat{\bm{y}}_1,\hat{\bm{\lambda}}_1,\cdots,\hat{\bm{y}}_{\left|\mathcal{S}\right|},\hat{\bm{\lambda}}_{\left|\mathcal{S}\right|} )$, can be derived. Then, for each $s$, we will update $\hat{\mathcal{P}}_s\leftarrow\hat{\mathcal{P}}_s\cup\{\hat{\bm{\lambda}}_s\}$, create new variables $\bm{y}_s^{\hat{\bm{\lambda}}_s}$, $\bm{\xi}_s^{\hat{\bm{\lambda}}_s}$ and $\bm{\vartheta}_s^{\hat{\bm{\lambda}}_s}$, and add new constraints  \eqref{Add-1}--\eqref{Add-3} as an \textit{optimality cut} to \textbf{MP}, so as to strengthen the \textbf{MP}.
\begin{align}
	&\eta_s\geq \bm{d}_s^{\intercal}\bm{y}_s^{\hat{\bm{\lambda}}_s},\bm{y}_s^{\hat{\bm{\lambda}}_s}\in\mathbb{R}^{n_{\rm y}}_+,\quad s\in\mathcal{S}\label{Add-1}\\
	&\bm{B}_s\bm{y}_s^{\hat{\bm{\lambda}}_s}\geq\bm{f}_s -\bm{G}_s\bm{x}-\bm{E}_s\bm{\xi}_s^{\hat{\bm{\lambda}}_s},\quad s\in\mathcal{S}\label{Add-2}\\
	&(\bm{\xi}_s^{\hat{\bm{\lambda}}_s},\bm{\vartheta}_s^{\hat{\bm{\lambda}}_s})\in\mathcal{OU}_s(\bm{x},\hat{\bm{\lambda}}_s),\quad s\in\mathcal{S}\label{Add-3}
\end{align}
Meanwhile, since the $\hat{\bm{x}}$ from \textbf{MP} and the $(\hat{\bm{y}}_1,\cdots,\hat{\bm{y}}_{\left|\mathcal{S}\right|})$ from \textbf{SP}$_s$'s constitute a feasible solution of \textbf{DDU-NHEMP}, the upper bound (UB) of \textbf{DDU-Single} can be updated:
\begin{align}
	&UB=\min\left\lbrace UB,~ \bm{c}^{\intercal}\hat{\bm{x}}+\sum_{s\in\mathcal{S}}\pi_s\bm{d}_s^{\intercal}\hat{\bm{y}}_s\right\rbrace .\label{def-ub}
\end{align}

\begin{remark}
	Complementary slackness conditions \eqref{ou-3} and \eqref{ou-4} in $\mathcal{OU}_s(\bm{x},\bm{\lambda}_s)$ of \textbf{MP} and \eqref{kkt-5} and \eqref{kkt-5-1} of \textbf{SP}$_s$'s exhibit a bilinear structure, which renders difficulty in problem solution. To alleviate the computational burden, we use the so-called ``big-M'' method \cite{ding2016two,6812211} to make the linearization. For example, for the $j$-th constraint of \eqref{kkt-5}, i.e., 
	\begin{eqnarray}
		&&\lambda^j_s(\bm{B}\bm{y}_s+\bm{E}_s\bm{\xi}_s-\bm{f}_s +\bm{G}_s\hat{\bm{x}})^j=0,\label{complement-example}
	\end{eqnarray}
	it can equivalently be replaced by the following linear constraints:
	\begin{eqnarray}
		&&\lambda_s^j\leq Mw_s^j,\label{kkt-7}\\
		&&(\bm{B}\bm{y}_s+\bm{E}_s\bm{\xi}_s-\bm{f}_s +\bm{G}_s\hat{\bm{x}})^j\leq M(1-w_s^j),\label{kkt-8}
	\end{eqnarray}
	where $M$ is a sufficiently large number, e.g., $10^8$, and $w_s^j$ is a binary variable. Taking \eqref{kkt-7} and \eqref{kkt-8} together with \eqref{kkt-3} and the nonnegativity of $\lambda_s^j$ in \eqref{kkt-6}, when $\omega_s^j=0$, we have $\lambda_s^j=0$ and \eqref{kkt-8} is relaxed; when $\omega_s^j=1$, we have $(\bm{B}\bm{y}_s+\bm{E}_s\bm{\xi}_s-\bm{f}_s +\bm{G}_s\hat{\bm{x}})^j=0$ and \eqref{kkt-7} is relaxed. So, the same effect of the complementary constraint \eqref{complement-example} can be achieved by linear constraints \eqref{kkt-7} and \eqref{kkt-8} with binary variable $\omega_s^j$. By adopting such a method to the complementarity constraints, the resulting \textbf{MP} and \textbf{SP}$_s$'s are all mixed-integer linear programs (MILPs), which can be directly solved by the off-the-shelf solvers, e.g., Gurobi or CPLEX.
\end{remark}

Finally, PC\&CG will be terminated if the relative gap ($\frac{UB-LB}{\left| LB\right|}$) is not larger than a predefined tolerance level $\varepsilon$, i.e., $\frac{UB-LB}{\left| LB\right|}\leq\varepsilon$. The overall flow of PC\&CG is described in Algorithm \ref{alg}, and its finite convergence result is established in Theorem \ref{thm-2}.
\begin{algorithm}[]

	\caption{Parametric Column-and-Constraint Generation}

	\begin{algorithmic}[1]

		\STATE {\bf Initialization:} Set $LB\leftarrow-\infty$, $UB\leftarrow+\infty$,  and $\varepsilon=0$; Initial $\hat{\mathcal{P}}_s$'s.

		\WHILE  {$\frac{UB-LB}{\left| LB\right| } > \varepsilon$}

		\STATE Solve master problem ${\bf MP}$.

		\IF {${\bf MP}$ is infeasible}

		\STATE Terminate the algorithm and \emph{report infeasibility}.
		\ELSE

		\STATE Derive the optimal solution $(\hat{\bm{x}},\hat{\eta}_1,\cdots,\hat{\eta}_{\left|\mathcal{S}\right|})$;
		\STATE Update $LB\leftarrow \bm{c}^{\intercal}\hat{\bm{x}}+\sum_{s\in\mathcal{S}}\pi_s\hat{\eta}_s$.
		\ENDIF

		\FOR {$s=1,\cdots,\left|\mathcal{S}\right|$}
		\STATE Solve subproblem ${\bf SP}_s$;
		
		\STATE Obtain the optimal solution $(\hat{\bm{y}}_1,\hat{\bm{\lambda}}_1,\cdots,\hat{\bm{y}}_{\left|\mathcal{S}\right|},\hat{\bm{\lambda}}_{\left|\mathcal{S}\right|} )$;
		\STATE Update $\hat{\mathcal{P}}_s\leftarrow\hat{\mathcal{P}}_s\cup\{\hat{\bm{\lambda}}_s\}$; Create variables $\bm{y}_s^{\hat{\bm{\lambda}}_s}$, $\bm{\xi}_s^{\hat{\bm{\lambda}}_s}$ and $\bm{\vartheta}_s^{\hat{\bm{\lambda}}_s}$; Add constraints \eqref{Add-1}-\eqref{Add-3} to ${\bf MP}$.
		\ENDFOR

		\STATE Update $UB\leftarrow\min\left\lbrace UB,~ \bm{c}^{\intercal}\hat{\bm{x}}+\sum_{s\in\mathcal{S}}\pi_s\bm{d}_s^{\intercal}\hat{\bm{y}}_s\right\rbrace $.

		\ENDWHILE

		\STATE {\bf Output:} Return the solution of \textbf{DDU-NHEMP} $\bm{x}^{\star}=\hat{\bm{x}}$.

	\end{algorithmic}

	\label{alg}%

\end{algorithm}

\begin{thm}\label{thm-2}
	1) PC\&CG will converge to the global optimum ($\varepsilon=0$) of \textbf{DDU-Single} in a finite number of iterations.
	
	2) The number of iterations before PC\&CG terminates is bounded by ${\binom{n_{\rm \xi}+m_{\rm \xi}}{m_{\rm \xi}}}^{|\mathcal{S}|}$, where $\binom{\cdot}{\cdot}$ represents the combinatorial number and
	\begin{align}
		\binom{n_{\rm \xi}+m_{\rm \xi}}{m_{\rm \xi}}=\frac{(n_{\rm \xi}+m_{\rm \xi})!}{m_{\rm \xi}!\cdot n_{\rm \xi}!},
	\end{align}
	and the iteration complexity of PC\&CG is $\mathcal{O}\left({\binom{n_{\rm \xi}+m_{\rm \xi}}{m_{\rm \xi}}}^{|\mathcal{S}|}\right)$.
\end{thm}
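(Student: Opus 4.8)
The plan is to establish the two parts of Theorem~\ref{thm-2} separately, relying on the master–subproblem structure of PC\&CG and the finiteness of the extreme-point sets $\mathcal{P}_s$. For part~1), I would first argue correctness of the bounds: since $\hat{\mathcal{P}}_s\subseteq\mathcal{P}_s$ for every $s$, \textbf{MP} is a relaxation of \textbf{DDU-Single}, so $LB\leq\Phi$ at every iteration; and since $(\hat{\bm{x}},\hat{\bm{y}}_1,\dots,\hat{\bm{y}}_{|\mathcal{S}|})$ is feasible to \textbf{DDU-NHEMP} by the relatively complete recourse property (Section~\ref{complete recourse}), the quantity in \eqref{def-ub} is a valid upper bound, so $UB\geq\Phi$. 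Hence $LB\leq\Phi\leq UB$ throughout, and if the algorithm terminates with $UB=LB$ (i.e.\ $\varepsilon=0$), the returned $\bm{x}^\star$ is globally optimal. It remains to show termination happens after finitely many iterations.

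For finite termination, the key structural fact is that each subproblem \textbf{SP}$_s$, being a bilinear/bilevel LP, admits an optimal solution in which $\hat{\bm{\lambda}}_s$ is an extreme point of $\Pi_s$ (this is exactly the content used in Proposition~\ref{prop-1} and its surrounding discussion). The plan is then a no-repetition argument: I claim that if at some iteration the solution $\hat{\bm{x}}$ of \textbf{MP} satisfies $\hat{\bm{\lambda}}_s\in\hat{\mathcal{P}}_s$ for \emph{every} $s\in\mathcal{S}$, then the algorithm terminates at that iteration. Indeed, if every newly generated $\hat{\bm{\lambda}}_s$ already lies in $\hat{\mathcal{P}}_s$, then the corresponding optimality-cut constraints \eqref{Add-1}--\eqref{Add-3} are already present in \textbf{MP}; consequently the inner value $\hat\eta_s$ computed by \textbf{MP} for that $\hat{\bm{x}}$ already dominates $\max_{\bm{\xi}_s\in\Xi_s(\hat{\bm{x}})}\min_{\bm{y}_s}\bm{d}_s^\intercal\bm{y}_s$ — here I would invoke Lemma~\ref{lemma-1} and the definition of $\mathcal{OU}_s$ to identify the objective of \textbf{SP}$_s$ with the value enforced by the cut indexed by $\hat{\bm\lambda}_s$ — so $\hat\eta_s=\bm{d}_s^\intercal\hat{\bm{y}}_s$, giving $LB=\bm{c}^\intercal\hat{\bm{x}}+\sum_s\pi_s\hat\eta_s = \bm{c}^\intercal\hat{\bm{x}}+\sum_s\pi_s\bm{d}_s^\intercal\hat{\bm{y}}_s\geq UB$, hence $UB=LB$. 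Therefore, as long as the algorithm has not terminated, at least one scenario contributes a genuinely new extreme point $\hat{\bm{\lambda}}_s\notin\hat{\mathcal{P}}_s$, so the tuple $(\hat{\mathcal{P}}_1,\dots,\hat{\mathcal{P}}_{|\mathcal{S}|})$ strictly grows (in the product-set sense) each iteration. Since $\hat{\mathcal{P}}_s\subseteq\mathcal{P}_s$ and each $\mathcal{P}_s$ is finite, this can happen only finitely often.

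For part~2), I would bound $|\mathcal{P}_s|$. Rather than bounding extreme points of $\Pi_s$ directly, the cleaner route suggested by the statement is to note that what actually indexes distinct cuts is the \emph{solution set} $\mathcal{OU}_s$, equivalently the KKT/active-set structure: each generated $\hat{\bm\xi}_s$ can be taken to be an extreme point of $\Xi_s(\hat{\bm x})$ by Proposition~\ref{prop-1}, and an extreme point of a polyhedron in $\mathbb{R}^{n_\xi}$ defined by $m_\xi$ inequalities (plus nonnegativity) is determined by which of the $n_\xi+m_\xi$ total inequalities are tight; choosing $m_\xi$ of them to be active gives at most $\binom{n_\xi+m_\xi}{m_\xi}$ candidates per scenario. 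Combining this per-scenario bound across the $|\mathcal{S}|$ independent scenario index sets, the product set $\prod_s\hat{\mathcal{P}}_s$ has at most $\binom{n_\xi+m_\xi}{m_\xi}^{|\mathcal{S}|}$ elements, and since it strictly grows until termination (part~1), the iteration count is bounded by this quantity, yielding the complexity $\mathcal{O}\!\left(\binom{n_\xi+m_\xi}{m_\xi}^{|\mathcal{S}|}\right)$.

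I expect the main obstacle to be making the termination criterion airtight: specifically, proving rigorously that "all newly generated $\hat{\bm\lambda}_s$ are already in $\hat{\mathcal{P}}_s$" forces $UB=LB$. This requires carefully showing that the cut associated with a given $\bm\lambda_s\in\hat{\mathcal{P}}_s$ in \textbf{MP} enforces $\eta_s\geq\max_{\bm\xi_s\in\mathcal{OU}_s^{\bm\xi_s}(\hat{\bm x},\bm\lambda_s)}\min_{\bm y_s\in\mathcal{Y}_s(\hat{\bm x},\bm\xi_s)}\bm d_s^\intercal\bm y_s$ for that specific $\hat{\bm x}$ — so that, via Lemma~\ref{lemma-1}'s union-over-$\mathcal{P}_s$ representation of $\Xi_s^*(\hat{\bm x})$, the set of present cuts collectively forces $\eta_s\geq$ the true \textbf{SP}$_s$ value — together with a subtlety about whether \textbf{SP}$_s$'s optimal $\hat{\bm\lambda}_s$ is the "right" extreme point when ties or degeneracy occur. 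A secondary subtlety is justifying that one may always \emph{select} an extreme-point optimal $\hat{\bm\lambda}_s$ (and extreme-point $\hat{\bm\xi}_s$) from the solver output, i.e.\ that the algorithm is stated with such a selection rule; this is exactly where Proposition~\ref{prop-1} is invoked, and I would state it as a standing assumption on how \textbf{SP}$_s$ is solved.
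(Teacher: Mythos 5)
Your overall architecture (MP is a relaxation so $LB\leq\Phi$, the SP solutions give feasible recourse so $UB\geq\Phi$, and a no-repetition argument yields finiteness) is sound, and your termination criterion for part 1 can in fact be made airtight: if the newly generated $\hat{\bm{\lambda}}_s$ already lies in $\hat{\mathcal{P}}_s$, then \emph{any} point of $\mathcal{OU}_s^{\bm{\xi}_s}(\hat{\bm{x}},\hat{\bm{\lambda}}_s)$ chosen by \textbf{MP} attains the worst-case recourse value at $\hat{\bm{x}}$ (by weak duality, since $\hat{\bm{\lambda}}_s$ is dual-optimal at the current $\hat{\bm{x}}$, every maximizer of $(-\bm{E}_s\bm{\xi}_s)^{\intercal}\hat{\bm{\lambda}}_s$ over $\Xi_s(\hat{\bm{x}})$ achieves the \textbf{SP}$_s$ value), so the degeneracy worry you flag is resolvable and $UB=LB$ follows. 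However, this route proves finite convergence with an iteration bound of $\prod_{s}|\mathcal{P}_s|$, i.e., a count of extreme points of the dual polyhedra $\Pi_s\subset\mathbb{R}^{m_{\rm y}}$ governed by $(n_{\rm y},m_{\rm y})$ — \emph{not} the bound ${\binom{n_{\rm \xi}+m_{\rm \xi}}{m_{\rm \xi}}}^{|\mathcal{S}|}$ claimed in part 2. Your attempt to convert the count is where the genuine gap lies: you conflate $\hat{\mathcal{P}}_s$ (sets of $\bm{\lambda}_s$'s) with active-set patterns of $\Xi_s(\hat{\bm{x}})$. There is no reason $|\hat{\mathcal{P}}_s|$ is bounded by $\binom{n_{\rm \xi}+m_{\rm \xi}}{m_{\rm \xi}}$ — many distinct dual extreme points can be generated per uncertainty-side active set — and the extreme points of $\Xi_s(\bm{x})$ themselves move with $\bm{x}$ (this is precisely the DDU difficulty), so "finitely many candidate $\hat{\bm{\xi}}_s$ per scenario" is not a well-defined invariant across iterations in your argument.

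The paper closes exactly this hole by keying the no-repetition argument on a quantity that is invariant: it rewrites $\bm{\Psi}_s(\bm{x},\bm{\lambda}_s)$ in standard form with slack variables, so the candidate objects are the bases of $\widetilde{\bm{H}}_s$ (an $m_{\rm \xi}\times(n_{\rm \xi}+m_{\rm \xi})$ matrix), which do not depend on $\bm{x}$ or $\bm{\lambda}_s$ and number at most $\binom{n_{\rm \xi}+m_{\rm \xi}}{m_{\rm \xi}}$. Lemma \ref{lemma-2} shows, via reduced costs that are independent of $\bm{x}$, that an optimal basis at $\bm{x}'$ stays optimal (and uniquely so) at any $\bm{x}''$ where it remains feasible; Claim \ref{cla-1} then shows that if the same \emph{basis tuple} $(\mathfrak{B}_1,\dots,\mathfrak{B}_{|\mathcal{S}|})$ recurs at two iterations, the cut generated at the earlier iteration already pins $\bm{\xi}_s^{\hat{\bm{\lambda}}_s}$ to the current worst-case point, forcing $UB=LB$; non-unique optima are handled by a perturbation of the cost vector $\mathfrak{e}_s^{\bm{\lambda}_s}$ together with a modified $\mathcal{OU}_s$. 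That basis-counting step is what delivers the stated bound ${\binom{n_{\rm \xi}+m_{\rm \xi}}{m_{\rm \xi}}}^{|\mathcal{S}|}$; to repair your proof of part 2 you would need to replace repetition of $\hat{\bm{\lambda}}_s$ by repetition of the optimal active-set/basis pattern of $\widetilde{\Xi}_s(\bm{x})$ and prove the corresponding "same basis tuple $\Rightarrow$ termination" claim, which your sketch does not do.
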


\begin{proof}
	See the proof in Appendix \ref{appen-2}.
\end{proof}

\begin{remark}
	1) Compared to the basic C\&CG \cite{zeng2013solving}, PC\&CG holds similar structure. It iteratively generates new variables $\bm{y}_s^{\hat{\bm{\lambda}}_s}$, $\bm{\xi}_s^{\hat{\bm{\lambda}}_s}$ and $\bm{\vartheta}_s^{\hat{\bm{\lambda}}_s}$ (corresponds to ``columns'') and constraints \eqref{Add-1}--\eqref{Add-3} to strengthen \textbf{MP}. Specifically, it introduces valuable extreme point $\bm{\xi}_s^{\hat{\bm{\lambda}}_s}$ of $\Xi_s(\bm{x})$ parametrically through $\mathcal{OU}_s(\bm{x},\hat{\bm{\lambda}}_s)$ in \eqref{Add-3}, and replicates the corresponding lower-level problem in \eqref{Add-1} and \eqref{Add-2}. Hence, the proposed algorithm is termed ``parametric column-and-constraint generation''.

		2) The $\hat{\mathcal{P}}_s$'s can be initialized through certain  strategies or domain expertise. For example, we can obtain initial $\bm{\lambda}_s$'s by solving \textbf{SP}$_s$'s with a trial $\hat{\bm{x}}\in\mathcal{X}$, which can be given by experts. Moreover, for the sake of simplicity, initial $\hat{\mathcal{P}}_s$'s can be directly chosen as empty sets.
	
	3) In real life, when $\frac{UB-LB}{\left| LB\right|}$ is small, the associated solution is good enough for the practical systems. Hence, rather than taking an extremely long time to derive exact solutions, we can choose a small $\varepsilon$, e.g., 0.1\%, as the optimality tolerance, which terminates the algorithm whenever $\frac{UB-LB}{\left| LB\right|}\leq \varepsilon$ and hence mitigates the computational burden. Note that when $\varepsilon>0$,  $\bm{x}^{\star}$ corresponding to the upper bound upon termination is output as the optimal solution.

	4) For each $s\in\mathcal{S}$, the corresponding \textbf{SP}$_s$ is independent of others. Therefore, it provides us with the potential to perform parallelization in practice, so as to improve the computational performance of PC\&CG.
\end{remark}

\section{Case Study}\label{case}
\subsection{Description}
Numerical tests have been performed on a 33-bus exemplary distribution network \cite{IEEE-33}, as shown in Fig. \ref{33-bus}, to validate the proposed \textbf{DDU-NHEMP} model and the PC\&CG algorithm. The PDN is partitioned into three refueling zones, termed A, B, and C. Parameters of candidate components in hydrogen-electrical microgrids are provided in Tables \ref{tab:electrical} and \ref{tab:hydrogen-subsystem} \cite{hydro11,dis1,osti_1876290,SUN2024111125}. Further, the hydrogen procurement and retail prices are taken as 8\$/kg and 9.304\$/kg, respectively \cite{hydro3,hydro11}.  For every PDN node, the upper ($U_i^{\max}$) and lower ($U_i^{\min}$) bounds of nodal voltage are 10.7kV and 9.3kV, respectively.
\begin{figure}[]
	\centering
	\includegraphics[width=0.47\textwidth]{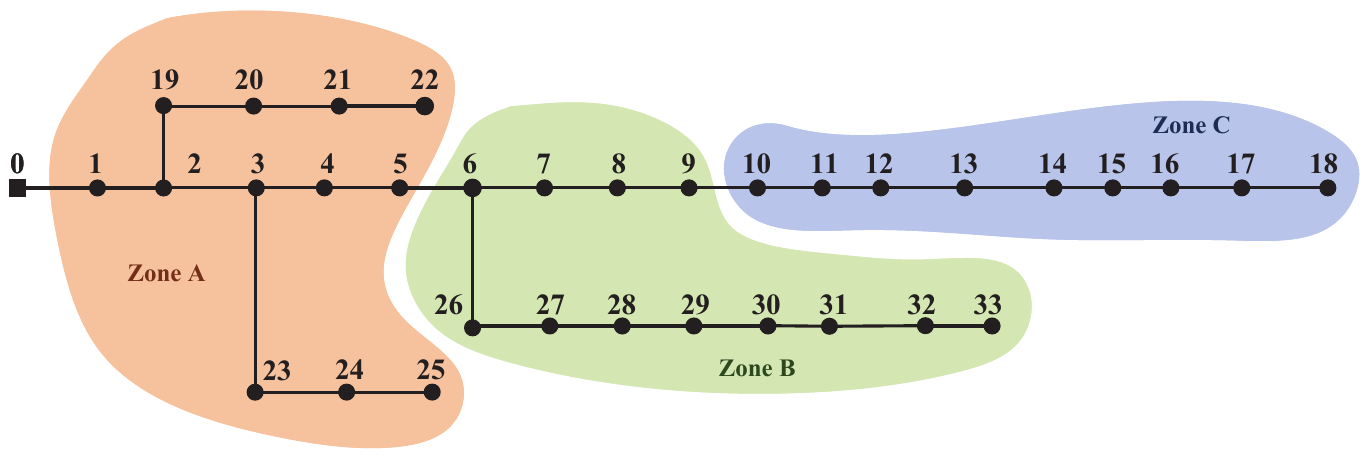}
	\caption{Topology and zone partitioning of 33-bus exemplary network.}
	\label{33-bus}
\end{figure}

\begin{table}[]
  \centering
  \setlength{\tabcolsep}{1mm}{
  \caption{Major Parameters of Components in Electrical Subsystem}
   \label{tab:electrical}
    \begin{tabular}{ccc}
    \toprule
    PV array & WT    & BB \\
    \midrule
    \makecell[c]{$c_{\rm pv}^{\rm i\&m}=153.67$\$/kW/yr\\
$\overline{P}_{\rm pv}=80$kW} & \makecell[c]{$c_{\rm wt}^{\rm i\&m}=225.79$\$/kW/yr\\
$\overline{P}_{\rm wt}=200$kW} & \makecell[c]{$c_{\rm bb}^{\rm i\&m}=51.76$\$/kW/yr\\
$\kappa_{\rm bb}=0.001$\$/kW\\
$\overline{P}_{\rm bb}=90$kW\\
$\overline{E}_{\rm bb}=150$kWh\\
$\eta_{\rm bb}^{\rm ch}=\eta_{\rm bb}^{\rm dis}=90$\%\\
$DoD_{\rm bb}=85$\%} \\
    \bottomrule
    \end{tabular}%
}
\end{table}%

\begin{table}[]
  \centering
  \caption{Major Parameters of Components in Hydrogen Subsystem}
  \label{tab:hydrogen-subsystem}
  \scalebox{0.95}{
    \begin{tabular}{ccc}
    \toprule
    ELZ   & HT    & HD \\
    \midrule
    \makecell[c]{$c_{\rm elz}^{\rm i\&m}=41.05$\$/kW/yr \\
$\overline{P}_{\rm elz}=200$kW \\
$\eta_{\rm elz}=76$\% \\
$LHV_{\rm H_2}=33.33$kW/kg} & \makecell[c]{$c_{\rm ht}^{\rm i\&m}=56.76$\$/kg/yr  \\
$\overline{P}_{\rm ht}=100$kg \\
$\phi_{\rm ht}=2$\%} & \makecell[c]{$c_{\rm hd}^{\rm i\&m}=29974.55$\$/yr \\ 
$SR=108$kg/h} \\
    \bottomrule
    \end{tabular}%
}
\end{table}%

\begin{table}[]
  \centering
  \caption{Induced Coefficients of DDU set}
  \setlength{\tabcolsep}{5mm}{
  	\scalebox{0.75}{
  	\renewcommand{\arraystretch}{1.4}
    \begin{tabular}{ccccccc}
    \toprule
    \multirow{2}[2]{*}{Coefficient} & \multicolumn{6}{c}{Time Period (h)} \\
\cmidrule{2-7}          & 1-4   & 5-8   & 9-12  & 13-16 & 17-20 & 21-24 \\
    \midrule
    $\overline{\gamma}_{A}^t$ (kg/h) & 30    & 40    & 60    & 60    & 40    & 30 \\
    $\underline{\gamma}_{A}^t$ (kg/h) & 25    & 30    & 40    & 40    & 35    & 25 \\
    \midrule
    $\overline{\gamma}_{B}^t$ (kg/h) & 25    & 25    & 45    & 45    & 35    & 30 \\
    $\underline{\gamma}_{B}^t$ (kg/h) & 20    & 20    & 35    & 35    & 25    & 20 \\
    \midrule
    $\overline{\gamma}_{C}^t$ (kg/h) & 20    & 30    & 40    & 40    & 30    & 20 \\
    $\underline{\gamma}_{C}^t$ (kg/h) & 15    & 25    & 30    & 30    & 25    & 15 \\
    \bottomrule
    \end{tabular}%
}
}
  \label{tab:ddu-parameter}%
\end{table}%

Table \ref{tab:ddu-parameter} shows the DDU coefficients $\overline{\gamma}_z^t$ and $\underline{\gamma}_z^t$ of each zone, and we have chosen $\overline{\alpha}^t=\sum_{z\in\mathcal{Z}}\overline{\gamma}_z^{t}/\left|\mathcal{Z} \right| $ and $\underline{\alpha}^t=\sum_{z\in\mathcal{Z}}\underline{\gamma}_z^{t}/\left|\mathcal{Z} \right| $. The DIU factors, i.e., the variations of RES outputs and electric loads, can be described using well-defined probability distributions \cite{graham1990method,zhang2010simulation,du2018scenario}. A hybrid Latin hypercube sampling and $k$-means clustering approach \cite{4808223,1017616} has been leveraged for typical scenario generation, which helps balance the computational efficiency and accuracy.

Our tests have been implemented on a mobile workstation with Intel(R) Core(TM) i9-13900H processor and 16GB RAM. The PC\&CG algorithm was implemented by MATLAB R2023b with YALMIP Toolbox \cite{Lofberg2004} and Gurobi 11.0 solver \cite{gurobi1}. The ``big-M''s for \textbf{MP} and \textbf{SP}$_s$ are respectively set to $10^{6}$ and $10^9$, the termination tolerance $\varepsilon$ is chosen as 0.1\%, and the initial $\hat{\mathcal{P}_s}$'s are chosen as empty sets. The algorithm parallelization is realized by using the Parallel Computing Toolbox in MATLAB.

\subsection{Results of NHEMP}\label{result}
Cases with the same set of three stochastic scenarios are studied below. 
\begin{itemize}
	\item \textit{Case 1:} Deploy hydrogen-electrical microgrids without considering DIE by employing static DIU sets for refueling demand.
	\item \textit{Case 2:} Deploy hydrogen-electrical microgrids considering DIE by employing dynamic DDU sets for refueling demand.
\end{itemize}
By comparing these two cases, we have investigated the necessity and impact of the investment-induced refueling demand in the context of NHEMP.
\begin{figure}[]
	\centering
	\subfloat[Case 1 without DIE: DIU modeling.]{
		\label{scheme-1}
		\includegraphics[width=0.48\textwidth]{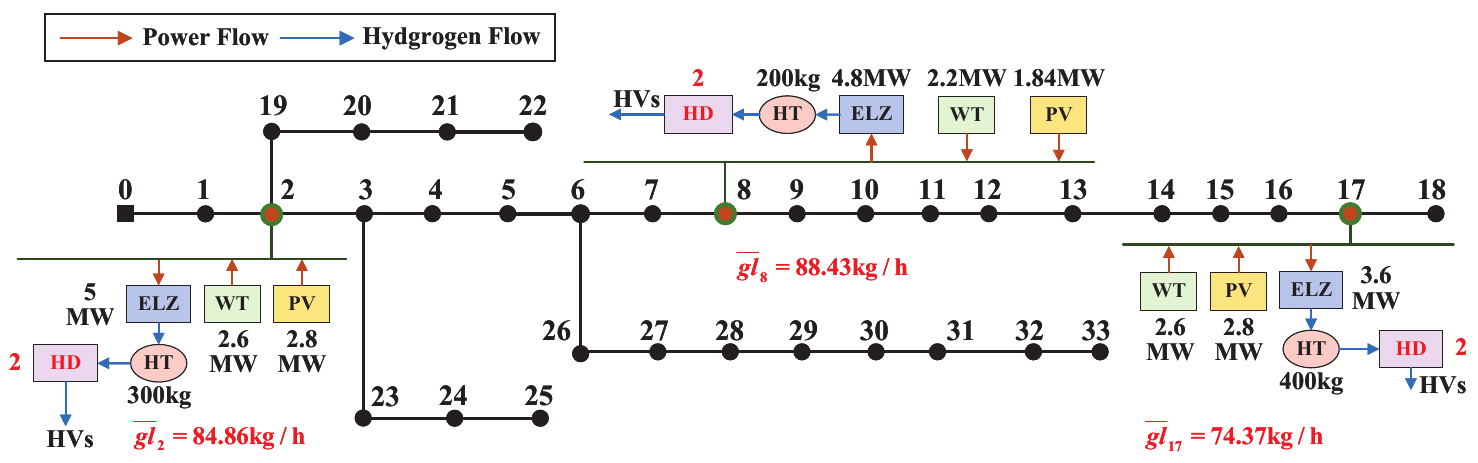}
	}\\
	\subfloat[Case 2 with DIE: DDU modeling.]{
		\label{scheme-2}
		\includegraphics[width=0.48\textwidth]{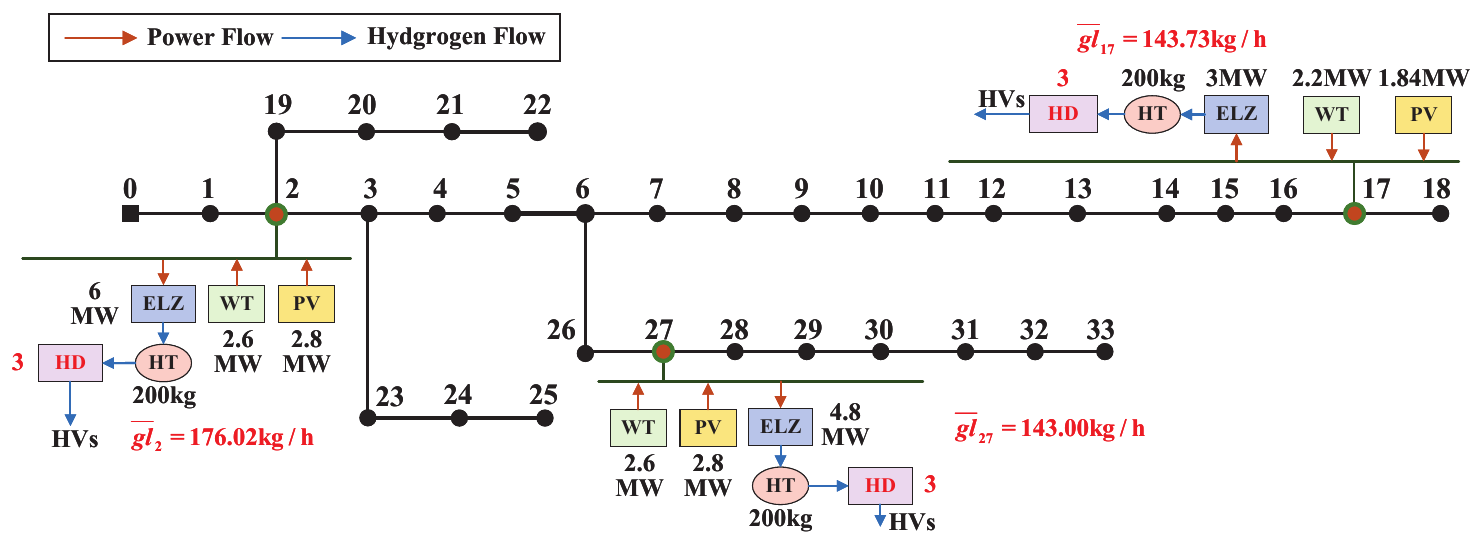}
	}
	\caption{Networked hydrogen-electrical microgrids planning results.}
	\label{scheme}
\end{figure}
 
Fig. \ref{scheme} depicts the planning results of these two cases, and the average values of nodal met refueling demand ($\overline{gl}_{i}$) are also exhibited, where $\overline{gl}_{i}$ can be calculated by
\begin{align}
	\overline{gl}_i=\frac{\sum_{s\in\mathcal{S}}\sum_{t\in\mathcal{T}}gl_i^{s,t}}{|\mathcal{S}||\mathcal{T}|}, \quad \forall i\in\Lambda.\label{}
\end{align}
It can be seen that the number of HDs are, respectively, 6 and 9 for Case 1 and Case 2, and the corresponding average met refueling demand over the entire system ($\overline{gl}$), which can be defined as
\begin{align}
	\overline{gl}=\sum_{i\in\Lambda}\overline{gl}_i,\label{}
\end{align}
are 247.66kg/h and 453.75kg/h, respectively. Also, a hydrogen-electrical microgrid at Node 8 (for Case 1) is switched to Node 27 (for Case 2) with a larger resource capacity. Compared to Case 1, the $\overline{gl}$ of Case 2 has an increase of 206.09kg/h, clearly reflecting the demand-inducing effect. It can be explained that the deployment of HDs induces the growth of refueling demand, which in turn fosters the additional investments of hydrogen-electrical microgrids. Eventually, an equilibrium state between the supply and demand sides is reached. 

\begin{table}[]
	\centering
	\caption{Economic-Benefits Assessment of NHEMP}
	\setlength{\tabcolsep}{2mm}{
		\begin{tabular}{cccccc}
			\toprule
			\multirow{2}[0]{*}{Index} & \multirow{2}[0]{*}{\makecell[c]{$\Phi$\\
					(million\$)}} & \multirow{2}[0]{*}{\makecell[c]{$\Phi^{\rm capex}$\\
					(million\$)}} & \multirow{2}[0]{*}{\makecell[c]{$\Phi^{\rm o\&m}$\\
					(million\$)}}  & \multirow{2}[0]{*}{\makecell[c]{$V_{\rm DIE}$\\
					(million\$)}} & \multirow{2}[0]{*}{$\overline{V}_{\rm DIE}$} \\
			&       &       &       &          &  \\
			\hline
			Case 1 & -10.63 & 3.76  & -14.40      & \textbackslash{}     & \textbackslash{} \\
			Case 1\# & -12.41 & 3.76  & -16.17      & \textbackslash{}     & \textbackslash{} \\
			Case 2 & -13.01 & 3.85  & -16.86      & 0.6     & 4.83\% \\
			\bottomrule
		\end{tabular}%
	}
	\label{economic}%
\end{table}%

Next, we have investigated the economic benefits of microgrids deployment. As shown in Table \ref{economic}, three annualized indices, i.e., ASE ($\Phi$), CapEx ($\Phi^{\rm capex}$), and summation of OPEX and maintenance cost ($\Phi^{\rm o\&m}$), are selected. In Case 2, $\Phi$ is -13.01 million\$, supporting the economic feasibility for investing in hydrogen-electrical microgrids, since a negative $\Phi$ represents a positive profit over the planning horizon. Compared to Case 1, Case 2 has a 17.08\% decline in $\Phi^{\rm o\&m}$, while only with a 2.39\% increase in $\Phi^{\rm capex}$. It shows that the DDU offers \textit{flexibility} to NHEMP modeling. Specifically, in trilevel \textbf{DDU-NHEMP}, the middle-level problem seeks for the refueling demand as small as possible to reduce microgrid operator's revenues. With an embedded DDU set, the ``worst-case'' refueling demand will increase with the refueling capacity, thereby improving the profits. 

We provide Case 1\# to represent the expected economic benefits of Case 1 in an environment with DIE, which is calculated by fixing the investment strategies as the corresponding solutions to Case 1 and re-solving the DDU model. Comparing Case 1\# with Case 1, the induced refueling demand is expected to result in a 1.78 million\$ decrease in $\Phi$, suggesting that the economic benefit is underestimated if our planning results are derived by assuming a DIU set. Indeed, we note that such an underestimation may cause us to give up an economic-feasible project in practice.

To formalize our understanding, we introduce two concepts: the \textit{Value of Demand-Inducing Effect} ($V_{\rm DIE}$) and the \textit{Relative Value of Demand-Inducing Effect} ($\overline{V}_{\rm DIE}$), to quantify the benefit of incorporating DIE into the decision-making model. Mathematically, they are defined as:
\begin{eqnarray}
	&&V_{\rm DIE}=\Phi_{1\#}-\Phi_{2}, \label{VDDU-1}\\
	&&\overline{V}_{\rm DIE}= \frac{{\Phi}_{1\#}-\Phi_{2}}{\left|{\Phi}_{1\#}\right|} \times100\%, \label{VDDU-2}
\end{eqnarray}
where $\Phi_{1\#}$ and $\Phi_{2}$ represent the ASE of Case 1\# and Case 2, respectively. Based on their definitions and noting that the result of Case 1\# is just a feasible plan for Case 2, the next result readily follows.  
\begin{prop}
$\Phi_{1\#}\geq \Phi_{2}$, i.e., $V_{\rm DIE}\geq 0$ and $\overline{V}_{\rm DIE}\geq 0$. 
\end{prop}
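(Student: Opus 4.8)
The plan is to show that the investment strategy obtained in Case 1\# is a feasible (though not necessarily optimal) solution for the \textbf{DDU-NHEMP} problem solved in Case 2, and then invoke optimality of Case 2 to conclude $\Phi_{1\#}\geq\Phi_2$. First I would recall how Case 1\# is constructed: we take $\bm{x}_1\in\mathcal{X}$, the optimal upper-level investment solution to Case 1 (the DIU model), fix the upper-level variables to $\bm{x}_1$ in the \textbf{DDU-NHEMP} formulation \eqref{RO-DDU}, and re-solve only the middle- and lower-level problems over the DDU sets $\Xi_s(\bm{x}_1)$ and recourse sets $\mathcal{Y}_s(\bm{x}_1,\bm{\xi}_s)$. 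Thus by definition
\begin{align}
	\Phi_{1\#}=\bm{c}^{\intercal}\bm{x}_1+\sum_{s\in\mathcal{S}}\pi_s\max_{\bm{\xi}_s\in\Xi_s(\bm{x}_1)}\min_{\bm{y}_s\in\mathcal{Y}_s(\bm{x}_1,\bm{\xi}_s)}\bm{d}_s^{\intercal}\bm{y}_s.\label{phi1sharp}
\end{align}

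Next I would observe that $\bm{x}_1\in\mathcal{X}$ — it satisfies exactly the same investment constraints \eqref{cons2-1}--\eqref{cons2-2-1} in both cases, since the feasible set $\mathcal{X}$ does not depend on whether the DIU or DDU model is used. Therefore $\bm{x}_1$ is a candidate upper-level decision in the minimization defining $\Phi_2$, namely
\begin{align}
	\Phi_2=\min_{\bm{x}\in\mathcal{X}}\left(\bm{c}^{\intercal}\bm{x}+\sum_{s\in\mathcal{S}}\pi_s\max_{\bm{\xi}_s\in\Xi_s(\bm{x})}\min_{\bm{y}_s\in\mathcal{Y}_s(\bm{x},\bm{\xi}_s)}\bm{d}_s^{\intercal}\bm{y}_s\right).\label{phi2}
\end{align}
Since $\Phi_2$ is the minimum over all $\bm{x}\in\mathcal{X}$ of the objective in \eqref{phi2}, and the right-hand side of \eqref{phi1sharp} is precisely that same objective evaluated at the particular point $\bm{x}=\bm{x}_1$, we immediately get $\Phi_2\leq\Phi_{1\#}$, which is the claimed inequality. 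The relatively complete recourse property (Section \ref{complete recourse}) guarantees that for $\bm{x}_1$ every inner $\min$ is finite, so \eqref{phi1sharp} is well-defined and the comparison is between finite quantities. Then $V_{\rm DIE}=\Phi_{1\#}-\Phi_2\geq 0$ follows from \eqref{VDDU-1}, and $\overline{V}_{\rm DIE}\geq 0$ follows from \eqref{VDDU-2} since it equals $V_{\rm DIE}$ divided by the nonnegative quantity $|\Phi_{1\#}|$ (with the harmless convention that the ratio is $0$ if $\Phi_{1\#}=0$, which forces $\Phi_2\le 0$ as well, but in any case the sign is nonnegative).

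There is essentially no hard part here — the result is a one-line "restricting the feasible set cannot decrease the optimal value of a minimization" argument. The only thing that warrants a sentence of care is making explicit that Case 1\# and Case 2 are optimizing \emph{the very same} objective functional (the trilevel DDU expression), differing only in that Case 1\# has its upper-level variables pinned to $\bm{x}_1$ while Case 2 optimizes freely over $\mathcal{X}\ni\bm{x}_1$; once that is spelled out, the inequality is immediate. I would keep the proof to two or three sentences, referencing \eqref{RO-DDU} for the objective and the construction of Case 1\# for the definition of $\Phi_{1\#}$.
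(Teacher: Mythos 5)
Your proposal is correct and matches the paper's own argument: the paper likewise notes that the Case~1\# investment (the Case~1 solution with the DDU middle/lower levels re-solved) is merely a feasible choice of $\bm{x}\in\mathcal{X}$ for the Case~2 problem, so minimality of $\Phi_2$ in \eqref{RO-DDU} gives $\Phi_2\leq\Phi_{1\#}$ and hence $V_{\rm DIE}\geq 0$, $\overline{V}_{\rm DIE}\geq 0$. Your added remarks on relatively complete recourse and the same feasible set $\mathcal{X}$ are consistent elaborations of that one-line argument.
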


As in Table \ref{economic}, a 0.6 million\$ $V_{\rm DIE}$ and a 4.83\% $\overline{V}_{\rm DIE}$ are reported. Actually, given that the $\Phi^{\rm capex}$ increase from Case 1\# to Case 2 is rather marginal, which is about 0.09 million\$, we can safely attribute most of these nontrivial benefits to the sound investment plan derived from the DIE-aware NHEMP model. Certainly, as shown in the next subsection, the impact of DIE requires a more systematic study.

Finally, we have studied voltage levels. The maximum value (MaxU), minimum value (MinU), average value (AveU), and variance value (VarU) of voltage are defined as follows:
\begin{align}
	&{\rm MaxU}=\max\left\{U_i^{s,t}\right\}_{s\in\mathcal{S},i\in\mathcal{N^+},t\in\mathcal{T}},\\
	&{\rm MinU}=\min\left\{U_i^{s,t}\right\}_{s\in\mathcal{S},i\in\mathcal{N^+},t\in\mathcal{T}},\\
	&{\rm AveU}=\frac{\sum_{s\in\mathcal{S}}\sum_{t\in\mathcal{T}}\sum_{i\in\mathcal{N^+}}U_i^{s,t}}{\left| \mathcal{S}\right|\left| \mathcal{T}\right|\left| \mathcal{N^+}\right| },\\
	&{\rm VarU}=\frac{\sum_{s\in\mathcal{S}}\sum_{t\in\mathcal{T}}\sum_{i\in\mathcal{N^+}}(U_i^{s,t}-{\rm AveU})^2}{\left| \mathcal{S}\right|\left| \mathcal{T}\right|\left| \mathcal{N^+}\right|-1 }.
\end{align}
As shown in Table \ref{voltage}, the voltage magnitudes of all nodes fall into well the secure range [9.3kV,~10.7kV], indicating the system is able to work well within those voltage bounds, regardless of whether the HVs' refueling demand is captured by DIU or DDU sets.  The relative small variance also suggests that system operations are rather stable under those uncertainty sets.  Certainly, we mention that such observations are  rather system or parameter specific, as different voltage bounds can be adopted that may become critical factors in determining the system planning and operation solutions.

\begin{table}[]
	\centering
	\caption{Voltage Level Results of NHEMP}
		\setlength{\tabcolsep}{2.6mm}{
	\begin{tabular}{ccccc}
		\toprule
		Index & MaxU (kV) & MinU (kV) & AveU (kV) & VarU (kV$^2$)\\
		\midrule
		Case 1 & 10.5716 & 9.4135 & 10.2297 & 0.0423\\
		Case 1\# & 10.4833 & 9.4811 & 10.2184 & 0.0421\\
		Case 2 & 10.5745 & 9.6034 & 10.2607 & 0.0345\\
		\bottomrule
	\end{tabular}%
	\label{voltage}%
}
\end{table}%

\subsection{Sensitivity of DIE}
We have further conducted sensitivity analysis of DIE based on the  parameters of DDU for Case 2. For simplicity, constraints \eqref{ddu-set-3} and \eqref{ddu-set-4} are omitted. In the DDU set, $\overline{\gamma}_z^t$ and $\underline{\gamma}_z^t$ quantify the \textit{unit induced capacity}. Let $\overline{\gamma}_z^t=\chi\cdot\overline{\gamma}_z^{t,0}$ and $\underline{\gamma}_z^t=\chi\cdot\underline{\gamma}_z^{t,0}$, where $\chi\in\left[ 0,1\right] $, and $\overline{\gamma}_z^{t,0}$ and $\underline{\gamma}_z^{t,0}$ are the default values as in Table \ref{tab:ddu-parameter}. Fig. \ref{Economic-DDU} depicts ASE and $\overline{gl}$ with various $\chi$. For $\overline{gl}$, it gradually increases when $\chi$ ranges from 0 to 0.5, reflecting the increase of the system refueling capacity. In this case, the marginal effect on $\overline{gl}$ is the same (increase of 40.83kg/h per 0.1 on $\chi$). When $\chi\geq 0.7$, $\overline{gl}$ is nearly stable around 455.51kg/h, which mainly attributes to the system's inherent restrictions on hydrogen production, storage, and refueling. On the other hand, the ASE demonstrates a  negative correlation with $\chi$ decreasing from -10.21 to -13.01 million\$. The marginal value of ASE generally decreases with $\chi$ (except from 0 to 0.1). Corresponding to the growing trend of $\overline{gl}$, ASE experiences a rapid decline of 2.21 million\$ as $\chi$ ranges from 0 to 0.5, while for $\chi$ ranging from 0.5 to 1, the decrease in ASE is only 0.59 million\$. \textit{It suggests that as $\overline{\gamma}_z^t$ and $\underline{\gamma}_z^t$ increase, the microgrids deployment indeed achieves greater gains, but eventually some physical restrictions will impede further profitability.} Moreover, Fig. \ref{Value-DDU} shows the relationship between $\overline{V}_{\rm DIE}$ and $\chi$, and the $\overline{gl}$ curve is also given as a reference. The variation of $\overline{V}_{\rm DIE}$ shows an initial increase (0 to 13.23\%) with $\chi$, followed by a subsequent decrease from 13.23\% to 9.70\% after $\chi=0.6$. Note that the decline emerges after $\overline{gl}$ becomes stable. We can conclude that $\overline{\gamma}_z^t/\underline{\gamma}_z^t$ positively contributes to $\overline{V}_{\rm DIE}$, and this impact will be gradually reduced when other system constraints become the bottleneck.

\begin{figure}[]
	\centering
	\subfloat[Analysis of ASE and $\overline{gl}$.]{
		\label{Economic-DDU}
		\includegraphics[width=0.47\textwidth]{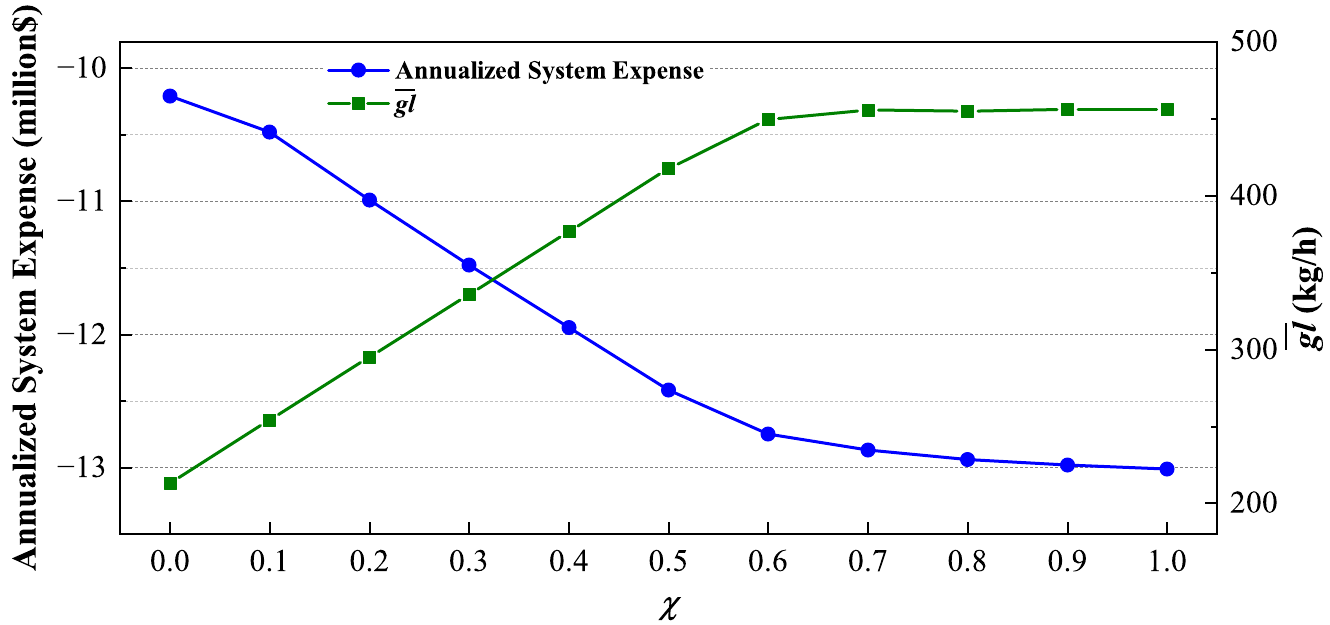}
	}\\
	\subfloat[Analysis of $\overline{V}_{\rm{DIE}}$.]{
		\label{Value-DDU}
		\includegraphics[width=0.47\textwidth]{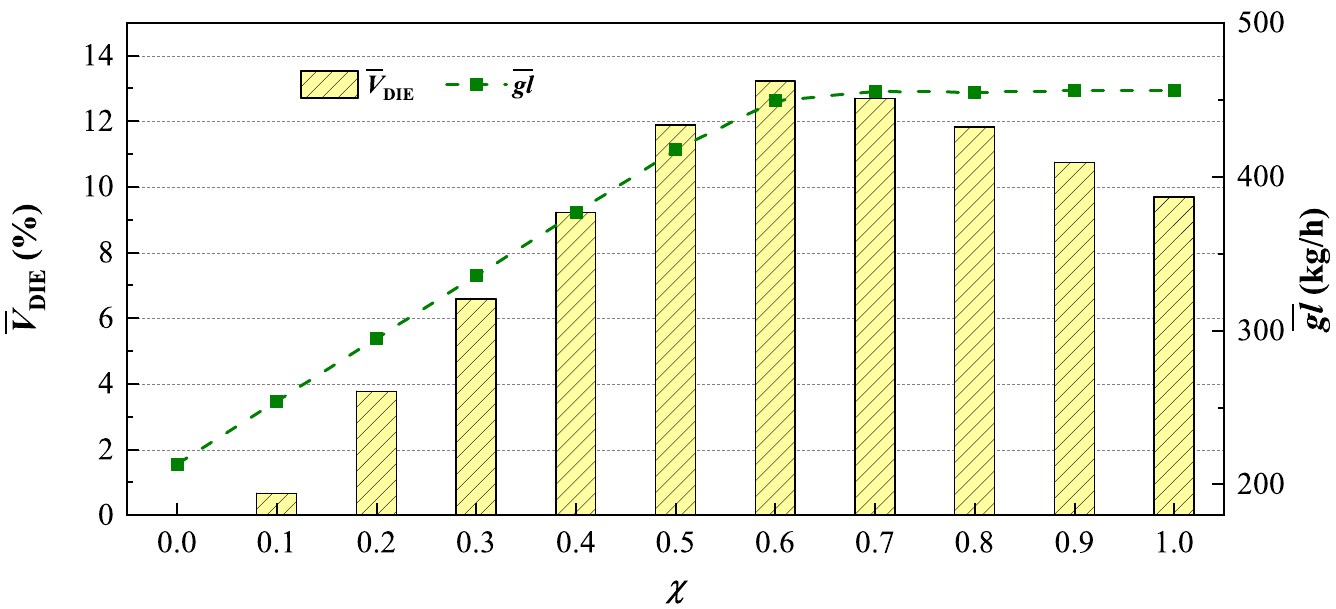}
	}
	\caption{Sensitivity of demand-inducing effect.}
	\label{SA}
\end{figure}

\begin{remark}
	Results in Fig. \ref{SA} indicate that the value of DIE are significantly influenced by the induced coefficients. Therefore, it is critical to make accurate predictions regarding $\overline{\gamma}_z^t/\underline{\gamma}_z^t$ and $\overline{\alpha}^t/\underline{\alpha}^t$, which actually are correlated with the subjective behaviors of drivers, and conduct exact numerical computation to leverage the modeling capacity of DDU, thereby unleashing the economic advantages of DIE.
\end{remark}

\subsection{Computational Performance of PC\&CG}
The computational tests of our customized PC\&CG to solve \textbf{DDU-NHEMP} have been performed and analyzed. 

We have compared PC\&CG with another method for the trilevel $\min-\max-\min$ problem with DDU in the literature, i.e., Benders C\&CG (BC\&CG) \cite{zeng2022two}, which is basically equivalent to the modified Benders dual decomposition presented in Ref. \cite{9754328}. Analogous to our PC\&CG algorithm, original BC\&CG is also extended to the situation with multiple scenarios. For each scenario $s$, with $\hat{\bm{\lambda}}_s$ from \textbf{SP}$_s$, BC\&CG creates new variables $\bm{\xi}_s^{\hat{\bm{\lambda}}_s}$ and $\bm{\vartheta}_s^{\hat{\bm{\lambda}}_s}$, and adds constraints \eqref{bccg-1} and \eqref{bccg-2} as the optimality cut to strengthen \textbf{MP}.
\begin{align}
	&\eta_s\geq (\bm{f}_s-\bm{G}_s\bm{x}-\bm{E}_s\bm{\xi}_s^{\hat{\bm{\lambda}}_s})^{\intercal}\hat{\bm{\lambda}}_s,\quad s\in\mathcal{S}\label{bccg-1}\\
	&(\bm{\xi}_s^{\hat{\bm{\lambda}}_s},\bm{\vartheta}_s^{\hat{\bm{\lambda}}_s})\in\mathcal{OU}_s(\bm{x},\hat{\bm{\lambda}}_s),\quad s\in\mathcal{S}\label{bccg-2}
\end{align}
Fig. \ref{converge} exhibits the convergence behaviors of PC\&CG and BC\&CG under the parameter setting in Case 2. Both of them ultimately converge to the same objective value. However, PC\&CG is almost 180 times faster than BC\&CG, demonstrating a clear dominance over the latter one in computational efficiency. Specifically, PC\&CG could obtain initial upper and lower bounds with exceptional quality (only 2.72\% gap). Then, PC\&CG will quickly diminish the gap, and achieve the convergence in only 4 iterations. On the contrary, BC\&CG starts with UB ($-1.22\times10^{7}$) and LB ($-7.68\times10^{11}$) with a very huge gap (nearly 100\%). After a few iterations from the beginning, UB and LB improve very slowly, leading to a large number of iterations before termination, thereby much longer solution time. 

In fact, for scenario $s$, both PC\&CG and BC\&CG iteratively introduce non-trivial $\bm{\xi}_s^{\hat{\bm{\lambda}}_s}$ through its optimality conditions in a parametric manner, as in \eqref{Add-3} and \eqref{bccg-2}. The difference is that PC\&CG generates a replication of the whole lower-level problem associated with $\bm{\xi}_s^{\hat{\bm{\lambda}}_s}$ to \textbf{MP}, in the form of \eqref{Add-1} and \eqref{Add-2}, building a piecewise-linear under-approximation to the \textit{value function} $\mathcal{Q}_s(\bm{x})\triangleq\max_{\bm{\xi}_s\in\Xi_s(\bm{x})}\min_{\bm{y}_s\in\mathcal{Y}_s(\bm{x},\bm{\xi}_s)}\bm{d}_s^{\intercal}\bm{y}_s$; while BC\&CG only adds a dual hyperplane \eqref{bccg-1} associated with $\bm{\xi}_s^{\hat{\bm{\lambda}}_s}$ to under-approximate $\mathcal{Q}_s(\bm{x})$. Compared to BC\&CG, in one iteration, PC\&CG provides a stronger approximation to $\mathcal{Q}_s(\bm{x})$, thereby resulting in a better convergence behavior. Given the drastic differences between these two methods, we do not recommend BC\&CG to compute practical instances. 

\begin{remark}
	We mention that PC\&CG (resp. BC\&CG) generalizes C\&CG (resp. the Benders-dual method) \cite{zeng2013solving}, which are popular algorithms for the solution of trilevel problems with DIU. Our result further confirms and extends the dominance of C\&CG over the Benders-dual method, e.g., in Refs. \cite{zeng2013solving,bruni2018computational,lima2022risk}, in the DDU environment.
\end{remark}

\begin{figure}[]
		\centering
		\includegraphics[width=0.47\textwidth]{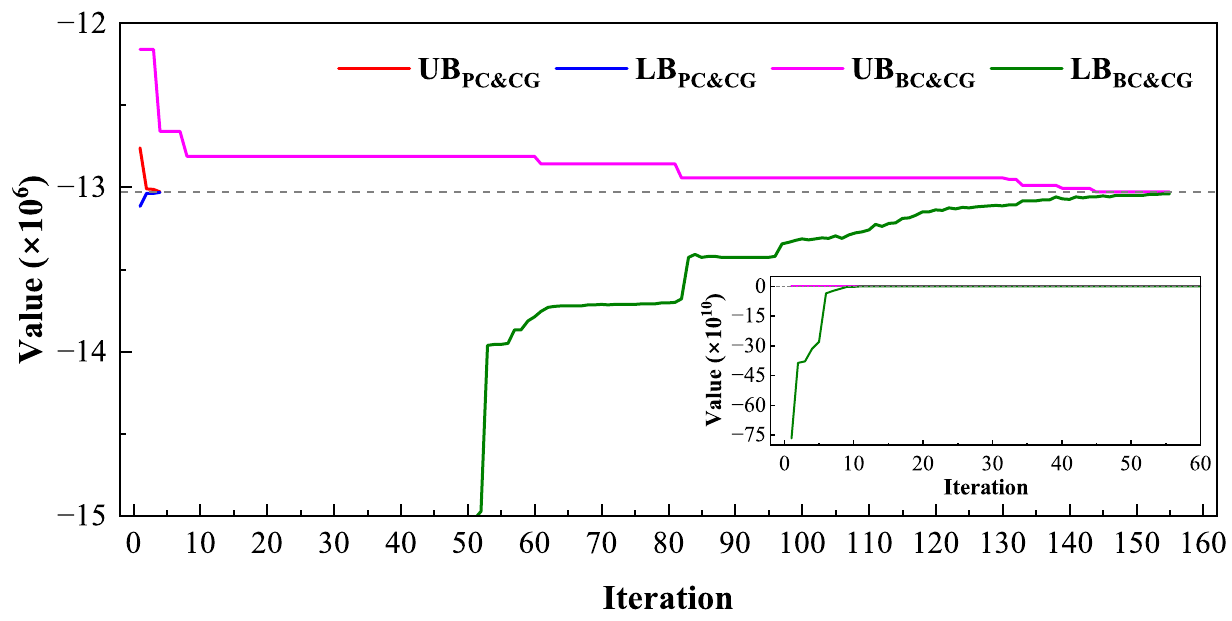}
		
		\caption{Convergence behaviors of PC\&CG and BC\&CG. ($|\mathcal{S}|=3$. Solution times of PC\&CG and BC\&CG are \textbf{41.62s} and \textbf{7410.30s}, respectively, and the corresponding numbers of iterations are \textbf{4} and \textbf{155}.)}
		\label{converge}
\end{figure}

\begin{table}[]
	\centering
	\caption{Computational Results of PC\&CG}
	\begin{tabular}{cccccc}
		\toprule
		\multirow{2}[2]{*}{$|\mathcal{S}|$} & \multicolumn{5}{c}{PC\&CG} \\
		\cmidrule{2-6}          & UB    & LB    & Gap   & Time (s) & Iter \\
		\midrule
		1     & -12663447.38  & -12675944.60  & 0.10\% & 5.20  & 2 \\
		2     & -12720282.10  & -12731305.32  & 0.09\% & 12.46  & 3 \\
		3     & -13027034.59  & -13030024.49  & 0.02\% & 41.62  & 4 \\
		4     & -13583619.00  & -13583619.51  & $<$0.01\% & 55.68  & 4 \\
		5     & -13600526.07  & -13611305.00  & 0.08\% & 15.09  & 2 \\
		6     & -13232295.70  & -13245070.73  & 0.10\% & 38.10  & 3 \\
		7     & -12944700.11  & -12954826.34  & 0.08\% & 36.88  & 3 \\
		8     & -12731237.61  & -12731237.89  & $<$0.01\% & 66.42  & 4 \\
		9     & -12572078.93  & -12575423.70  & 0.03\% & 113.70  & 4 \\
		10    & -12348591.05  & -12350884.49  & 0.02\% & 165.18  & 3 \\
		\bottomrule
	\end{tabular}%
	\label{tab3}%
\end{table}%

Moreover, we have investigated the scalability of PC\&CG. The computational results using PC\&CG upon \textbf{DDU-NHEMP} with various numbers of scenarios ($|\mathcal{S}|$) are presented in Table \ref{tab3}. For each instance, we show the UB and LB, relative gap (Gap), number of iterations (Iter), and computational time in seconds (Time) of PC\&CG when it terminates. For all the instances with $|\mathcal{S}|$ ranging from 1 to 10, PC\&CG can solve them in 4 iterations, and lead to high-quality solutions within a 0.10\% gap. Even for the complex instance with 10 scenarios, it can be solved within 3 minutes. We notice that the computational time is not always increased with $|\mathcal{S}|$ monotonically (e.g., instances with $|\mathcal{S}|=5,~6$ and $7$). It also depends on the operating parameters for each scenario, which may affect the number of required iterations. Intuitively, for instances with the same number of iterations, the solution time increases with $|\mathcal{S}|$. But cases with 6 and 7 scenarios are exceptional, which spend similar amount of time for convergence. In summary, PC\&CG offers a powerful and scalable computational tool to solve complex \textbf{DDU-NHEMP} problems.

\section{Discussion}\label{discussion}
The previous section has provided case studies on NHEMP with DIE. Together with the numerical results, here, we summarize and include some discussions on our key findings.
\begin{enumerate}
	\item\textit{Demand-Inducing Effect:} DIE positively contributes to the economic benefits of NHEMP.  The case studies have indicated that incorporating the consideration of DIE into the decision-making model can result in a maximum of 13.23\% gain in expected benefits. However, this impact will be gradually reduced when other system restrictions become the bottleneck. 
	\item\textit{Decision-Dependent Uncertainty:} DDU offers a flexible modeling capacity to generate formulations. For the NHEMP problem, DDU can capture the investment induced refueling demand, and thus accurately, if solved to optimality, reflects the impact of DIE and helps us derive a sound investment plan. 
	\item\textit{Parametric Column-and-Constraint Generation Algorithm:} PC\&CG provides a powerful and scalable computational tool. It can exactly and efficiently solve the complex trilevel stochastic-robust NHEMP problem with DDU. The results of our numerical experiments demonstrate the clear domination of PC\&CG over another dual cutting-plane method, i.e., BC\&CG, with a remarkable reduction in solution time by almost 180 times. Moreover, PC\&CG has the potential to be employed solving similar types of practical problems.
\end{enumerate}

\section{Conclusion and Future Direction}\label{conclusion}
This paper has proposed a stochastic-robust approach to support NHEMP with DIE. A novel $\min-\max-\min$ formulation with DDU sets has been presented, which reflects the mutual interactions between investment decisions and HV refueling demand. To address the computational intractability, an adaptive decomposition algorithm based on PC\&CG has been customized and developed. Case studies on an IEEE exemplary system have corroborated the effectiveness of our DIE-aware NHEMP model and the advancement of our PC\&CG algorithm.

There are several interesting directions worth investigating for future research. One potential work is to introduce DIE into the realm of demand response, thereby applying the proposed DDU-based decision framework to the optimal operation of smart grids. Particularly, problems with market equilibria are of our interest. Furthermore, it is worth studying how  PC\&CG can be leveraged to handle the trilevel stochastic-robust problems whose middle- and lower-levels contain integer-valued variables and nonlinearity.

\appendices
\section{Proof of Lemma \ref{lemma-1}} \label{appen-1}
\begin{proof}[Proof of Lemma \ref{lemma-1}]
On the one hand, given a  $\bm{\lambda}_s^{\prime}\in\mathcal{P}_s$, according to the definitions of $\mathcal{OU}_s$ (in Section \ref{def-OU}) and the projection relationship between $\mathcal{OU}_s^{\bm{\xi}_s}$ and $\mathcal{OU}_s$ (defined in Lemma \ref{lemma-1}), $\mathcal{OU}_s^{\bm{\xi}_s}(\bm{x},\bm{\lambda}_s^{\prime})\subseteq\Xi_s(\bm{x})$ holds. Hence, considering all $\bm{\lambda}_s$'s in $\mathcal{P}_s$, we have $\textstyle\bigcup_{\bm{\lambda}_s\in\mathcal{P}_s}\mathcal{OU}_s^{\bm{\xi}_s}(\bm{x},\bm{\lambda}_s)=\Xi_s^*(\bm{x})\subseteq\Xi_s(\bm{x})$, and $LHS\geq RHS$.

On the other hand, following Proposition \ref{prop-1} in Section \ref{def-OU}, there exists an optimal $\hat{\bm{\xi}}_s$ of the LHS also an optimal solution of $\bm{\Psi}_s(\bm{x},\hat{\bm{\lambda}}_s)$, with a certain $\hat{\bm{\lambda}}_s\in\mathcal{P}_s$. Recalling the definitions of $\mathcal{OU}_s$ and $\mathcal{OU}_s^{\bm{\xi}_s}$, we have $\hat{\bm{\xi}}_s\in\mathcal{OU}_s^{\bm{\xi}_s}(\bm{x},\hat{\bm{\lambda}}_s)$, and thus $\hat{\bm{\xi}}_s\in\Xi_s^*(\bm{x})$. Hence, $LHS\leq RHS$ holds. 

Together with $LHS\geq RHS$ and $LHS\leq RHS$, $LHS=RHS$ readily follows.
\end{proof}

\section{Proof of Theorem \ref{thm-2}}\label{appen-2}
It is worth highlighting that the proof follows the core theory of linear programming \cite{karloff2008linear}. Firstly, we introduce the standard LP form of $\bm{\Psi}_s(\bm{x},\bm{\lambda}_s)$, for  given $\bm{\lambda}_s\in\Pi_s$ and $s\in\mathcal{S}$, which can be derived by adding $m_{\rm \xi}$ non-negative slack variables to original $\bm{\Psi}_s(\bm{x},\bm{\lambda}_s)$:
\begin{eqnarray}
	&&\widetilde{\bm{\Psi}}_s(\bm{x},\bm{\lambda}_s):\min_{\widetilde{\bm{\xi}}_s\in\widetilde{\Xi}_s(\bm{x})}\left(\widetilde{\bm{E}}_s\widetilde{\bm{\xi}}_s\right)^{\intercal}\bm{\lambda}_s=\left(\mathfrak{e}_s^{\bm{\lambda}_s}\right)^{\intercal}\widetilde{\bm{\xi}}_s,\\
	&&\widetilde{\Xi}_s(\bm{x})=\left\{\widetilde{\bm{\xi}}_s\in\mathbb{R}_+^{n_{\rm \xi}+m_{\rm \xi}}\;\Big|\;\widetilde{\bm{H}}_s\widetilde{\bm{\xi}}_s=\bm{h}_s-\bm{F}_s\bm{x}\right\},\label{}
\end{eqnarray}
where $\mathfrak{e}_s^{\bm{\lambda}_s}=(\widetilde{\bm{E}}_s)^{\intercal}\bm{\lambda}_s$. With a $\widetilde{\bm{\xi}}_s$ from $\widetilde{\bm{\Psi}}_s(\bm{x},\bm{\lambda}_s)$, the corresponding $\bm{\xi}_s$ can be recovered by retaining the 1-st to $n_{\rm \xi}$-th elements of $\widetilde{\bm{\xi}}_s$. Obviously, $\widetilde{\bm{H}}_s$ is an $m_{\rm \xi}\times(n_{\rm \xi}+m_{\rm \xi})$ matrix, the $m_{\rm \xi}$ rows of which are linearly independent. Hence,  in $\widetilde{\bm{H}}_s$, there must exist $m_{\rm \xi}$ linearly independent column vectors that constitute a \textit{basis}. Note that although $\widetilde{\Xi}_s(\bm{x})$ is dependent on $\bm{x}$, the bases of $\widetilde{\bm{H}}_s$ are independent of $\bm{x}$ and $\bm{\lambda}_s$. Thus, an important lemma follows:

\begin{lem}\label{lemma-2}
	For given $\bm{x}'\in\mathcal{X}$ and $s\in\mathcal{S}$,  consider $\widetilde{\bm{\Psi}}_s(\bm{x}',\bm{\lambda}_s)$ with a fixed $\bm{\lambda}_s\in\Pi_s$. Let $\mathfrak{B}_{s}^{\prime}$ denote a basis of $\widetilde{\bm{H}}_s$, and suppose that its corresponding basic solution (BS) is both a basic feasible solution and an optimal solution to $\widetilde{\bm{\Psi}}_s(\bm{x}',\bm{\lambda}_s)$, i.e., $\mathfrak{B}_{s}^{\prime}$ is an optimal basis and the BS is a basic optimal solution (BOS). If the BS of $\mathfrak{B}_{s}^{\prime}$ in relation to $\widetilde{\bm{\Psi}}_s(\bm{x}'',\bm{\lambda}_s)$~($\bm{x}''\neq\bm{x}'$ and $\bm{x}''\in\mathcal{X}$) is also feasible, it is optimal to $\widetilde{\bm{\Psi}}_s(\bm{x}'',\bm{\lambda}_s)$. Further, if $\mathfrak{B}_{s}^{\prime}$ yields the unique BOS to $\widetilde{\bm{\Psi}}_s(\bm{x}',\bm{\lambda}_s)$, it also yields the unique one to $\widetilde{\bm{\Psi}}_s(\bm{x}'',\bm{\lambda}_s)$.
\end{lem}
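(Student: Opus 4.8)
\textbf{Proof proposal for Lemma \ref{lemma-2}.}

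The plan is to reduce the statement to the standard sensitivity/parametric theory of linear programming in standard form, where only the right-hand side $\bm{h}_s-\bm{F}_s\bm{x}$ depends on the parameter $\bm{x}$, while the cost vector $\mathfrak{e}_s^{\bm{\lambda}_s}$ and the constraint matrix $\widetilde{\bm{H}}_s$ are held fixed. The key observation, already emphasized in the excerpt, is that the bases of $\widetilde{\bm{H}}_s$ do not depend on $\bm{x}$ or on $\bm{\lambda}_s$; so a basis $\mathfrak{B}_s'$ that is a legitimate candidate for $\widetilde{\bm{\Psi}}_s(\bm{x}',\bm{\lambda}_s)$ is equally a legitimate candidate for $\widetilde{\bm{\Psi}}_s(\bm{x}'',\bm{\lambda}_s)$. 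First I would fix notation: write $\widetilde{\bm{H}}_s = [\bm{N}_s \mid \mathfrak{B}_s']$ after a suitable column permutation, so that the basic solution associated with $\mathfrak{B}_s'$ at parameter $\bm{x}$ has basic components $\widetilde{\bm{\xi}}_{s,\mathfrak{B}}(\bm{x}) = (\mathfrak{B}_s')^{-1}(\bm{h}_s - \bm{F}_s\bm{x})$ and nonbasic components $\bm{0}$.

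Next I would record the two optimality certificates for a standard-form minimization LP. Primal feasibility of the BS at $\bm{x}$ is exactly the condition $(\mathfrak{B}_s')^{-1}(\bm{h}_s-\bm{F}_s\bm{x}) \geq \bm{0}$; this is the only condition that moves with $\bm{x}$. Dual feasibility — equivalently, nonnegativity of the reduced costs $\bar{\mathfrak{e}}_s = \mathfrak{e}_{s,N}^{\bm{\lambda}_s} - \bm{N}_s^{\intercal}(\mathfrak{B}_s')^{-\intercal}\mathfrak{e}_{s,\mathfrak{B}}^{\bm{\lambda}_s} \geq \bm{0}$ — depends only on $\mathfrak{B}_s'$, $\widetilde{\bm{H}}_s$, and $\mathfrak{e}_s^{\bm{\lambda}_s}$, none of which change when $\bm{x}'$ is replaced by $\bm{x}''$. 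The hypothesis that $\mathfrak{B}_s'$ is an optimal basis for $\widetilde{\bm{\Psi}}_s(\bm{x}',\bm{\lambda}_s)$ gives us both certificates at $\bm{x}'$; in particular it hands us the reduced-cost nonnegativity, which is parameter-free. The additional hypothesis of the lemma is precisely that the BS of $\mathfrak{B}_s'$ is primal feasible at $\bm{x}''$, i.e. $(\mathfrak{B}_s')^{-1}(\bm{h}_s-\bm{F}_s\bm{x}'')\geq \bm{0}$. Combining the retained dual-feasibility certificate with this new primal-feasibility fact, the standard LP optimality theorem (a basic feasible solution whose reduced costs are all nonnegative is optimal) immediately yields that the BS of $\mathfrak{B}_s'$ is optimal for $\widetilde{\bm{\Psi}}_s(\bm{x}'',\bm{\lambda}_s)$.

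For the uniqueness addendum I would invoke the strict form of the same certificate: a basic optimal solution is the \emph{unique} optimum when all nonbasic reduced costs are strictly positive (no zero reduced cost on a nonbasic variable, so no alternative optimal vertex and no optimal edge). Since the reduced-cost vector $\bar{\mathfrak{e}}_s$ is literally the same vector at $\bm{x}''$ as at $\bm{x}'$, strict positivity at $\bm{x}'$ transfers verbatim to $\bm{x}''$; hence if $\mathfrak{B}_s'$ yields the unique BOS at $\bm{x}'$ it yields the unique BOS at $\bm{x}''$. I would also note, to be careful, that the BS of $\mathfrak{B}_s'$ at $\bm{x}''$ is still a \emph{basic} solution by construction — the degenerate-vertex subtlety does not affect the argument because we only need a single certified-optimal point, and degeneracy would at worst give alternative \emph{bases} for the same solution, which is harmless here.

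I do not expect a genuine obstacle: the content is entirely the textbook fact that, in right-hand-side parametric LP, the dual-feasibility (reduced-cost) region of a basis is independent of the parameter, so the only thing that can fail under a parameter change is primal feasibility — and that is exactly what the lemma assumes away. The one point requiring a sentence of care is making sure the reduction to standard form is legitimate, namely that adding the $m_\xi$ slack variables does not disturb the claimed parameter-independence of the bases (it does not, since the slacks contribute an identity block to $\widetilde{\bm{H}}_s$ with zero cost, independent of $\bm{x}$ and $\bm{\lambda}_s$), and that recovering $\bm{\xi}_s$ from $\widetilde{\bm{\xi}}_s$ by truncation preserves optimality (it does, since the objective $(-\bm{E}_s\bm{\xi}_s)^{\intercal}\bm{\lambda}_s$ is unchanged by the slacks). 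Beyond that the proof is a direct citation of \cite{karloff2008linear}.
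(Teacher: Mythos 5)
Your argument is correct and is essentially the paper's own proof: both rest on the observation that the reduced costs $\bm{r}_{s,\mathfrak{N}_s'}$ depend only on $\mathfrak{B}_s'$, $\widetilde{\bm{H}}_s$, and $\mathfrak{e}_s^{\bm{\lambda}_s}$ and hence carry over unchanged from $\bm{x}'$ to $\bm{x}''$, so optimality follows once primal feasibility at $\bm{x}''$ is assumed, and uniqueness from strict positivity of the nonbasic reduced costs. Your reliance on ``unique BOS at $\bm{x}'$ $\Rightarrow$ strictly positive reduced costs'' (which can fail under degeneracy) is exactly the same implicit step the paper takes, and the paper defers the degenerate case to a perturbation of $\mathfrak{e}_s^{\bm{\lambda}_s}$ in the proof of its Claim~1, so no new gap is introduced.
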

\begin{proof}[Proof of Lemma \ref{lemma-2}]
For $s\in\mathcal{S}$, with basis $\mathfrak{B}_{s}^{\prime}$ of $\widetilde{\bm{H}}_s$, we can reorder $\widetilde{\bm{H}}_s=[\mathfrak{B}_{s}^{\prime},\mathfrak{N}_{s}^{\prime}]$ and $\mathfrak{e}_{s}=\begin{bmatrix}
	\mathfrak{e}^{\bm{\lambda}_s}_{s,\mathfrak{B}_{s}^{\prime}}\\ 
	\mathfrak{e}^{\bm{\lambda}_s}_{s,\mathfrak{N}_{s}^{\prime}}
\end{bmatrix}$. 
For $\widetilde{\bm{\Psi}}_s(\bm{x}',\bm{\lambda}_s)$ with fixed $\bm{\lambda}_s$, given the optimality of $\mathfrak{B}_{s}^{\prime}$, the reduced cost  $\bm{r}_{s,\mathfrak{N}_{s}^{\prime}}^{\intercal}=(\mathfrak{e}^{\bm{\lambda}_s}_{s,\mathfrak{N}_{s}^{\prime}})^{\intercal}-(\mathfrak{e}^{\bm{\lambda}_s}_{s,\mathfrak{B}_{s}^{\prime}})^{\intercal}(\mathfrak{B}_{s}^{\prime})^{-1}\mathfrak{N}_{s}^{\prime}$ satisfies $\bm{r}_{s,\mathfrak{N}_{s}^{\prime}}\geq\bm{0}$. Note that $\bm{r}_{s,\mathfrak{N}_{s}^{\prime}}$ is independent of $\bm{x}'$, i.e., $\forall\bm{x}''\in\mathcal{X}$, this inequality always holds. Hence, the BS of $\mathfrak{B}_{s}^{\prime}$ with respect to $\widetilde{\bm{\Psi}}_s(\bm{x}'',\bm{\lambda}_s)$ is also a BOS if, and only if, it is a basic feasible solution. Similarly, the second conclusion readily follows with $\bm{r}_{s,\mathfrak{N}_{s}^{\prime}}>\bm{0}$ strictly.
\end{proof}

Then, the following claim goes to support Theorem \ref{thm-2}. 
\begin{cla}\label{cla-1}
	Let $\hat{\bm{x}}^{\tau}$ and  $(\hat{\bm{\lambda}}^{\tau},\cdots,\hat{\bm{\lambda}}_{|\mathcal{S}|}^{\tau})$ be, respectively, the optimal upper-level solution and extreme points of $\Pi_s$'s obtained in iteration $\tau$ of PC\&CG, and $\mathfrak{B}_{s}^{\tau}~(\forall s\in\mathcal{S})$ denote the optimal basis of $\widetilde{\bm{\Psi}}_s(\hat{\bm{x}}^{\tau},\hat{\bm{\lambda}}^{\tau}_s)$ that corresponds to a BOS. For iteration $\tau_1$ and $\tau_2$ ($\tau_1<\tau_2$), if 
	$(\mathfrak{B}_{1}^{\tau_1},\cdots,\mathfrak{B}^{\tau_1}_{|\mathcal{S}|})=(\mathfrak{B}_{1}^{\tau_2},\cdots,\mathfrak{B}^{\tau_2}_{|\mathcal{S}|})$, $UB=LB$ must hold in iteration $\tau_2$.
\end{cla}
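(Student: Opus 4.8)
The plan is to show that if the optimal bases for all scenarios repeat between iterations $\tau_1$ and $\tau_2$, then the new optimality cut added in iteration $\tau_1$ is already "tight enough" at the master-problem solution $\hat{\bm{x}}^{\tau_2}$ to force $UB = LB$. First I would recall the structure of the PC\&CG optimality cut: for each $s$, iteration $\tau_1$ appends constraints \eqref{Add-1}--\eqref{Add-3}, which (i) replicate the lower-level problem $\min_{\bm{y}_s\in\mathcal{Y}_s(\bm{x},\bm{\xi}_s^{\hat{\bm{\lambda}}_s})}\bm{d}_s^{\intercal}\bm{y}_s$ and (ii) pin $\bm{\xi}_s^{\hat{\bm{\lambda}}_s}$ to lie in $\mathcal{OU}_s(\bm{x},\hat{\bm{\lambda}}_s^{\tau_1})$, i.e.\ to be an optimal solution of $\bm{\Psi}_s(\bm{x},\hat{\bm{\lambda}}_s^{\tau_1})$ parametrically in $\bm{x}$. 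Because $\hat{\bm{x}}^{\tau_2}$ is feasible for the master problem in iteration $\tau_2$, which contains the cut from $\tau_1$, there exist $\bm{\xi}_s^{\hat{\bm{\lambda}}_s^{\tau_1}}$ feasible in $\mathcal{OU}_s(\hat{\bm{x}}^{\tau_2},\hat{\bm{\lambda}}_s^{\tau_1})$ and $\bm{y}_s^{\hat{\bm{\lambda}}_s^{\tau_1}}$ attaining $\eta_s \ge \bm{d}_s^{\intercal}\bm{y}_s^{\hat{\bm{\lambda}}_s^{\tau_1}}$; hence $\hat{\eta}_s^{\tau_2}$ is at least the value of this replicated lower-level problem for the parametric worst-case demand.

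Next I would invoke Lemma \ref{lemma-2} together with the hypothesis $(\mathfrak{B}_1^{\tau_1},\dots,\mathfrak{B}_{|\mathcal{S}|}^{\tau_1}) = (\mathfrak{B}_1^{\tau_2},\dots,\mathfrak{B}_{|\mathcal{S}|}^{\tau_2})$. The basis $\mathfrak{B}_s^{\tau_1}$ is an optimal basis of $\widetilde{\bm{\Psi}}_s(\hat{\bm{x}}^{\tau_1},\hat{\bm{\lambda}}_s^{\tau_1})$; Lemma \ref{lemma-2} says that whenever the associated basic solution is feasible for $\widetilde{\bm{\Psi}}_s(\hat{\bm{x}}^{\tau_2},\hat{\bm{\lambda}}_s^{\tau_1})$, it is optimal there too. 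The point is that the basic solution generated inside $\mathcal{OU}_s(\hat{\bm{x}}^{\tau_2},\hat{\bm{\lambda}}_s^{\tau_1})$ in the master problem is exactly the basic solution of basis $\mathfrak{B}_s^{\tau_1}$ evaluated at $\hat{\bm{x}}^{\tau_2}$ — and since it lies in $\mathcal{OU}_s$ it is by construction feasible, hence optimal for $\widetilde{\bm{\Psi}}_s(\hat{\bm{x}}^{\tau_2},\hat{\bm{\lambda}}_s^{\tau_1})$. I would then argue this means the $\bm{\xi}_s^{\hat{\bm{\lambda}}_s^{\tau_1}}$ carried by the cut coincides with the true worst-case demand that $\bm{SP}_s$ computes at $\hat{\bm{x}}^{\tau_2}$: one needs to connect $\hat{\bm{\lambda}}_s^{\tau_1}$, obtained in $\bm{SP}_s$ at iteration $\tau_1$, to the bilinear dual problem \eqref{dual-2stage} and use Proposition \ref{prop-1}, observing that at iteration $\tau_2$ the subproblem $\bm{SP}_s(\hat{\bm{x}}^{\tau_2})$ either re-discovers the same $\hat{\bm{\lambda}}_s$ (because the basis — hence the induced dual extreme point structure — is unchanged) or produces no improvement over what the cut already encodes.

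From there the UB/LB equality follows by the standard telescoping argument of C\&CG-type schemes: $LB = \bm{c}^{\intercal}\hat{\bm{x}}^{\tau_2} + \sum_{s}\pi_s\hat{\eta}_s^{\tau_2}$, and by the preceding paragraph $\hat{\eta}_s^{\tau_2} = \max_{\bm{\xi}_s\in\Xi_s(\hat{\bm{x}}^{\tau_2})}\min_{\bm{y}_s\in\mathcal{Y}_s(\hat{\bm{x}}^{\tau_2},\bm{\xi}_s)}\bm{d}_s^{\intercal}\bm{y}_s = \mathcal{Q}_s(\hat{\bm{x}}^{\tau_2})$, i.e.\ the master's estimate of each scenario's recourse value is exact. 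But $UB \le \bm{c}^{\intercal}\hat{\bm{x}}^{\tau_2} + \sum_s \pi_s \mathcal{Q}_s(\hat{\bm{x}}^{\tau_2})$ is precisely the value recorded in \eqref{def-ub} using the $\hat{\bm{y}}_s$ from the subproblems, so $UB \le LB$; combined with the always-valid $UB \ge LB$, we get $UB = LB$.

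The main obstacle I anticipate is the middle paragraph — rigorously establishing that repetition of the optimal \emph{bases} $\mathfrak{B}_s$ (not merely of the extreme points $\hat{\bm{\lambda}}_s$ or of $\hat{\bm{x}}$) is enough to guarantee that the parametric cut $\mathcal{OU}_s(\bm{x},\hat{\bm{\lambda}}_s^{\tau_1})$ reproduces the \emph{exact} worst-case recourse value at the new point $\hat{\bm{x}}^{\tau_2}$. This requires carefully tracking how $\hat{\bm{\lambda}}_s$ enters $\bm{\Psi}_s$ as a cost vector, confirming that the basis-invariance of Lemma \ref{lemma-2} transfers from $\widetilde{\bm{\Psi}}_s$ back to the original demand subproblem, and ruling out the degenerate possibility that a different optimal basis at $\hat{\bm{x}}^{\tau_2}$ could yield a strictly larger objective in \eqref{dual-2stage} than the cut accounts for — which is where the finiteness of the iteration bound $\binom{n_\xi+m_\xi}{m_\xi}^{|\mathcal{S}|}$ in Theorem \ref{thm-2} ultimately comes from, since there are only that many basis tuples.
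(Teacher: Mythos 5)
Your skeleton is the same as the paper's (cut from iteration $\tau_1$ present in \textbf{MP} at $\tau_2$ gives $LB\geq \bm{c}^{\intercal}\hat{\bm{x}}^{\tau_2}+\sum_s\pi_s\cdot(\text{recourse at the pinned demand})$, then chain to $\geq UB$ and combine with $LB\leq UB$), but the decisive middle step --- that repetition of the bases forces the demand pinned by $\mathcal{OU}_s(\hat{\bm{x}}^{\tau_2},\hat{\bm{\lambda}}_s^{\tau_1})$ to be exactly a worst-case demand at $\hat{\bm{x}}^{\tau_2}$ --- is precisely what you leave open, and the route you sketch for it (the subproblem at $\tau_2$ ``re-discovers the same $\hat{\bm{\lambda}}_s$'' or ``produces no improvement'') is not the mechanism and cannot be, since $\hat{\bm{\lambda}}_s^{\tau_2}$ is in general a different extreme point from $\hat{\bm{\lambda}}_s^{\tau_1}$; the hypothesis only equates the optimal \emph{bases}. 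The missing idea in the paper is elementary but essential: the basic solution attached to a basis, $(\mathfrak{B}_s)^{-1}(\bm{h}_s-\bm{F}_s\bm{x})$ padded with zeros, does not involve the cost vector $\bm{\lambda}_s$ at all. Hence the BOS of $\widetilde{\bm{\Psi}}_s(\hat{\bm{x}}^{\tau_2},\hat{\bm{\lambda}}_s^{\tau_2})$ under $\mathfrak{B}_s^{\tau_2}$ --- which by Proposition \ref{prop-1} yields a worst-case demand $\hat{\bm{\xi}}_s^{\tau_2}$ at $\hat{\bm{x}}^{\tau_2}$ --- is the \emph{same vector} that the basis $\mathfrak{B}_s^{\tau_1}=\mathfrak{B}_s^{\tau_2}$ produces when the cost vector is $\hat{\bm{\lambda}}_s^{\tau_1}$; it is feasible, so Lemma \ref{lemma-2} makes it optimal for $\widetilde{\bm{\Psi}}_s(\hat{\bm{x}}^{\tau_2},\hat{\bm{\lambda}}_s^{\tau_1})$, i.e., it lies in $\mathcal{OU}_s^{\bm{\xi}_s}(\hat{\bm{x}}^{\tau_2},\hat{\bm{\lambda}}_s^{\tau_1})$. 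This cost-independence of the basic solution is how the two distinct dual points are reconciled through the shared basis; without stating it, your ``the basic solution generated inside $\mathcal{OU}_s$ is exactly the basic solution of $\mathfrak{B}_s^{\tau_1}$'' is an assertion, not an argument.

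There is a second unresolved point that your plan needs and the paper treats explicitly: $\mathcal{OU}_s(\hat{\bm{x}}^{\tau_2},\hat{\bm{\lambda}}_s^{\tau_1})$ is the set of \emph{all} optimizers of $\bm{\Psi}_s(\hat{\bm{x}}^{\tau_2},\hat{\bm{\lambda}}_s^{\tau_1})$, and \textbf{MP} minimizes over it; if it contains demand vectors other than $\hat{\bm{\xi}}_s^{\tau_2}$, the master may select a less adversarial one and keep $\eta_s$ strictly below $\mathcal{Q}_s(\hat{\bm{x}}^{\tau_2})$, which breaks the step $LB\geq UB$. The paper closes this by assuming the optimizer of $\widetilde{\bm{\Psi}}_s$ is unique --- so the second part of Lemma \ref{lemma-2} gives $\mathcal{OU}_s^{\bm{\xi}_s}(\hat{\bm{x}}^{\tau_2},\hat{\bm{\lambda}}_s^{\tau_1})=\{\hat{\bm{\xi}}_s^{\tau_2}\}$ --- and, in the degenerate case with a zero reduced cost, by perturbing the relevant component of the cost vector and modifying the generated $\mathcal{OU}_s$ accordingly. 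By contrast, the degeneracy you flag (``a different optimal basis at $\hat{\bm{x}}^{\tau_2}$ yielding a strictly larger objective in \eqref{dual-2stage}'') is not the actual danger: the claim's hypothesis already makes $\mathfrak{B}_s^{\tau_2}$ an optimal basis for $\widetilde{\bm{\Psi}}_s(\hat{\bm{x}}^{\tau_2},\hat{\bm{\lambda}}_s^{\tau_2})$, so $\hat{\bm{\xi}}_s^{\tau_2}$ is a genuine worst case by Proposition \ref{prop-1}; the danger is multiplicity of optimizers for the \emph{fixed} $\hat{\bm{\lambda}}_s^{\tau_1}$ inside the cut. So the overall approach is the paper's, but the proof as proposed has a genuine gap at its core step, and the auxiliary arguments you anticipate would not fill it.
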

\begin{proof}[Proof of Claim \ref{cla-1}]
	For a practical \textbf{DDU-NHEMP} problem, the optimal solution of $\widetilde{\bm{\Psi}}_s(\bm{x},\bm{\lambda}_s)$ with certain $\bm{x}$ and $\bm{\lambda}_s$ is generally unique. In iteration $\tau_2$, for each $s\in\mathcal{S}$, we use
	 $\widetilde{\bm{\xi}}_s^{\tau_2}=\begin{bmatrix}
	 	(\mathfrak{B}_{s}^{\tau_2})^{-1}(\bm{h}_s-\bm{F}\hat{\bm{x}}^{\tau_2})\\ 
	 	\bm{0}
	 \end{bmatrix}\geq\bm{0}$ 
	denoting the BOS to $\widetilde{\bm{\Psi}}_s(\hat{\bm{x}}^{\tau_2},\hat{\bm{\lambda}}^{\tau_2}_s)$ with $\mathfrak{B}_{s}^{\tau_2}$, and $\hat{\bm{\xi}}_s^{\tau_2}$ being $\widetilde{\bm{\xi}}_s^{\tau_2}$'s incarnation in $\Xi_s(\hat{\bm{x}}^{\tau_2})$. Note that $\widetilde{\bm{\xi}}_s^{\tau_2}$ is independent of $\hat{\bm{\lambda}}_s^{\tau_2}$. Thus,
	\setlength{\arraycolsep}{0.5em}
	\begin{eqnarray}
		\max_{\bm{\xi}_s\in\Xi_s(\hat{\bm{x}}_2)}\min_{\bm{y}_s\in\mathcal{Y}_s(\hat{\bm{x}}_2,\bm{\xi}_s)}\bm{d}_s^{\intercal}\bm{y}_s=\min_{\bm{y}_s\in\mathcal{Y}_s(\hat{\bm{x}}_2,\hat{\bm{\xi}}^{\tau_2}_s)}\bm{d}_s^{\intercal}\bm{y}_s.\label{www}
	\end{eqnarray}
	
	Shift perspective to iteration $\tau_1$: Recalling the uniqueness condition, for each $s$,  $\mathfrak{B}_{s}^{\tau_1}$ yields the unique BOS to $\widetilde{\bm{\Psi}}_s(\hat{\bm{x}}^{\tau_1},\hat{\bm{\lambda}}^{\tau_1}_s)$. Since $(\mathfrak{B}_{1}^{\tau_1},\cdots,\mathfrak{B}^{\tau_1}_{|\mathcal{S}|})=(\mathfrak{B}_{1}^{\tau_2},\cdots,\mathfrak{B}^{\tau_2}_{|\mathcal{S}|})$, according to Lemma \ref{lemma-2}, $\mathfrak{B}_{s}^{\tau_2}$ also yields the unique BOS to $\widetilde{\bm{\Psi}}_s(\hat{\bm{x}}^{\tau_2},\hat{\bm{\lambda}}^{\tau_1}_s)$, i.e., $\begin{bmatrix}
		(\mathfrak{B}_{s}^{\tau_2})^{-1}(\bm{h}_s-\bm{F}\hat{\bm{x}}^{\tau_2})\\ 
		\bm{0}
	\end{bmatrix}=\widetilde{\bm{\xi}}_s^{\tau_2}$. Hence, $\mathcal{OU}_s^{\bm{\xi}_s}(\hat{\bm{x}}^{\tau_2},\hat{\bm{\lambda}}^{\tau_1}_s)=\{ \hat{\bm{\xi}}_s^{\tau_2} \}$. 
	
	Return to iteration $\tau_2$: Note that, for all $s\in\mathcal{S}$, $\hat{\bm{\lambda}}_s^{\tau_1}\in\hat{\mathcal{P}}_s$ exists, and thus
	\setlength{\arraycolsep}{-0.6em}
	\begin{eqnarray}
		&&LB\geq\bm{c}^{\intercal}\hat{\bm{x}}^{\tau_2}+\sum_{s\in\mathcal{S}}\pi_s\min\eta_s\\
		&&\quad\quad\quad{\rm s.t.}\;\eta_s\geq \bm{d}_s^{\intercal}\bm{y}_s^{\hat{\bm{\lambda}}_s^{\tau_1}},\quad \forall s\in\mathcal{S}\label{}\\
		&&\quad\quad\quad\quad\;\;\bm{B}_s\bm{y}_s^{\hat{\bm{\lambda}}_s^{\tau_1}}\geq\bm{f}_s -\bm{G}_s\hat{\bm{x}}^{\tau_2}-\bm{E}_s\hat{\bm{\xi}}_s^{\tau_2},\quad \forall s\in\mathcal{S}\label{}\\
		&&\quad\quad\quad\quad\;\;\bm{y}_s^{\hat{\bm{\lambda}}_s^{\tau_1}}\in\mathbb{R}^{n_{y}}_+,\quad \forall s\in\mathcal{S}\label{}\\
		&&\quad\quad=\bm{c}^{\intercal}\hat{\bm{x}}^{\tau_2}+\sum_{s\in\mathcal{S}}\pi_s\min_{\bm{y}_s\in\mathcal{Y}_s(\hat{\bm{x}}_2,\hat{\bm{\xi}}^{\tau_2}_s)}\bm{d}_s^{\intercal}\bm{y}_s \\
		&&\quad\quad=\bm{c}^{\intercal}\hat{\bm{x}}^{\tau_2}+\sum_{s\in\mathcal{S}}\pi_s\max_{\bm{\xi}_s\in\Xi_s(\hat{\bm{x}}_2)}\min_{\bm{y}_s\in\mathcal{Y}_s(\hat{\bm{x}}_2,\bm{\xi}_s)}\bm{d}_s^{\intercal}\bm{y}_s\\
		&&\quad\quad\geq UB.
	\end{eqnarray}
	The second equality follows from \eqref{www}. The second inequality holds due to the definition of UB in \eqref{def-ub}. And also, we always have $LB\leq UB$. Thus, $UB=LB$ holds.
	
	When the uniqueness condition is not satisfied, for $\mathfrak{B}_s^{\tau}$ with $s\in\mathcal{S}$, there must exist a component $j$ in the related reduced cost such that $(\bm{r}_{s,\mathfrak{N}_s^{\tau}})^j=0$.	The basic idea is to adjust $(\mathfrak{e}_s^{\hat{\bm{\lambda}}^{\tau}_s})^j$, so that $(\bm{r}_{s,\mathfrak{N}_s^{\tau}})^j>0$ and the uniqueness holds again, and then modify the generated $\mathcal{OU}_s$ with the adjusted $(\mathfrak{e}_s^{\hat{\bm{\lambda}}^{\tau}_s})^j$. In such a situation, the claim's conclusion still holds following the aformentioned proof process. For more details about the modification, please refer to Ref. \cite{zeng2022two}.	
\end{proof}

\begin{proof}[Proof of Theorem \ref{thm-2}]
For each scenario $s\in\mathcal{S}$, the number of bases of $\widetilde{\bm{H}}$ is finite, which is up to ${\binom{n_{\rm \xi}+m_{\rm \xi}}{m_{\rm \xi}}}$. Recall the conclusion in Claim \ref{cla-1}, that PC\&CG exhibits finite convergence to the global optimum ($\varepsilon=0$). With the existence of $|\mathcal{S}|$ scenarios, the iteration number before PC\&CG converges is bounded by ${\binom{n_{\rm \xi}+m_{\rm \xi}}{m_{\rm \xi}}}^{|\mathcal{S}|}$, and thus the iteration complexity of PC\&CG is $\mathcal{O}\left({\binom{n_{\rm \xi}+m_{\rm \xi}}{m_{\rm \xi}}}^{|\mathcal{S}|}\right)$.
\end{proof}

\bibliographystyle{IEEEtran}
\bibliography{ManuscriptBib}

\end{document}